\documentclass[reqno]{amsart}


\usepackage{amssymb} 
\usepackage{amsmath, amsthm} 

\usepackage[top=3cm, bottom=3cm, left=3cm, right=3cm]{geometry}

\usepackage{color}

\usepackage{enumitem} 
\usepackage{multirow} 
\usepackage{cleveref} 
\usepackage{comment}

\usepackage{tikz}
\usetikzlibrary{intersections}
\usetikzlibrary{decorations.markings}
\usetikzlibrary{arrows}
\usetikzlibrary{positioning}

\usepackage[T1]{fontenc}
\DeclareMathAlphabet{\mathpzc}{OT1}{pzc}{m}{it}
\usepackage{yfonts}
\usepackage[sans]{dsfont}
\usepackage{txfonts}
\usepackage[mathscr]{euscript}
\usepackage{bbm}



\newtheorem{thm}{Theorem}
\newtheorem{cor}[thm]{Corollary}
\newtheorem{lem}[thm]{Lemma}

\theoremstyle{definition}		
\newtheorem{defn}[thm]{Definition}	

\AtBeginEnvironment{lem}{\crefalias{thm}{lem}}



					
\newcommand{\grp}[1]{\mathfrak{#1}}	
\newcommand{\proj}[1]{\textgoth{#1}}	

\newcommand{\pnt}[1]{\mathpzc{#1}}	
\newcommand{\lne}[1]{\mathpzc{#1}}	


\newcommand{\nat}{\mathbb N}			


\newcommand{\cyc}[1]{\langle #1 \rangle}	
\DeclareMathOperator{\norm}{\lhd\,}	
\DeclareMathOperator{\xs}{\ltimes}		
\DeclareMathOperator{\Ker}{Ker}			
\DeclareMathOperator{\Aut}{Aut}			
\DeclareMathOperator{\Syl}{Syl}			
\newcommand{\id}{\mathpzc{id}}			

\DeclareMathOperator{\PG}{PG}			
\newcommand{\nearf}[1]{\mathds{#1}}		
\newcommand{\N}{\nearf{N}}			
\newcommand{\NP}{\proj{N}}			
\DeclareMathOperator{\Inc}{I}			


\newcommand{\ff}[1]{{\mathbb F}_{#1}}		
\newcommand{\ffs}[1]{{\mathbb F}_{#1}^\star}	
\newcommand{\ffx}[1]{\ff{#1}[X]}		
\newcommand{\gen}{\mathfrak{z}}			
\newcommand{\gn}{\mathfrak{g}}			

\DeclareMathOperator{\Tr}{Tr}
\DeclareMathOperator{\Nm}{N}


\def\nonsquare{\boxslash}		
\def\nm{\star}				
\DeclareMathOperator{\ord}{ord}		
\newcommand\ordp[1]{\ord_p(#1)}



\newcommand{\U}{\mathcal U}
\newcommand{\V}{\mathcal V}


\begin{document}
\title[Unitals in nearfield planes]{A classification of unitals in
nearfield planes with maximal automorphism group}

\author[R.S. Coulter]{Robert S. Coulter}
\author[A.M.W. Hui]{Alice Man Wa Hui}
\author[R. Weaver]{Randon Weaver}

\address[R.S. Coulter and R. Weaver]{Department of Mathematical Sciences, University of Delaware, Newark, DE, 19716, United States of America.}

\address[A.M.W. Hui]{School of Mathematics and Statistics, Clemson University, Clemson, SC, 29634, United States of America.}

\begin{abstract}
We classify the parabolic unitals in regular nearfield planes of odd order
$q^2$ whose linear collineation group has the maximal size of $q^3-q$.
We also establish a number of more general results concerning parabolic unitals
in regular nearfield planes under weaker assumptions.
\end{abstract}

\maketitle

\section{Introduction}

Set $p$ to be a prime and $q$ a power of $p$. We use
$\ff{q}$ to denote the finite field of order $q$ and $\ffs{q}$ to denote the
nonzero elements.
The squares and non-squares of $\ff{q}$ are denoted by $\square_q$ and
$\nonsquare_q$, respectively.

A {\it unital} of order $m$ is a $2$-$(m^3 + 1,m + 1,1)$ design.
In other words, it has $m^3+1$ points, $m+1$ points in each block, and the property that there is a unique block through any two points.
There are $m^2$ blocks on each point.
If a unital of order $m$ is embedded in a projective plane of order $m^2$,
then each line must intersect the unital in either $m+1$ points or $1$ point.
Lines of the former type are called {\it secants}, and lines of the latter type are called {\it tangents}.
There are $m^2$ secants and 1 tangent through each point of the unital, while
for any point not in the unital, there are $m^2-m$ secants and $m + 1$ tangents.
If a line of the plane is chosen as the line at infinity,
then the unital is called {\it hyperbolic} or {\it parabolic} depending on
whether the line at infinity intersects the unital in a secant or a tangent.

There are a number of known constructions of unitals embedded in
projective planes of square order, see the monograph \cite{bBE-2008-uipp}
of Barwick and Ebert.
An example is the {\it classical unital} given by the nondegenerate Hermitian
curve in the Desarguesian plane $\PG(2,q^2)$ over the finite field $\ff{q^2}$. One of its canonical forms is
\begin{equation}\label{classicalU}
\{(x,y,z)\in \PG(2,q^2)\,:\, x^{q+1} - y^q z - z^q y =0 \}.
\end{equation}
A generalization of the classical unital is the {\em orthogonal-Buekenhout-Metz
unital} (or orthogonal BM unital, for short) in a translation plane of dimension two over its kernel, which is obtained from
an elliptic cone under the Bruck-Bose representation of the plane, see
Buekenhout \cite{B-1976-eouif}, Theorem 3.
Orthogonal-BM unitals in $\PG(2,q^2)$ were studied and enumerated by Baker
and Ebert \cite{BE-1992-obmuo} and Ebert \cite{E-1992-obmuo}.

In this article we will be principally concerned with unitals in nearfield
planes, which are particularly special examples of translation planes.
(More information on nearfields and their planes is given below.)
Our work is preceded by several articles on this topic. In particular,
Wantz \cite{W-2008-uitrn} studied a class of orthogonal-BM unitals
in the regular nearfield planes of order $q^2$. Let $\NP$ denote a regular
nearfield plane of order $q^2$. Wantz showed that the set
\begin{equation}\label{WantzU}
\{(x,a x^2 + b x^{q+1} + t \epsilon ,1)
\,:\, x \in \ff{q^2}, t\in \ff{q} \}\cup \{(0,1,0)\},
\end{equation}
is a unital in $\NP$ if and only if $a,b \in \ff{q^2}$ and
$b^2-a^2 \in \square_q$.
Baker and Ebert enumerated the distinct orthogonal-BM unitals in regular
nearfield planes of order $q^2$ in \cite{BEW-2010-eoobu}.

In this article we are especially interested in the case where $q$ divides
the order of $\Aut(U)$ for a unital $U$ in $\NP$.
Our main result is a full classification of such unitals in the largest
possible automorphism group case. In fact, we prove
all are isomorphic to the unitals of Wantz, see \Cref{TheWantzTheorem}.
We also prove several more general results of independent interest while
progressing towards this goal, see Theorems \ref{centralcollineationlemma},
\ref{restrictionthm}, and \ref{restrictionthm1y0}, in particular.

The article is structured as follows.
We first recall some results on nearfields, nearfield planes, and their
collineation groups in Sections 2 and 3. In Section 4, we prove
\Cref{centralcollineationlemma}, which is a general statement concerning central collineations that fix a unital.
In Section 5 we study the structure of the collineation groups that fix a
unital in a nearfield plane, under the assumption that the order of the
automorphism group is divisible by $q$. Then, in Sections 6 and 7, we examine how
these collineation groups dictate the geometry of the corresponding unitals,
and in particular the absence of certain O'Nan configurations. In Section 9,
assuming that the collineation group of the unital has order $q(q^2-1)$, we
prove that the fixed unital is isomorphic to one the Wantz unitals using
information about the all-ones polynomials presented in Section 8.

\section{Nearfields} \label{nearfields}

The algebraic structure we shall be principally concerned with is a nearfield.
\begin{defn}
A finite set $S$ with two binary operations, $+$ (addition) and $\nm$
(multiplication), is called a (right) {\em planar nearfield} if
\begin{itemize}
\item $\cyc{S,+}$ is an abelian group with identity $0$,
\item $\cyc{S^\star,\nm}$ is a group, and
\item the right distributive law holds.
\end{itemize}
\end{defn}
As with finite fields, the additive structure of a planar nearfield is
necessarily elementary abelian, so that every nearfield can be viewed as a
finite field with some new ``twisted" multiplication.
Dickson \cite{D-1905-doaga,D-1905-ofa} introduced the concept of a nearfield
in 1905 and constructed two types of planar nearfields, now known as the
regular and irregular nearfields.
In 1935, Zassenhaus \cite{Z-1935-uef} showed that these were the only ones.
For a modern treatment of this classification result, see Grundh\" ofer and
Hering \cite{GH-2017-fnca}.

There are exactly 7 irregular nearfields, and we refer the interested
reader to Dembowski \cite{bD-1968-fg}, page 231, for the specifics.
On the other hand, there are infinitely many regular nearfields.
To construct any one of them, let $q$ be a prime power
and $n$ a natural number such that the prime divisors of $n$ also divide $q-1$.
If $q \equiv 3 \pmod 4$, then we also require $n \not \equiv 0 \bmod 4$.
Let $\gen$ be a primitive element of $\ff{q^n}$ and let $C=\cyc{\gen^d}$,
the group generated by $\gen^d$.
Note that the order of $C$ is $o(C)=(q^n-1)/d$ and has index $d$ in
$\ffs{q^n}$.
It is an easy exercise to prove that a set of
coset representatives for $C$ in the multiplicative group
$\ffs{q^n}$ is $\{\gen_i\,:\, 0\le i\le d-1\}$, where
$\gen_i=\gen^{(q^i-1)/(q-1)}$.
Let $F$ be the Frobenius group of automorphisms of $\ff{q^n}$ over
$\ff{q}$; that is, $F = \cyc{\phi}$ where
$\phi:\ff{q^n}\rightarrow\ff{q^n}$ satisfies $\phi(x)=x^q$. It is clear that
$o(F)=n$.
We define the function $\alpha:\ffs{q^n} \rightarrow F$ by
$\alpha(y)= (x\mapsto x^{q^i})$ if and only if $y \in \gen_i C$.
Define a new multiplication on $\ff{q^n}$ by
\begin{equation*}
x\nm y =
\begin{cases}
x^{\alpha(y)} y &\text{ if $y\in\ffs{q^n}$,}\\
0 &\text{ if $y=0$.}
\end{cases}
\end{equation*}
Set $\N(n,q) = \cyc{\ff{q^n}, +,\nm}$, where $+$ is the field addition.
Then Dickson showed $\N(n,q)$ is a {\em regular} planar nearfield of order
$q^n$, with a right distributive law.

The construction of the multiplication of a regular nearfield is essentially
the construction of a semidirect product $C\xs F$. It is
easily seen that this group is necessarily metacyclic. This was proved by
Zassenhaus as part of his classification of nearfields \cite{Z-1935-uef},

In this article, we will be particularly interested in the regular nearfields
$\nearf N(2,q)$. In this situation, nearfield multiplication simplifies to
\begin{equation} \label{Nmult}
x\nm y =
\begin{cases}
xy &\text{ if $y\in\square_{q^2}$,}\\
x^q y &\text{ if $y\in\nonsquare_{q^2}$.}
\end{cases}
\end{equation}
With $x^{-1}$ and $\frac{1}{x}$ denoting the regular nearfield multiplicative inverse and of finite field inverse $x$ respectively, we get $x^{-1} = \frac{1}{x}$ if $x \in \square_{q^2}$ and $x^{-1} = \frac{1}{x^q}$ if $x \in \nonsquare_{q^2}.$
The multiplicative group $\cyc{\N(2,q)^\star,\star}$ is isomorphic to
$C_{(q^2-1)/2}\xs C_2$, where $C_d$ denotes the cyclic group of
order $d$. Let the representation of $C_2$ be $\{1,-1\}$.
Under this representation, the elements of $C_{(q^2-1)/2}\times \{1\}$
correspond to the squares of $\ffs{q^2}$ while the elements of
$C_{(q^2-1)/2}\times \{-1\}$ correspond to the nonsquares. Since the only
subgroups of $C_{(q^2-1)/2}\xs C_2$ are $C_d\times\{1\}$ or
$C_d\xs C_2$ for a divisor $d$ of $(q^2-1)/2$, we see that any
subgroup $S$ will consist entirely of squares of $\ffs{q^2}$, or split evenly
between squares and nonsquares of $\ffs{q^2}$.
Let $\gn$ be a fixed primitive element of $\ff{q^2}$.
In the former case, $S=\cyc{\gn^{(q^2-1)/d}}$.
In the latter case, where $S$ consists half of squares and half of nonsquares,
$S=S_{d/2}\cup S_{d/2} h$, where $h\in\nonsquare_{q^2}$
and $S_{d/2}=\cyc{\gn^{2(q^2-1)/d}}$ is of order $d/2$.
Moreover, $h\star h=h^{q+1}\in S$, forcing $h^{q+1}\in S_{d/2}$.
We summarize this discussion with the following lemma.
\begin{lem} \label{subgroups}
Let $\N=\nearf N(2,q)$ and let $\gn$ be a primitive element of $\ff{q^2}$.
If $S<\cyc{\N^\star,\star}$, then either
\begin{enumerate}[label=(\roman*)]
\item $S=\cyc{\gn^{(q^2-1)/d}}$ for some divisor $d$ of $(q^2-1)/2$, or
\item $S=S_{d/2}\cup S_{d/2} h$, where $S_{d/2}=\cyc{\gn^{2(q^2-1)/d}}$ for some divisor
$d$ of $(q^2-1)/2$ and $h\in\nonsquare_{q^2}$ satisfies $h^{q+1}\in S_{d/2}$.
\end{enumerate}
\end{lem}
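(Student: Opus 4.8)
The plan is to exploit the well-understood structure of $\cyc{\N^\star,\star}\cong C_{(q^2-1)/2}\xs C_2$ established in the preceding discussion, and simply translate the abstract subgroup classification into the concrete realization inside $\ffs{q^2}$. First I would recall that by the classification of subgroups of a metacyclic group of this shape --- more elementarily, by the observation already made that the image of $S$ under the natural projection $\cyc{\N^\star,\star}\to C_2$ is either trivial or all of $C_2$ --- every subgroup $S$ either consists entirely of squares of $\ffs{q^2}$ or splits evenly, half squares and half nonsquares. These two cases are exactly (i) and (ii) of the statement, so the work is to pin down the form of $S$ in each.

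In case (i), $S$ is a subgroup of $C_{(q^2-1)/2}\times\{1\}$, which corresponds to the subgroup $\square_{q^2}$ of index $2$ in $\ffs{q^2}$. But on $\square_{q^2}$ the nearfield multiplication $\star$ agrees with ordinary field multiplication by \eqref{Nmult}, so $S$ is just a subgroup of the cyclic group $\square_{q^2}=\cyc{\gn^2}$ of order $(q^2-1)/2$. A subgroup of a cyclic group is cyclic and uniquely determined by its order, and the subgroup of $\cyc{\gn^2}$ of order $(q^2-1)/(2e)$ is $\cyc{(\gn^2)^{e}}=\cyc{\gn^{2e}}$; writing $d=2e$, which ranges over the even divisors... actually more simply, writing the order of $S$ as $(q^2-1)/d$ for a divisor $d$ of $(q^2-1)/2$, one gets $S=\cyc{\gn^{(q^2-1)/d}}$ directly, since $\gn^{(q^2-1)/d}$ has order $d\cdot\gcd$-adjusted$\ldots$ — concretely, $\gn^{(q^2-1)/d}$ generates the unique subgroup of $\ffs{q^2}$ of order $d$, and as $d\mid (q^2-1)/2$ this subgroup lies in $\square_{q^2}$, giving exactly form (i).

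In case (ii), let $S_0=S\cap\square_{q^2}$ be the "square part'', an index-$2$ subgroup of $S$; by case (i) applied to $S_0$ we have $S_0=\cyc{\gn^{(q^2-1)/(d/2)}}=\cyc{\gn^{2(q^2-1)/d}}$ where $d=|S|$, so $S_0$ has order $d/2$, matching the notation $S_{d/2}$. Pick any $h\in S\setminus S_0$; then $h\in\nonsquare_{q^2}$ and $S=S_0\cup S_0 h$ as a coset decomposition. The one genuine constraint to verify is closure: $h\star h$ must lie in $S$, and since $h\star h$ is a product of two nonsquares hence a square, it must lie in $S_0=S_{d/2}$; by \eqref{Nmult}, $h\star h = h^q h = h^{q+1}$, giving the stated condition $h^{q+1}\in S_{d/2}$. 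Conversely any such data yields a subgroup, though the lemma only asserts the forward direction. The main obstacle, such as it is, is purely bookkeeping: keeping the index/order conventions ($d$ versus $d/2$, and the exponents $(q^2-1)/d$ versus $2(q^2-1)/d$) consistent with the generator $\gn$ of the full group $\ffs{q^2}$ rather than of $\square_{q^2}$, and making sure the "splits evenly'' dichotomy is invoked cleanly rather than re-derived. No deep idea is needed beyond the structure theorem for subgroups of cyclic groups and the explicit formula \eqref{Nmult}.
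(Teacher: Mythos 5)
Your proposal is correct and follows essentially the same route as the paper, which proves this lemma via the discussion immediately preceding it: the identification of $\cyc{\N^\star,\star}$ with $C_{(q^2-1)/2}\xs C_2$, the dichotomy between subgroups lying entirely in $\square_{q^2}$ and those splitting evenly, and the closure computation $h\star h=h^{q+1}\in S_{d/2}$. The only cosmetic slip is the momentary misstatement of the order of $S$ in case (i) ("order $(q^2-1)/d$" rather than $d$), which you immediately self-correct.
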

Later we will find ourselves dealing with a surjective homomorphism mapping
one subgroup of $\cyc{\N(2,q)^\star,\star}$ to a cyclic subgroup of
$\cyc{\N(2,q)^\star,\star}$. Anticipating this, we prove the following.
\begin{lem} \label{CDrelation}
Let $G,H$ be subgroups of $\cyc{\N(2,q)^\star,\star}$ with $H$ cyclic, and let
$\sigma:G\rightarrow H$ be an onto homomorphism with $o(G)=r\, o(H)$ for some
integer $r$.
The following statements hold.
\begin{enumerate}[label=(\roman*)]
\item Suppose $G$ is cyclic. Then $H<G$ and there exists an integer
$1\le j < o(H)$ with $\gcd(j,o(H))=1$ for which $\sigma(x)=x^{r j}$ for all
$x\in G$.
\item Suppose $G$ is not cyclic, so that $G=S_l\cup S_l h$ with
$h^{q+1}\in S_l$.
Set $S=\sigma(S_l)$ and $K=\Ker(\sigma)\cap S_l$.
\begin{enumerate}[label=(\alph*)]
\item If $\sigma(S_l)=H$, then $r$ is even, $H<G$, $o(K)=r/2$, and there
exists an integer
$1\le j<o(H)$ with $\gcd(j,o(H))=1$ for which $\sigma(x)=x^{r j/2}$ for all
$x\in S_l$.

\item If $\sigma(S_l)\ne H$, then $o(H)$ is even, $o(K)=r$, and there
exists an integer
$1\le j<o(H)/2$ with $\gcd(j,o(H)/2)=1$ for which $\sigma(x)=x^{r j}$ for all
$x\in S_l$. Furthermore, $\sigma(h)\in H\setminus \sigma(S_l)$ and
$H=S_l\cup S_l\sigma(h)$.
\end{enumerate}
\end{enumerate}
\end{lem}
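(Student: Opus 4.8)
The plan is to reduce the whole statement to elementary facts about cyclic groups, using \Cref{subgroups} together with the observation made in the discussion preceding it: on the subgroup of squares of $\ffs{q^2}$ the nearfield product $\star$ agrees with ordinary field multiplication, so any subgroup of $\cyc{\N^\star,\star}$ consisting of squares is literally a cyclic subgroup of $\ffs{q^2}$ under multiplication and therefore contains a unique subgroup of each order dividing its own. Throughout write $n=o(H)$, so $o(G)=rn$ and $o(\Ker\sigma)=r$. Because a surjective homomorphism of cyclic groups carries a generator to a generator, in every case the real task is to pin down the exponent of a power map, which I would do by writing a fixed generator of the relevant copy of $H$ (or of $\sigma(S_l)$) explicitly as a $\star$-power of a generator of $G$ (or of $S_l$).

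For part (i): since $G$ is cyclic, \Cref{subgroups}(i) gives $G=\cyc{g}$ with $g=\gn^{(q^2-1)/(rn)}$, every power of which is a square, so $\star$ is field multiplication on $G$. Likewise $H$, being a cyclic subgroup of order $n$, equals $\cyc{\gn^{(q^2-1)/n}}$ by \Cref{subgroups}(i); since $n\mid rn$ this lies inside $\cyc{g}=G$, which proves $H<G$, and moreover $g^{r}=\gn^{(q^2-1)/n}$ generates $H$. Now $\sigma(g)$ generates $H$, so $\sigma(g)=(g^{r})^{j}=g^{rj}$ for some $j$ with $\gcd(j,n)=1$; normalising $j$ to $1\le j<n$ and applying $\sigma$ to an arbitrary power of $g$ gives $\sigma(x)=x^{rj}$ for all $x\in G$.

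For part (ii): put $l=o(G)/2$, so $o(G)=2l=rn$, and by \Cref{subgroups}(ii) the group $S_l=\cyc{g}$ with $g=\gn^{(q^2-1)/l}$ is cyclic of order $l$ consisting of squares, so $\star$ is again field multiplication on it. Since $G=S_l\cup S_lh$ and $\sigma$ is a homomorphism onto the abelian group $H$, one gets $H=\sigma(G)=S\cup S\,\sigma(h)$ with $S=\sigma(S_l)$, so $[H:S]\le2$. In case (a), $S=H$: then $\sigma|_{S_l}\colon S_l\to H$ is onto with kernel $K$, whence $l=o(K)n$, and with $2l=rn$ this forces $r$ even and $o(K)=l/n=r/2$; as in part (i), $H=\cyc{\gn^{(q^2-1)/n}}$, and since $n\mid l$ this is the order-$n$ subgroup $\cyc{g^{l/n}}=\cyc{g^{r/2}}$ of the cyclic group $S_l$, so $H<S_l<G$, and $\sigma(g)$ generating $H$ gives $\sigma(g)=(g^{r/2})^{j}=g^{rj/2}$ with $\gcd(j,n)=1$, $1\le j<n$, hence $\sigma(x)=x^{rj/2}$ on $S_l$. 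In case (b), $S\ne H$: then $[H:S]=2$, so $n$ is even and $o(S)=n/2$, and $\sigma|_{S_l}\colon S_l\to S$ onto with kernel $K$ gives $l=o(K)n/2$, i.e.\ $o(K)=2l/n=r$. Since $\sigma(G)=S\cup S\,\sigma(h)=H\ne S$, we must have $S\,\sigma(h)\ne S$, i.e.\ $\sigma(h)\in H\setminus S$, so $H=S\cup S\,\sigma(h)$. Finally $S$, being a cyclic subgroup of order $n/2$, is $\cyc{\gn^{2(q^2-1)/n}}$ by \Cref{subgroups}(i), hence the order-$(n/2)$ subgroup $\cyc{g^{2l/n}}=\cyc{g^{r}}$ of $S_l$, and $\sigma(g)$ generating $S$ gives $\sigma(g)=(g^{r})^{j}=g^{rj}$ with $\gcd(j,n/2)=1$, $1\le j<n/2$, hence $\sigma(x)=x^{rj}$ on $S_l$.

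The computations themselves are routine; the parts that need care are the appeals to \Cref{subgroups} pinning down the ambient cyclic groups (this is what lets one replace $\star$-powers by field powers on the groups in play, and forces $H$, respectively $S$, to be the unique subgroup of its order inside $G$, respectively $S_l$, from which the containments $H<G$ and $H<S_l$ fall out), together with the small order arithmetic that separates the three exponents $rj$, $rj/2$, $rj$ — equivalently, that identifies the chosen generator of the target with $g^{r}$, $g^{r/2}$, and $g^{r}$. I expect the only genuinely non-mechanical point to be the non-cyclic case: keeping in mind that the power-map formula there is asserted only on $S_l$ and locating the behaviour of $\sigma$ on the coset $S_lh$ through the single element $\sigma(h)$, for which the identity $H=S\cup S\,\sigma(h)$ is the right device.
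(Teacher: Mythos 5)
Your proposal is correct and follows essentially the same route as the paper: reduce everything to the structure of surjective homomorphisms between cyclic groups of squares (where $\star$ is field multiplication), use \Cref{subgroups} to force $H$ (resp.\ $S=\sigma(S_l)$) to be the unique subgroup of its order inside $G$ (resp.\ $S_l$), and handle the non-cyclic case via the coset identity $H=S\cup S\,\sigma(h)$. The only difference is presentational — you make the generator bookkeeping explicit where the paper simply appeals to the known form of homomorphisms of cyclic groups.
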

\begin{proof}
Since $G<\cyc{\N(2,q)^\star,\star}$, we know the form of $G$ from \Cref{subgroups}.

If $G$ is cyclic, then both $G$ and $H$ are necessarily subgroups of the
(cyclic) multiplicative group of $\ffs{q^2}$. Since $o(H)|o(G)$, it is
immediate that $H<G$. In this case, $\sigma$ is now a group homomorphism
on the cyclic group $G$ with a kernel of order $r$. It is immediate that
$\sigma$ has the form claimed.

Now suppose $G$ is not cyclic, so that $G=S_l\cup S_l h$ where $o(G)=2l$.
Note that $K$ is the kernel
of the restriction of $\sigma$ to $S_l$ and $S\approx S_l/K$. This also means
$S$ is a subgroup of $S_l$, as both are cyclic subgroups of the groups of
non-zero squares of $\ff{q^2}$.

Suppose first that $S=H$. Then $H\approx S_l/K$, from which we find
$o(H)|l$ and so $H<S_l<G$. Since $2l=r\, o(H)$, we also have $2|r$ and
$o(K)=r/2$.
Again, we appeal to the known structure of homomorphisms on cyclic groups
to get that the restriction of $\sigma$ to $S_l$ satisfies
$\sigma(x)=x^{r j/2}$ for some integer $j$ relatively prime to $o(H)$.

Now suppose $S\ne H$.
If $\sigma(h)\in S$, then $\sigma(G)=S\cup S\sigma(h) = S\cup S =S\ne H$,
contradicting $\sigma$ being onto.
Thus, $\sigma(h)\notin S$, and $S\sigma(h)$ is a nontrivial coset of $S$.
It follows that $H=S\cup S\sigma(h)$, from which $[H:S]=2$. This time we have
$o(K)=r$, and the restriction of $\sigma$ to $S_l$ satisfies
$\sigma(x)=x^{r j}$ for some integer $1\le j< o(H)/2$ with
$\gcd(j,o(H)/2)=1$.
\end{proof}

\section{Nearfield planes and their collineations}

There are a number of ways in which to construct a projective plane from a
nearfield. We follow the construction and notation of Wantz \cite{W-2008-uitrn},
whose results motivated this work.
\begin{lem}
Let $\nearf{F}=\cyc{\nearf{F},+,\nm}$ be any nearfield.
We define a point set $P$ and line set $L$ by
\begin{align*}
P &= \{(x,y,1) \,:\, x, y\in\nearf{F}\}
\cup \{(1,y,0) \,:\, y \in \nearf{F}\} \cup \{ (0,1,0) \},\\
L &= \{[s,1,t] \,:\, s,t \in\nearf{F}\}
\cup \{ [1,0,t] \, : \, t \in \nearf{F} \} \cup \{[0,0,1] \},
\end{align*}
respectively. If incidence $\Inc$ is defined by
\begin{equation*}
(x,y,z)\, \Inc\, [s,u,t] \text{ if and only if }
x \nm s + y \nm u + z \nm t = 0,
\end{equation*}
then $\NP(\nearf{F})=(P,L,\Inc)$ is a projective plane of order $o(\nearf F)$.
\end{lem}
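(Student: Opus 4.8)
The plan is to verify directly that $\NP(\nearf{F})$ satisfies the defining axioms of a projective plane of order $n:=o(\nearf F)$, together with the line-size condition: (P1) any two distinct points lie on a unique common line; (P2) any two distinct lines meet in a unique common point; (P3) there exist four points, no three of which are collinear; and every line is incident with exactly $n+1$ points. The bookkeeping is immediate: among the three families of points we count $n^2 + n + 1$ in total, and likewise $|L| = n^2+n+1$, matching the point and line counts of a projective plane of order $n$.

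The next step is to run through the three families of lines and count incidences, using only that $0$ is absorbing and that $1$ is the two-sided identity of $\cyc{\nearf F^\star,\nm}$. For $[0,0,1]$ the incidence relation reduces to $z=0$, so $[0,0,1]$ is incident with precisely the $n+1$ points $\{(1,y,0):y\in\nearf F\}\cup\{(0,1,0)\}$ and is the line at infinity. For $[1,0,t]$ incidence forces $x\nm 1 + z\nm t = 0$, which selects $(0,1,0)$ together with the $n$ affine points $(-t,y,1)$, $y\in\nearf F$. For $[s,1,t]$ incidence reads $x\nm s + y + z\nm t = 0$, which selects the point at infinity $(1,-s,0)$ together with, for each $x\in\nearf F$, the unique affine point $(x,\,-(x\nm s)-t,\,1)$. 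In every case the line carries exactly $n+1$ points.

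For (P1) I would split into cases according to the types of the two points. Two affine points with the same first coordinate $x_0$ lie on $[1,0,-x_0]$ and on no other line; an affine point together with a point at infinity, or two points at infinity, are each disposed of by a one-line direct computation that isolates the unique joining line. The only case needing the algebra is two affine points $(x_1,y_1,1)$, $(x_2,y_2,1)$ with $x_1\neq x_2$: a joining line must have the form $[s,1,t]$, and subtracting the two incidence equations to eliminate $t$ yields $x_1\nm s - x_2\nm s = y_2 - y_1$, i.e.\ $(x_1-x_2)\nm s = y_2 - y_1$ by the right distributive law. Since $x_1 - x_2\in\nearf F^\star$ and $\cyc{\nearf F^\star,\nm}$ is a group, this has a unique solution $s$, after which $t = -(x_1\nm s) - y_1$ is forced, so the joining line is unique.

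For (P2) the pairs in which at least one line is a vertical line $[1,0,t]$ or the line at infinity $[0,0,1]$ are settled immediately from the incidence description above, each meeting in a single point ($(0,1,0)$, some $(1,-s,0)$, or an obvious affine point). The substantive case is two lines $[s_1,1,t_1]$, $[s_2,1,t_2]$: if $s_1 = s_2$ (hence $t_1\neq t_2$) they share no affine point but both pass through $(1,-s_1,0)$, their unique common point; if $s_1\neq s_2$ they share no point at infinity, so any common point is affine, and eliminating $y$ we must show the map $\phi\colon x\mapsto x\nm s_2 - x\nm s_1$ attains the value $t_1 - t_2$ exactly once. Here the one-sidedness of the distributive law forbids factoring $x$ out on the left; this is the main obstacle, and it is resolved by finiteness: if $\phi(x)=\phi(x')$ then $(x-x')\nm s_2 = (x-x')\nm s_1$ by right distributivity, which forces $x=x'$ by cancellation in $\cyc{\nearf F^\star,\nm}$ unless $s_1 = s_2$, so $\phi$ is injective and hence bijective on the finite set $\nearf F$. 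Finally, (P3) holds with the four points $(1,0,0)$, $(0,1,0)$, $(0,0,1)$, $(1,1,1)$, no three of which are collinear (check the four triples against the incidence rule); combining (P1)--(P3) with the line-size count shows $\NP(\nearf F)$ is a projective plane of order $n = o(\nearf F)$.
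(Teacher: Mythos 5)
Your proof is correct. The paper itself gives no proof of this lemma --- it simply remarks that the construction is standard --- so there is nothing to compare against beyond noting that your direct verification of the axioms is complete and sound. You correctly isolate the two places where the algebra actually matters: right distributivity (which gives $(x_1-x_2)\nm s = x_1\nm s - x_2\nm s$, so the join of two affine points with distinct first coordinates reduces to solving $a\nm s=b$ with $a\ne 0$ in the multiplicative group), and the injectivity-plus-finiteness argument for the meet of $[s_1,1,t_1]$ and $[s_2,1,t_2]$ with $s_1\ne s_2$, which is exactly where the one-sidedness of the distributive law would otherwise block a direct solution and where planarity (finiteness) is genuinely used. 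The only glossed point is the trivial one that in $a\nm s=b$ the case $b=0$ forces $s=0$, which still gives uniqueness; this does not affect the argument.
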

This construction is standard (and not limited to nearfields). Note that in
this construction the {\em line at infinity} is considered to be the line
$$[0,0,1]=\{(1,y,0) \,:\, y \in \nearf{F}\} \cup \{ (0,1,0) \},$$
with all points not lying on $[0,0,1]$ viewed as {\em affine points}.
Moreover, the line at infinity is a translation line, see Dembowski
\cite{bD-1968-fg}, pages 127--131, and Section 5.1 for further
information on this.

With regard to the collineation groups of nearfield planes, we have the
following results from Andr\'e \cite{A-1955-peuf}.
\begin{lem}[{\cite{A-1955-peuf}}, Satz 12] \label{andresatz12}
Let $\nearf{F}=\cyc{\nearf{F},+,\nm}$ be any regular nearfield of order
$q=p^e\ne 9$, and let $\NP(\nearf F)$ be the corresponding nearfield plane.
The full collineation group $\Gamma$ of $\NP(\nearf F))$ is given by
\begin{equation*}
\Gamma = \grp{T}\, \hat{\grp{R}}_0 \, \widetilde{\grp{U}}_0\, \grp{B}_0,
\end{equation*}
where
\begin{align*}
\grp{T} &= \{ (x,y,1)\mapsto (x+u,y+v,1) \, : \, u,v\in \nearf F\},\\
\hat{\grp{R}}_0 &= \{ (x,y,1)\mapsto (x \nm c,y\nm d,1)
\, : \, c,d \in \nearf{F}^\star\},\\
\widetilde{\grp{U}}_0 &= \{ (x,y,1)\mapsto (\tau(x) ,\tau(y),1) \, : \,
\tau \in \Aut(\nearf F)\}, (replace\ \sigma \ by \ \tau) \text{ and}\\
\grp{B}_0 &= \{ \id, (x,y,1)\mapsto (y,x,1)\}.
\end{align*}
Furthermore, we have $o(\grp T)=q^2$,
$o(\hat{\grp{R}}_0)= (q-1)^2$, and
$o(\widetilde{\grp{U}}_0)=e$, so that $o(\Gamma)=2 e q^2(q-1)^2$.
\end{lem}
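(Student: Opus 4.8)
The plan is to establish the two inclusions separately. For the easy direction I would first verify, directly from the incidence rule $x\nm s+y\nm u+z\nm t=0$, that each of $\grp{T}$, $\hat{\grp{R}}_0$, $\widetilde{\grp{U}}_0$, and $\grp{B}_0$ consists of collineations: the translations are immediate, the right multiplications $(x,y,1)\mapsto(x\nm c,y\nm d,1)$ use only associativity of $\nm$ and the right distributive law (together with the induced action on line coordinates), the maps induced by $\tau\in\Aut(\nearf{F})$ preserve both $+$ and $\nm$, and the swap $\beta\colon(x,y,1)\mapsto(y,x,1)$ is checked against the explicit normal forms of the line coordinates. Next I would record the group structure: $\hat{\grp{R}}_0\cong\nearf{F}^\star\times\nearf{F}^\star$ has order $(q-1)^2$; conjugation by $\tau$ sends multiplication by $(c,d)$ to multiplication by $(\tau(c),\tau(d))$, so $\widetilde{\grp{U}}_0$ normalises $\hat{\grp{R}}_0$ and has order $|\Aut(\nearf{F})|=e$; the swap $\grp{B}_0$ normalises $\hat{\grp{R}}_0\,\widetilde{\grp{U}}_0$ (it interchanges the two multipliers and centralises $\widetilde{\grp{U}}_0$); and $\grp{T}$ consists of the identity and the elations with axis $[0,0,1]$, a set stable under conjugation by every collineation, so $\grp{T}$ is normal in $\Gamma$. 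Evaluating a hypothetical identity element of the product at $(0,0,1)$, $(1,0,1)$, and $(0,1,1)$ forces $c=d=1$ and then $\tau=\id$, so the four factors overlap only as forced, whence $o(\grp{T}\,\hat{\grp{R}}_0\,\widetilde{\grp{U}}_0\,\grp{B}_0)=q^2(q-1)^2 e\cdot 2$ and this group is contained in $\Gamma$. (Here one uses that $\nearf{F}$ is a \emph{proper} nearfield, which holds for the regular nearfields $\nearf{N}(n,q)$ with $n\ge 2$; for a field the stated order is wrong and the plane is Desarguesian.)

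For the reverse inclusion the key point is that $[0,0,1]$ is a translation line and, the plane being non-Desarguesian, is the \emph{unique} translation line, hence invariant under all of $\Gamma$. Then $\grp{T}$ is normal in $\Gamma$ and acts regularly on the affine points, so $\Gamma=\grp{T}\rtimes\Gamma_o$, where $\Gamma_o$ is the stabiliser of the origin $o=(0,0,1)$; by the standard theory of translation planes $\Gamma_o$ acts faithfully as a group of $\ff{p}$-semilinear maps of the underlying $2e$-dimensional space permuting the spread. It then remains to identify $\Gamma_o$. I would first show that $\Gamma_o$ stabilises the pair $\{(1,0,0),(0,1,0)\}$ of infinite points of the two coordinate axes --- these being characterised intrinsically, for instance as the only points of $[0,0,1]$ that are centres of a homology group of order $q-1$ --- so that $\grp{B}_0$ accounts for a possible interchange of the axes and we reduce to the index-$\le 2$ subgroup $\Gamma_o^{+}$ fixing $o$, $(1,0,0)$, and $(0,1,0)$, i.e.\ fixing the coordinate triangle. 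Every element of $\Gamma_o^{+}$ is then an autotopism of the nearfield plane, and the crux is the computation --- from Dickson's explicit description of $\nm$ via the coset function $\alpha$ and the Frobenius group $F$ --- that each such autotopism has the form $(x,y,1)\mapsto(\tau(x)\nm c,\tau(y)\nm d,1)$ with $\tau\in\Aut(\nearf{F})$ and $c,d\in\nearf{F}^\star$, together with the fact that $\Aut(\nearf{F})$ is cyclic of order $e$. This gives $\Gamma_o^{+}=\hat{\grp{R}}_0\,\widetilde{\grp{U}}_0$, hence $\Gamma=\grp{T}\,\hat{\grp{R}}_0\,\widetilde{\grp{U}}_0\,\grp{B}_0$ of the stated order.

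The hard part will be exactly this autotopism computation together with the determination of $\Aut(\nearf{F})$: ruling out a third fixed direction for $\Gamma_o$, and showing $\Aut(\nearf{F})$ is cyclic of order $e$, genuinely requires Dickson's construction rather than just the quasifield axioms, and it is precisely here that the hypothesis $q\ne 9$ enters --- the regular nearfield of order $9$ is exceptional (its multiplicative group is the quaternion group $Q_8$), and the associated plane, the Hall plane of order $9$, admits extra collineations, so the stated order fails for $q=9$. For the details of this final analysis I would follow André \cite{A-1955-peuf}.
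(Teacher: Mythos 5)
The paper does not prove this statement at all: it is imported verbatim as André's Satz~12 from \cite{A-1955-peuf}, so there is no in-paper argument to compare yours against. Your outline is a reasonable reconstruction of how such a proof goes, and I see no step that is actually wrong: the containment $\grp{T}\,\hat{\grp{R}}_0\,\widetilde{\grp{U}}_0\,\grp{B}_0\le\Gamma$ with the stated order is routine as you describe; the reverse inclusion via invariance of the unique translation line, the splitting $\Gamma=\grp{T}\rtimes\Gamma_o$, the reduction to the stabiliser of the coordinate triangle, and the autotopism computation is exactly the classical route; and your remarks about why properness of the nearfield and the exclusion $q\ne 9$ matter (the order-$9$ regular nearfield plane is the Hall plane, with a strictly larger group) are correct. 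The caveat is that your proposal is not self-contained precisely where self-containment would be needed: the claim that $(1,0,0)$ and $(0,1,0)$ are the \emph{only} points of $[0,0,1]$ that are centres of homology groups of order $q-1$, the determination of all autotopisms from Dickson's explicit multiplication, and the fact that $\Aut(\nearf{F})$ is cyclic of order $e$ are all asserted and then deferred to André. Since the paper itself defers the entire lemma to the same source, this is not a defect relative to the paper, but as a standalone proof it remains a sketch whose hardest steps are outsourced.
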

From this result, Andr\'e immediately obtains the following very useful
description of $\Gamma$.
\begin{lem}[{\cite{A-1955-peuf}}, Satz 13] \label{andresatz13}
Any collineation of the nearfield plane $\NP(\nearf F)$ is of the form
$$(x,y,1)\mapsto (\tau(x) \nm c+u,\tau(y) \nm d+v,1),$$
or
$$(x,y,1)\mapsto (\tau(y) \nm  d+v,\tau(x) \nm  c+u,1),$$
where $\tau \in \Aut(\nearf F)$, and $c,d,u,v \in \nearf{F}$, $c,d\ne 0$.
\end{lem}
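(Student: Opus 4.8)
The plan is to obtain this as an immediate corollary of \Cref{andresatz12}, working under its standing hypotheses on $\nearf{F}$. That result presents the full collineation group as the product $\Gamma = \grp{T}\,\hat{\grp{R}}_0\,\widetilde{\grp{U}}_0\,\grp{B}_0$, so any collineation $\gamma$ of $\NP(\nearf{F})$ factors as a composition $\gamma = t \circ r \circ s \circ b$ with $t \in \grp{T}$, $r \in \hat{\grp{R}}_0$, $s \in \widetilde{\grp{U}}_0$, and $b \in \grp{B}_0$. Since each of these four subgroups is given explicitly by its action on affine points $(x,y,1)$, it suffices to expand this composition on a generic affine point and record the resulting closed form, splitting according to the two elements of $\grp{B}_0 = \{\id, (x,y,1)\mapsto(y,x,1)\}$.

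In the case $b = \id$, applying $s$ sends $(x,y,1)$ to $(\tau(x),\tau(y),1)$ for the automorphism $\tau \in \Aut(\nearf{F})$ attached to the $\widetilde{\grp{U}}_0$-component; applying $r$ then gives $(\tau(x)\nm c,\tau(y)\nm d,1)$ for some $c,d \in \nearf{F}^\star$; and finally $t$ gives $(\tau(x)\nm c + u,\tau(y)\nm d + v,1)$ for some $u,v \in \nearf{F}$. This is precisely the first displayed form. In the case $b \colon (x,y,1)\mapsto (y,x,1)$, the identical computation performed after the coordinate swap produces $(\tau(y)\nm c + u,\tau(x)\nm d + v,1)$; relabelling the (freely ranging) parameters $c \leftrightarrow d$ and $u \leftrightarrow v$ puts this in the second displayed form. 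As $t$ ranges over $\grp{T}$, $r$ over $\hat{\grp{R}}_0$, $s$ over $\widetilde{\grp{U}}_0$, and $b$ over $\grp{B}_0$, the parameters $c,d$ range over all of $\nearf{F}^\star$, $u,v$ over all of $\nearf{F}$, and $\tau$ over all of $\Aut(\nearf{F})$, so the two forms exhaust $\Gamma$.

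There is essentially no obstacle here beyond careful bookkeeping: the substantive content is entirely contained in \Cref{andresatz12}, and what remains is to compose the four explicitly described factor maps in the stated right-to-left order and check that nothing collapses or over-specializes. The only mild point requiring attention is that the nearfield multiplication $\nm$ appearing in $\hat{\grp{R}}_0$ is exactly the one that survives into the final formula, since the translations in $\grp{T}$ act additively and the swap in $\grp{B}_0$ merely permutes coordinates; and that each composite, although a priori only a map on affine points, visibly sends $(x,y,1)$ to a point with last coordinate $1$ and hence is consistent with a collineation fixing the line $[0,0,1]$, as all elements of $\Gamma$ do.
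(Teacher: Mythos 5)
Your derivation is correct and is exactly the route the paper intends: the text states that \Cref{andresatz13} follows ``immediately'' from the factorization $\Gamma = \grp{T}\,\hat{\grp{R}}_0\,\widetilde{\grp{U}}_0\,\grp{B}_0$ of \Cref{andresatz12}, and your expansion of the composite on an affine point (with the harmless relabelling of parameters in the swapped case) is precisely that bookkeeping. No gap to report.
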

A {\em linear collineation} of $\NP(\nearf F)$ is any collineation not
induced by an automorphism of $\nearf F$. \Cref{andresatz12} thus tells us
the group of all linear collineations is precisely
$\grp{T}\, \hat{\grp{R}}_0 \, \grp{B}_0$. Following the notation of
\cite{W-2008-uitrn}, we define the following mappings to describe all linear
collineations: for all $c,d,u,v,s,t \in\nearf F$ with $c,d\ne 0$,
\begin{itemize}
\item Define $\phi(c,d,u,v)$ on $P$ and $L$ by
\begin{align*}
(x, y, 1) &\mapsto (x \nm c + u, y \nm d + v, 1)\\
(1, y, 0) &\mapsto (1, c^{-1} \nm y \nm d, 0)\\
(0, 1, 0) &\mapsto (0, 1, 0) \\
[s, 1, t] &\mapsto [c^{-1} \nm s \nm d, 1, t \nm d - u \nm c^{-1} \nm s \nm d - v] \\
[1,0, t] &\mapsto [1, 0, t \nm c - u]\\
[0, 0, 1] &\mapsto [0, 0, 1]
\end{align*}
\item Define $\gamma(c,d,u,v)$ on $P$ and $L$ by
\begin{align*}
(x, y, 1) &\mapsto (y \nm c + u, x \nm d + v, 1)\\
(1, y, 0) &\mapsto (1, (y \nm d)^{-1} \nm c, 0), y\ne 0 \\
(1, 0, 0) &\mapsto (0, 1, 0) \\
(0, 1, 0) &\mapsto (1, 0, 0) \\
[s, 1, t] &\mapsto [c^{-1} \nm s^{-1} \nm d, 1, t \nm s^{-1} \nm d - u \nm c^{-1} \nm s^{-1} \nm d - v], s\ne 0 \\
[0,1,t] &\mapsto [1,0, t \nm c - u]\\
[1,0, t] &\mapsto [0, 1, t \nm d - v]\\
[0, 0, 1] &\mapsto [0, 0, 1]
\end{align*}
\end{itemize}
Clearly, we have
\begin{align*}
\grp{T} &= \{\phi(1,1,u,v) \,:\, u,v\in\nearf F\}, \text{ and}\\
\hat{\grp{R}}_0 &= \{\phi(c,d,0,0)\,:\, c,d\in\nearf F^\star\}.
\end{align*}
We note that $\grp{T}$ is the translation group of the plane; it is easily
observed that it consists of all $q^2$ elations of the plane with
axis $[0,0,1]$. Thus $\grp{T}\approx\cyc{\ff{q},+}\times\cyc{\ff{q},+}$.
The following result will prove useful.
\begin{lem}\label{philemma}
Let $\nearf F$ be a nearfield of order $q=p^e$ for some prime $p$.
The following statements hold.
\begin{enumerate}[label=(\roman*)]
\item The group of all elations of $\NP(\nearf F)$ with center $(0,1,0)$ and
axis $[0,0,1]$ is given by
$$H_0 = \{\phi(1,1,0,v)\,:\, v\in\nearf F\}.$$
Moreover, any subgroup of $H_0$ is of the form
$\{\phi(1,1,0,v)\,:\, v\in V\}$ for some subspace $V$ of
$\ff{q}$, when viewed as a vector space over $\ff{p}$.
\item The group of all elations of $\NP(\nearf F)$ with center $(1,y,0)$,
$y\ne 0$, and axis $[0,0,1]$ is given by
$$H_y = \{\phi(1,1,u,u\nm y)\,:\, u\in\nearf F\}.$$
\item Any linear collineation of $\NP(\nearf F)$ of order a power of $p$ must
lie in $\grp{T}$.
\end{enumerate}
\end{lem}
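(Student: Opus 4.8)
The approach throughout is to exploit the explicit descriptions of the linear collineations recalled above, together with the key fact (noted just before the lemma) that $\grp T$ is precisely the group of \emph{all} elations of $\NP(\nearf F)$ with axis $[0,0,1]$.

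\emph{Part (i).} Any elation of $\NP(\nearf F)$ with axis $[0,0,1]$ lies in $\grp T$, hence equals some $\phi(1,1,u,v)$, and it has center $(0,1,0)$ exactly when it fixes every line through $(0,1,0)$. A quick incidence check shows the lines through $(0,1,0)$ are $[0,0,1]$ together with the lines $[1,0,t]$, $t\in\nearf F$. From the displayed action of $\phi(c,d,u,v)$ we see $\phi(1,1,u,v)$ fixes $[0,0,1]$ and sends $[1,0,t]\mapsto[1,0,t-u]$, so it fixes all of these iff $u=0$; this gives $H_0=\{\phi(1,1,0,v):v\in\nearf F\}$. For the ``moreover'', $v\mapsto\phi(1,1,0,v)$ is a group isomorphism from $\cyc{\nearf F,+}$ onto $H_0$ (composing two such maps adds the parameters $v$). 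Since the additive group of $\nearf F$ is elementary abelian of exponent $p$ --- a vector space over $\ff p$ --- its subgroups are exactly its $\ff p$-subspaces, and pulling this back through the isomorphism yields the claimed description of the subgroups of $H_0$.

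\emph{Part (ii).} First, $H_y$ is a subgroup of $\grp T$: composing $\phi(1,1,u_1,u_1\nm y)$ and $\phi(1,1,u_2,u_2\nm y)$ gives $\phi\bigl(1,1,u_1+u_2,(u_1+u_2)\nm y\bigr)$, using the right distributive law $u_1\nm y+u_2\nm y=(u_1+u_2)\nm y$. As in Part~(i), an elation with axis $[0,0,1]$ is some $\phi(1,1,u,v)$, and it has center $(1,y,0)$ iff it fixes every line through $(1,y,0)$; these are $[0,0,1]$ together with the lines $[-y,1,t]$, $t\in\nearf F$. Since $\phi(1,1,u,v)$ fixes $[0,0,1]$ and, by the displayed action,
\begin{equation*}
[-y,1,t]\ \mapsto\ [-y,\,1,\,t-u\nm(-y)-v],
\end{equation*}
the center is $(1,y,0)$ precisely when $v=-\bigl(u\nm(-y)\bigr)=u\nm y$; here the identity $u\nm(-y)=-(u\nm y)$ holds because $-1$ is central in $\cyc{\nearf F^\star,\nm}$ (for $\nearf F=\N(2,q)$ this is immediate, since $-1$ is a square in $\ff{q^2}$, so $y$ and $-y$ receive the same twist). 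Hence the elations with axis $[0,0,1]$ and center $(1,y,0)$ are exactly $\{\phi(1,1,u,u\nm y):u\in\nearf F\}=H_y$.

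\emph{Part (iii).} Let $\Lambda=\grp T\,\hat{\grp R}_0\,\grp B_0$ be the group of all linear collineations, of order $2q^2(q-1)^2$ by \Cref{andresatz12}; every element of $\Lambda$ fixes $[0,0,1]$. Because $\grp T$ is the set of all elations with axis $[0,0,1]$, conjugation by any element of $\Lambda$ carries $\grp T$ to itself, so $\grp T\trianglelefteq\Lambda$. As $q^2=p^{2e}$ while $2(q-1)^2$ is prime to $p$, $\grp T$ is a Sylow $p$-subgroup of $\Lambda$; being also normal it is the unique one, and therefore contains every element of $\Lambda$ of order a power of $p$.

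The step needing the most care is the center computation in Part~(ii): everything reduces to the nearfield identity $u\nm(-y)=-(u\nm y)$, and one must be careful that $[-y,1,t]$ is written in the normalized line coordinates of the construction, so that ``fixed line'' really means equal third coordinate. (The Sylow count in Part~(iii) uses that $p$ is odd, so that the factor $2$ contributed by $\grp B_0$ is prime to $p$, which is consistent with the odd-order setting of the paper.)
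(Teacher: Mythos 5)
Your proof is correct and takes essentially the same approach as the paper's, merely filling in the explicit computations the paper leaves as "easily verified" (the paper deduces completeness of $H_0$ and $H_y$ from the bound of at most $q$ elations per center--axis pair, while you deduce it from $\grp T$ being all elations with axis $[0,0,1]$ and a pencil-fixing computation --- an equivalent route). Your caveat that part~(iii) requires $p$ odd for $\grp T$ to be a Sylow $p$-subgroup is well taken, but the paper's own proof carries the same implicit assumption, consistent with the odd-order setting of the article.
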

\begin{proof}
It is easily verified that $H_y$ has the claimed center for all $y$. Since
there are at most $q$ elations with any given center and axis, it is clear
we have all such elations in each case.  All that
needs to be established to complete the proof of (i) or (ii) is the form of
$V$ for a subgroup of $H_0$, but this follows at
once from the observation that composition of elements of $H_0$ acts in the
same manner as field addition in the 4th coordinate of the $\phi$ involved.

Finally, it is easy to prove that
$\grp{T}$ is normal in the group of linear collineations
$\grp{T}\, \hat{\grp{R}}_0 \, \grp{B}_0$. Since $\grp{T}$ is a Sylow-$p$
subgroup and normal, it is unique. Thus, all elements of order a power of $p$
must lie in $\grp{T}$, proving (iii).
\end{proof}
For further information on nearfields and their planes, see Dembowski
\cite{bD-1968-fg}, pages 229--232.

\section{Central collineations that fix a unital}

Before working with unitals in nearfield planes, we first prove a general
result concerning central collineations that fix a unital in any finite
projective plane.
\begin{thm}  \label{centralcollineationlemma}
Let $U$ be a unital of order $m$ embedded in a projective plane
$\proj{P}$ of order $m^2$.
Suppose $G=\Aut(U)$ contains a non-trivial central collineation $\phi$ with
axis $\lne{l}$.
The following statements hold.
\begin{enumerate}[label=(\roman*)]
\item $\phi$ is an elation if and only if $\lne{l}$ is a tangent line of $U$.
\item $\phi$ is an homology if and only if $\lne{l}$ is a secant line of $U$.
\end{enumerate}
\end{thm}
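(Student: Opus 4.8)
The plan is to analyze how $\phi$ acts on the intersection $U \cap \lne{l}$ and on the points of $U$ not on $\lne{l}$, using the two standard facts about a central collineation: it fixes the axis $\lne{l}$ pointwise, and it fixes the center $\pnt{c}$ (together with every line through $\pnt{c}$). Since $\phi \in \Aut(U)$, it permutes the points of $U$; combining this with the fixed-point structure forces a dichotomy on whether $\lne{l}$ is tangent or secant. First I would observe that because $\lne{l}$ is a line of $\proj{P}$ meeting the embedded unital, $|U \cap \lne{l}| \in \{1, m+1\}$, so $\lne{l}$ is either a tangent or a secant, and every point of $U \cap \lne{l}$ is fixed by $\phi$. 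So the content is to pin down the relationship between ``elation vs.\ homology'' and ``tangent vs.\ secant,'' and by the exhaustive nature of the four combinations it suffices to prove the two ``only if'' directions (or the two ``if'' directions); I would prove both forward implications and note the converses follow since the cases partition all possibilities.

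The key step is a counting argument on orbits of $\phi$ among the points of $U$. Here is the structure I would use. Let $\pnt{c}$ be the center of $\phi$. Consider any point $\pnt{x} \in U \setminus \lne{l}$. The line $\pnt{x}\pnt{c}$ is fixed by $\phi$ (it passes through the center), it meets $\lne{l}$ in exactly one point, say $\pnt{z}$, and $\phi$ maps $\pnt{x}$ to another point of $U$ on the line $\pnt{x}\pnt{c}$. If $\phi$ is an elation, then $\pnt{c} \in \lne{l}$, so $\pnt{z} = \pnt{c}$ and the line $\pnt{x}\pnt{c} = \pnt{x}\pnt{z}$ meets $\lne{l}$ only at $\pnt{c}$; the nontrivial orbit of $\pnt{x}$ under $\cyc{\phi}$ lies entirely on this line and entirely inside $U \setminus \{\pnt{c}\}$ unless $\pnt{c} \in U$. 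If $\lne{l}$ were a secant, then $|U \cap \lne{l}| = m+1$ and one counts the secant lines through $\pnt{c}$: each such secant meets $U$ in $m+1$ points all fixed by $\phi$ if $\pnt{c} \notin U$ — but a central collineation fixing $m+1 \geq 3$ points on a line through the center, none of them the center, together with the center itself, already forces $\phi$ to fix that line pointwise, hence the line equals the axis, contradiction unless every secant through $\pnt{c}$ equals $\lne{l}$, which is absurd. This gives the elation $\Rightarrow$ tangent direction. Symmetrically, if $\phi$ is a homology, then $\pnt{c} \notin \lne{l}$ and $\pnt{c}$ is a fixed point not on the axis; I would show $\lne{l}$ cannot be a tangent by a similar count: if $|U \cap \lne{l}| = 1$, say $U \cap \lne{l} = \{\pnt{p}\}$, then the tangent line to $U$ at $\pnt{p}$ must be $\phi$-invariant and $\phi$ fixes $\pnt{p}$; chasing the $m^2$ secants at $\pnt{p}$ and how $\phi$ permutes them (it fixes the one through $\pnt{c}$, namely $\pnt{p}\pnt{c}$, which is a secant since it meets $U$ in $\pnt{p}$ and in $\phi$-orbits of size $>1$) leads to a divisibility contradiction: the nontrivial orbits of $\phi$ on $U \setminus (\lne{l} \cup \{\pnt{c}\})$ all have the same size $d = \ord(\phi) > 1$, while the total count $|U| - |U\cap\lne{l}| - [\pnt{c}\in U]$ must be divisible by $d$, and a parity/divisibility check against $m^3 + 1 - 1 = m^3$ or $m^3$ adjusted by $\pnt{c}$ fails.

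The cleanest route, which I would actually write up, avoids delicate case splits by using the well-known structure theorem: a nontrivial central collineation $\phi$ of order $d$ partitions the points it does not fix into orbits of size $d$, all of which lie on lines through the center, and $\phi$ fixes exactly the points of $\lne{l} \cup \{\pnt{c}\}$ (when $\pnt{c}\notin\lne{l}$, i.e.\ homology) or exactly the points of $\lne{l}$ (when $\pnt{c}\in\lne{l}$, i.e.\ elation) — here using that $\phi \neq \id$ and a plane has no ``extra'' fixed points for a central collineation. Since $\phi \in \Aut(U)$, the set of $\phi$-fixed points of $U$ is $U \cap (\text{fixed locus of }\phi)$. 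In the elation case this is $U \cap \lne{l}$, and $|U| - |U \cap \lne{l}| = m^3 + 1 - |U\cap\lne{l}|$ must be divisible by $d$; testing $|U\cap\lne{l}| \in \{1, m+1\}$ against $m^3+1$: we get $m^3$ (tangent) versus $m^3 - m = m(m-1)(m+1)$ (secant), and one needs a further constraint to eliminate the secant case. That extra constraint is geometric: if $\lne{l}$ is a secant containing an elation axis, the $m+1$ fixed points of $U$ on $\lne{l}$ are exactly $U \cap \lne{l}$, but there is a point $\pnt{x} \in U$ off $\lne{l}$ whose orbit $\{\pnt{x}, \phi(\pnt{x}), \dots\}$ lies on the line $\pnt{x}\pnt{c}$ meeting $\lne{l}$ at $\pnt{c}$; if $\pnt{c} \notin U$ this orbit together with the one point $\pnt{c}$-slot shows $|\{\pnt{x}\pnt{c}\} \cap U| = d$ must divide... — rather, the right observation is that $\pnt{c}\in\lne{l}$ a secant means $\pnt{c} \in U$ or not, and in either case one counts secants through $\pnt{c}$: through a point of $U$ there are $m^2$ secants, through a non-point of $U$ there are $m^2-m$ secants, and each is $\phi$-invariant (passes through center) hence has its $U$-points permuted by $\phi$ with the fixed ones lying on $\lne{l}$, forcing each such secant to meet $\lne{l}$ in at least... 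I expect the main obstacle to be exactly this: cleanly ruling out ``elation with secant axis'' and ``homology with tangent axis'' without an awkward enumeration. The fix I would commit to is to show directly that an elation with secant axis fixes a point of $U$ off its axis (namely, if $\pnt{c}\notin U$ then one of the $m+1$ tangents through $\pnt{c}$... ) — more robustly, that an elation cannot fix any point off its axis and a homology fixes exactly one point off its axis (the center), then argue: a homology whose axis is tangent would have all $U$-fixed points equal to $\{\pnt{p}\} \cup (\{\pnt{c}\}\cap U)$, i.e.\ one or two points, and $\phi$ acts semiregularly on the rest, so $d \mid m^3 + 1 - 1$ or $d \mid m^3+1-2$; meanwhile the homology also satisfies the known relation that its center-to-axis incidence forces $d \mid m^2 - 1$ from its action on a secant through $\pnt{c}$ not equal to a line of $U$ — balancing these divisibility conditions against the tangent/secant point counts yields the contradiction. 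I would present the argument through fixed-point counting plus one orbit-on-a-line-through-the-center observation, flagging the divisibility bookkeeping as the only real work.
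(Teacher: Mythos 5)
Your overall strategy---reduce to two implications via the tangent/secant and elation/homology dichotomies, then derive coprime divisibility conditions on $o(\phi)$ from orbit counts---is the right toolkit, and it is broadly the paper's. But the proposal never lands a complete argument, and it contains two genuine gaps. First, the claim that a secant through the center $\pnt{c}$ of an elation ``meets $U$ in $m+1$ points all fixed by $\phi$'' is false: a line through the center is fixed only setwise, and a nontrivial central collineation fixes no point outside its axis and its center. So the contradiction you extract from it for ``elation with secant axis'' evaporates. Second, the divisibility bookkeeping you ultimately commit to does not close, precisely in the subcases you leave unchecked. Counting $\phi$-orbits on all of $U$ gives, for a homology with tangent axis, $o(\phi)\mid m^3$ or $o(\phi)\mid m^3-1$ (according as $\pnt{c}\notin U$ or $\pnt{c}\in U$), to be played against $o(\phi)\mid m^2-1$; but $\gcd(m^3-1,m^2-1)=m-1$, so when the center lies in $U$ there is no contradiction. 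Likewise, an elation with secant axis gives $o(\phi)\mid m^3-m$ against $o(\phi)\mid m^2$, and $\gcd(m^3-m,m^2)=m$: again no contradiction. The global fixed-point count over $U$ is simply too coarse.

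The missing idea is localization, which is what the paper does: choose a point of $\lne{l}\cap U$ and work inside the pencil of $m^2+1$ lines through it, using that the unique tangent of $U$ at a fixed unital point is a $\phi$-fixed line. In the secant case one picks $\pnt{Q}\in\lne{l}\cap U$ with $\pnt{Q}$ neither the tangency point nor the center; its tangent is a fixed line distinct from $\lne{l}$, so the pencil at $\pnt{Q}$ has exactly two fixed lines and $m^2-1$ non-fixed ones in orbits of size $o(\phi)$, whence $o(\phi)\mid m^2-1$ and $\phi$ is a homology (a nontrivial elation has order dividing $m^2$, and $\gcd(m^2,m^2-1)=1$). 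The tangent case is handled symmetrically, yielding $o(\phi)\mid m^2$. The point is that within a single pencil the two candidate divisors are the consecutive integers $m^2$ and $m^2-1$, which genuinely separate elations from homologies; your counts $m^3$, $m^3-1$, $m^3-m$ do not. To salvage your route you would still need one such local count (e.g.\ on the unital points of a single fixed secant through the center), so the pencil argument is not avoidable bookkeeping but the core of the proof.
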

\begin{proof}
Choose a point $\pnt{P}\in\lne{l}\cap U$ and let $\lne{b}$ be the unique
tangent line through $\pnt{P}$.
As $\phi\in G$, we must have $\lne{b}^\phi=\lne{b}$.
Recall that, under the action of $\phi$, the non-fixed lines through $\pnt{P}$
must be partitioned into orbits of size $o(\phi)$.

There are two cases.
\begin{itemize}[label=$\boxdot$]
\item $\lne{l}$ is a tangent line.\\
In this case, we have $\lne{l}=\lne{b}$.
Now, if $\pnt{P}$ is the center of $\phi$, then $\phi$ is an elation.
Otherwise, there are $m^2$ lines through $\pnt{P}$ that are not fixed by
$\phi$. Thus, $o(\phi)|m^2$, which again forces $\phi$ to be an elation.

\item $\lne{l}$ is a secant line.\\
In this case, we have $\lne{l}\ne\lne{b}$.
Since $|\lne{l}\cap U|=m+1\ge 3$, we may choose a point
$\pnt{Q}\in\lne{l}\cap U$ that is neither $\pnt{P}$ nor the center of $\phi$.
Let $\lne{b}'$ be the unique tangent line through $\pnt{Q}$.
We still have $\lne{l}\ne\lne{b}'$, so that there are
$m^2-1$ lines through $\pnt{Q}$ that are not fixed by
$\phi$. Thus, $o(\phi)|(m^2-1)$, which forces $\phi$ to be an homology.
\end{itemize}
\end{proof}
This simple result appears to be new. For example, it is not found in the
primary book on unitals by Barwick and Ebert \cite{bBE-2008-uipp}, nor is it
found in the seminal texts of Dembowski \cite{bD-1968-fg} or Hughes and Piper
\cite{bHP-1973-pp}.

Recall that the group of translations of a plane $\proj{P}$ is the group of
elations fixing a specified line at infinity.
We get the immediate corollary.
\begin{cor} \label{cor1}
Let $\proj{P}$ be a projective plane of order $m^2$ with a non-trivial translation group
$\grp{T}$.
If $\proj{P}$ admits a unital $U$ of order $m$ with
$\Aut(U)\cap\grp{T}$ non-trivial, then $U$ is a parabolic unital.
\end{cor}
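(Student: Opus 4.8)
The plan is to read this off directly from \Cref{centralcollineationlemma}. First I would fix the line at infinity $\ell_\infty$ of $\proj{P}$ with respect to which $\grp{T}$ is the translation group, so that by definition every element of $\grp{T}$ is an elation with axis $\ell_\infty$. Since $\Aut(U)\cap\grp{T}$ is non-trivial, choose a non-trivial $\phi\in\Aut(U)\cap\grp{T}$. Being a non-trivial translation, $\phi$ is in particular a non-trivial central collineation (an elation) with axis $\ell_\infty$, and since $\phi\in\Aut(U)$ it fixes $U$.

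Now \Cref{centralcollineationlemma} applies to $\phi$ with $G=\Aut(U)$ and $\lne{l}=\ell_\infty$. As $\phi$ is an elation, part (i) of that theorem forces its axis $\ell_\infty$ to be a tangent line of $U$; part (ii) is not needed, as $\phi$ is not an homology. But $U$ being a parabolic unital means exactly that the chosen line at infinity meets $U$ in a single point, i.e. is a tangent. Hence $U$ is parabolic, which completes the argument. There is essentially no obstacle here: the only step requiring attention is unwinding the two definitions involved (translation $=$ elation with axis $\ell_\infty$, and parabolic $\iff$ line at infinity is a tangent), after which the conclusion is immediate from \Cref{centralcollineationlemma}(i).
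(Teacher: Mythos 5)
Your proof is correct and is essentially identical to the paper's: both take a non-trivial element of $\Aut(U)\cap\grp{T}$, observe it is an elation with axis the line at infinity, and invoke \Cref{centralcollineationlemma}(i) to conclude that line is a tangent, i.e.\ $U$ is parabolic.
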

\begin{proof}
Under the hypothesis, $\Aut(U)$ contains an elation with axis $[\infty]$.
\Cref{centralcollineationlemma} now implies $[\infty]$ is a tangent line, so
that $U$ is parabolic.
\end{proof}
For nearfields, we can say a little more.
We use $\ordp{n}$ to denote the $p$-order of
$n$; that is, the largest power of $p$ that divides $n$.
\begin{lem} \label{onlyparabolic}
Let $\nearf F$ be a nearfield of order $q^2=p^{2e}$.
Let $U$ be a unital of order $q$ embedded in $\NP(\nearf F)$ and set
$G=\Aut(U)$.
If $\ordp{o(G)}>\ordp{e}$, then $U$ is a parabolic unital and $G\cap\grp{T}$ is
a non-trivial subgroup of $H_y$ of order $p^n$ where $n=\ordp{o(G)}-\ordp{e}$.
\end{lem}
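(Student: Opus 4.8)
The plan is to first locate an element of $p$-power order inside $G$, then use \Cref{philemma}(iii) to force it into $\grp{T}$, and finally use \Cref{centralcollineationlemma} and \Cref{cor1} to conclude that $U$ is parabolic and to pin down the structure of $G\cap\grp{T}$. The starting point is the bound $\ordp{o(G)}>\ordp{e}$. Write $o(G)=p^a m$ with $p\nmid m$, so $a=\ordp{o(G)}$. By Cauchy's theorem (or by taking a Sylow $p$-subgroup of $G$), $G$ contains an element $\psi$ of order $p$, and more generally a Sylow $p$-subgroup $\grp{S}\le G$ of order $p^a$. Each element of $\grp{S}$ is a collineation of $\NP(\nearf F)$; by \Cref{andresatz13} it is either a linear collineation or is induced by a non-trivial field automorphism composed with a linear part. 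The automorphism group $\Aut(\nearf F)$ has order $e$ (it is the Frobenius group, as recorded in \Cref{andresatz12} via $o(\widetilde{\grp U}_0)=e$), so the ``automorphism part'' of a collineation lives in a group whose order contributes at most $\ordp{e}$ to the $p$-part.

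The first key step is therefore: \emph{every element of $\grp{S}$ is a linear collineation of order a power of $p$.} To see this, consider the natural homomorphism from the full collineation group $\Gamma$ onto $\Aut(\nearf F)$ sending a collineation to its $\tau$-component (well-defined by \Cref{andresatz12}, since $\Gamma=\grp T\,\hat{\grp R}_0\,\widetilde{\grp U}_0\,\grp B_0$ with kernel the linear collineations $\grp T\,\hat{\grp R}_0\,\grp B_0$). Restricting this map to $\grp S$, the image has order dividing $\gcd(p^a, e)=p^{\ordp e}$, so the kernel $\grp S\cap(\text{linear collineations})$ has order at least $p^{a-\ordp e}=p^{\,\ordp{o(G)}-\ordp e}>1$. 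Call this subgroup $\grp S_0$; it is a non-trivial $p$-group consisting of linear collineations. By \Cref{philemma}(iii), every element of $\grp S_0$ lies in $\grp T$, so $\grp S_0\le G\cap\grp T$, and in particular $G\cap\grp T$ is non-trivial.

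Now $G\cap\grp T$ is non-trivial and consists of elations with axis $[0,0,1]=[\infty]$ (recall $\grp T$ is exactly the group of elations with axis $[\infty]$). By \Cref{cor1}, $U$ is a parabolic unital, so $[\infty]$ is a tangent line of $U$. It remains to identify $G\cap\grp T$. Since $U$ is parabolic, $[\infty]$ meets $U$ in a single point; that point must be either $(0,1,0)$ or some $(1,y,0)$. An elation of $G\cap\grp T$ fixes $U$ and fixes its axis $[\infty]$ pointwise, hence fixes the unique point $[\infty]\cap U$; but an elation with axis $[\infty]$ fixes a point of $[\infty]$ other than its center only if it is the identity, so the center of every non-trivial element of $G\cap\grp T$ is exactly $[\infty]\cap U$. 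Thus all of $G\cap\grp T$ has a single common center, namely this point, which is either $(0,1,0)$ or some $(1,y,0)$; in the first case $G\cap\grp T\le H_0$ and in the second $G\cap\grp T\le H_y$ — in either case a subgroup of $H_y$ in the notation of \Cref{philemma}. By \Cref{philemma}(i), any subgroup of $H_0$ (and analogously $H_y$) corresponds to an $\ff p$-subspace of $\ff q$, so its order is a power of $p$, say $p^n$. Finally I must check $n=\ordp{o(G)}-\ordp e$: the inequality $n\ge\ordp{o(G)}-\ordp e$ follows since $\grp S_0\le G\cap\grp T$ and $|\grp S_0|\ge p^{\,\ordp{o(G)}-\ordp e}$; for the reverse, note $G\cap\grp T$ is a normal $p$-subgroup of $G$ (being $G\cap(\text{the normal Sylow }p\text{-subgroup }\grp T\text{ of the linear collineations})$, intersected appropriately), hence contained in every Sylow $p$-subgroup of $G$, but also the image of $\grp S$ in $\Aut(\nearf F)$ accounts for the remaining $p$-part, giving $|G\cap\grp T|=p^{a-\ordp e}$; assembling these, $n=\ordp{o(G)}-\ordp e$.

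The main obstacle I anticipate is the last bookkeeping step — proving the exact equality $n=\ordp{o(G)}-\ordp e$ rather than merely the inequality $n\ge \ordp{o(G)}-\ordp e$. This requires being careful that \emph{all} of the $p$-part of $o(G)$ not coming from $G\cap\grp T$ is genuinely absorbed by the field-automorphism quotient, i.e.\ that a Sylow $p$-subgroup of $G$ cannot contain $p$-elements that are linear collineations outside $\grp T$ (handled by \Cref{philemma}(iii)) \emph{and} that the map to $\Aut(\nearf F)$ restricted to a Sylow $p$-subgroup of $G$ is surjective onto a group of the full possible $p$-order $p^{\ordp e}$. The cleanest way to finish is to observe that $G\cap\grp T=\Ker(\rho|_{\grp S})$ for a Sylow $p$-subgroup $\grp S$ of $G$ where $\rho:\Gamma\to\Aut(\nearf F)$ is the automorphism-component map, so $|G\cap\grp T|=|\grp S|/|\rho(\grp S)|=p^a/|\rho(\grp S)|$, and then argue $|\rho(\grp S)|=p^{\ordp e}$ by exhibiting, or citing the existence of, a $p$-element of $G$ whose automorphism component has order $p^{\ordp e}$ — which is where one would need to use the full strength of the hypothesis together with the structure of $\Gamma$; if such an element need not exist in general, the statement would have to be read as asserting $n=\ordp{o(G)}-\ordp e$ under the additional force of whatever makes $\rho(\grp S)$ as large as possible, and I would adjust the argument accordingly.
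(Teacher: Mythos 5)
Your opening step---pushing a Sylow $p$-subgroup of $G$ through the quotient onto $\Aut(\nearf F)$ to extract a non-trivial $p$-group of linear collineations, then invoking \Cref{philemma}(iii) to land it inside $\grp T$---is sound, and is in fact a cleaner version of the counting the paper does; the appeal to \Cref{cor1} for parabolicity matches the paper exactly. The genuine gap is in your identification of the common center. You assert that ``an elation with axis $[\infty]$ fixes a point of $[\infty]$ other than its center only if it is the identity.'' This is false: an elation fixes its axis \emph{pointwise} by definition, and the center of an elation lies on its axis, so every non-trivial translation already fixes every point of $[0,0,1]$. The fact that $\phi\in G\cap\grp T$ fixes the point $[0,0,1]\cap U$ therefore gives no information about its center, and as written your argument would ``prove'' that every non-trivial translation is the identity. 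What you have not shown is precisely the content of the last claim of the lemma: that all non-trivial elements of $G\cap\grp T$ share one center, which is what places $G\cap\grp T$ inside a single group $H_y$ rather than spread across several of them.

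The correct argument works with affine tangent lines rather than points at infinity. Let $\pnt P$ be the unique point of $U$ on $[0,0,1]$ and suppose some non-trivial $\phi\in G\cap\grp T$ has center $\pnt C\ne\pnt P$. Then $\pnt C\notin U$, so $\pnt C$ lies on $q+1$ tangent lines of $U$, of which $q$ are affine. Since $\phi$ is an elation with center $\pnt C$, it fixes every line through $\pnt C$, in particular each of these affine tangents; since $\phi$ also fixes $U$, it must fix the unique tangency point on each of them. Those tangency points are affine, i.e.\ off the axis $[0,0,1]$, and a non-trivial elation fixes no point off its axis --- a contradiction. Hence every non-trivial element of $G\cap\grp T$ has center $\pnt P$, and \Cref{philemma} then gives $G\cap\grp T<H_y$. (Your closing concern about the exact equality $n=\ordp{o(G)}-\ordp{e}$, as opposed to the inequality $o(G\cap\grp T)\ge p^{n}$ that your kernel argument delivers, is legitimate; the paper's own proof settles the order by essentially the same count you perform and is no more detailed on this point, so the substantive defect to repair is the center argument above.)
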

\begin{proof}
Set $H=G\cap\grp{T}$ and consider $S\in\Syl_p(G)$. Note that
$o(S)=p^o$ where $o=\ordp{o(G)}$.
By \Cref{andresatz12}, the elements of $S$ can only
come from $H$ or $G\cap\hat{\grp{U}}_0$ when $p|e$. In the latter case, there are
at most $p^{\ordp{e}}$ such elements of order a power of $p$.
Since $\ordp{o(G)}>\ordp{e}$ by hypothesis, $H$ must be non-trivial; indeed,
it's order must be $p^n$.
Thus, $G$ contains non-trivial elations with axis $[0,0,1]$.
\Cref{cor1} now tells us that $U$ is parabolic.

Let $\pnt{P}$ be the unique point on $[0,0,1]$ contained in $U$.
If $\phi\in H$ has center $\pnt{C}\ne \pnt{P}$,
then it fixes each affine tangent of $\pnt{C}$ and hence each tangent point of these tangents.
But this is
impossible as $\phi$ can fix no points off its axis. Thus, the center of $\phi$
is $\pnt{P}$. \Cref{philemma} now forces $H<H_y$ for some $y\in\N$.
\end{proof}

\section{Restrictions on the linear collineations of unitals in $\NP$}

For convenience, we use $\N=\N(2,q)$ and $\NP=\NP(\N)$ throughout the
remainder of this article. Note that $\N$ and $\NP$ have order $q^2$, with
$q=p^e$ for some odd prime $p$.
Let $\Tr$ and $\Nm$ be the trace and norm functions, respectively, of
$\ff{q^2}$ onto $\ff{q}$.
That is, $\Tr(x)=x^q+x$ and $\Nm(x)=x^{q+1}$.
We recall that, for all $x,y\in\ff{q^2}$, we have
$\Tr(x^q)=\Tr(x)$, $\Tr(x+y)=\Tr(x)+\Tr(y)$, $\Nm(x^q)=\Nm(x)$, and
$\Nm(xy)=\Nm(x)\Nm(y)$.
By the properties of the norm, we also have $\Nm(x \nm y)=\Nm(x)\Nm(y)$ for all
$x,y\in\ff{q^2}$, so that the norm function acts multiplicatively with
respect to both field and nearfield multiplication.
It is well understood that the trace is $q$-to-1 from $\ff{q^2}$ onto
$\ff{q}$, while the norm is $(q+1)$-to-1 from $\ffs{q^2}$ onto $\ffs{q}$.

We now fix a primitive element $\gen$ of $\ff{q^2}$, so that
$\ffs{q^2}=\cyc{\gen}$.
Set $\beta=\Nm(\gen)=\gen^{q+1}$ and $\epsilon=\gen^{(q+1)/2}$.
By construction, $\beta\in\nonsquare_q$, $\epsilon^q=-\epsilon$, and
$\epsilon^2=\beta$.
We also have $\ff{q^2}=\ff{q}(\epsilon)$, so that any $a\in\ff{q^2}$ can
be written as $a=a_1+a_2\epsilon$ with $a_i\in\ff{q}$.
Furthermore, $\Tr(a)=2a_1$ and $\Nm(a)=a_1^2-a_2^2\beta$.

We begin with the following consequences of the previous sections.
\begin{lem} \label{reductionlem}
Let $U$ be a unital of order $q$ embedded in $\NP$.
Set $G=\Aut(U)\cap\grp{T}\, \hat{\grp{R}}_0 \, \grp{B}_0$ and
$H=G\cap\grp{T}$, so that $G$ is the group of linear collineations fixing $U$
and $H$ is the group of translations fixing $U$.
Fix $n=\ordp{o(G)}$ and suppose $n>0$.
The following statements hold.
\begin{enumerate}[label=(\roman*)]
\item $U$ is a parabolic unital.
\item $H<H_y$ for some $y\in\N$ with $o(H)=p^n$.
\item If $U$ contains $(0,1,0)$ or $(1,0,0)$,
then $\NP$ admits a unital $U'$ isomorphic to $U$
containing $(0,1,0)$, and
$\Aut(U')\cap\grp{T}=\{\phi(1,1,0,w)\,:\, w \in W\}$ for some subspace
$W$ of $\ff{q^2}$ with $|W|=p^n$.
\item If $U$ does not contain $(0,1,0)$ or $(1,0,0)$,
then $\NP$ admits a unital $U'$ isomorphic to $U$
containing $(1,1,0)$, so
that $\Aut(U')\cap\grp{T}=\{\phi(1,1,w,w)\,:\, w \in W\}$ for some subspace
$W$ of $\ff{q^2}$ with $|W|=p^n$.
\item $n\le e$.
\end{enumerate}
\end{lem}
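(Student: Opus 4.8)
The plan is to bound $o(H)$ from above by $p^e$ and thereby deduce $n \le e$ from part (ii), which already gives $o(H) = p^n$. By parts (iii) and (iv), after replacing $U$ by an isomorphic unital $U'$ we may assume the group of translations fixing $U'$ is parametrized by a subspace $W$ of $\ff{q^2}$ (viewed as a $2e$-dimensional $\ff{p}$-vector space) with $|W| = p^n$, and $H = \{\phi(1,1,0,w) : w \in W\}$ (the case $(0,1,0) \in U'$) or $H = \{\phi(1,1,w,w) : w \in W\}$ (the case $(1,1,0) \in U'$). In either case $H$ is a group of elations of $\NP$ with axis $[0,0,1]$ and a common center $\pnt{P}$, namely the unique point of $U'$ on the line at infinity (this is exactly the structure extracted in the proof of \Cref{onlyparabolic}).

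The key step is a counting argument in the affine plane using the secant structure of the parabolic unital $U'$. Fix an affine point $\pnt{A} \in U'$, and let $\lne{b}$ be the unique tangent of $U'$ through $\pnt{A}$. Since $H$ consists of elations with center $\pnt{P} \in [\infty]$ and $H$ fixes $U'$, the orbit $\pnt{A}^H$ has size $o(H) = p^n$ and lies entirely on the affine line $\pnt{A}\pnt{P}$; because $\pnt{A}\pnt{P} \ne \lne{b}$ (the tangent at $\pnt{A}$ passes through $\pnt{P}$ only if $\pnt{A} = \pnt{P}$, impossible for an affine point), the line $\pnt{A}\pnt{P}$ is a secant, so it meets $U'$ in exactly $q+1$ points. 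Hence the entire $H$-orbit $\pnt{A}^H$, of size $p^n$, is contained in a set of size $q+1 = p^e + 1$. Now one must rule out $p^n = p^e + 1$: an orbit under a $p$-group acting on points of a line has $p$-power size, and $p^e + 1$ is not a power of $p$ for $p$ odd (it is $\equiv 2 \bmod p$ when $e \ge 1$), so in fact $p^n \le p^e$, i.e. $n \le e$.

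The main obstacle is making the ``orbit lies on a secant'' step airtight: one must verify that every element of $H$ genuinely fixes the line at infinity and has its center at $\pnt{P}$, so that $\pnt{A}$, $\pnt{A}^\phi$, and $\pnt{P}$ are always collinear — this is where the explicit form of $\phi(1,1,0,w)$ or $\phi(1,1,w,w)$ from Section 3 and \Cref{philemma}(i)–(ii) is used, together with the fact (from \Cref{onlyparabolic}, or re-derived here) that an elation in $H$ cannot have center off $\pnt{P}$ without fixing points off its axis. Once the orbit is confined to the $q+1$ affine points of a single secant through $\pnt{A}$, the arithmetic conclusion $p^n \mid$ (something at most $q+1$) forcing $p^n \le q = p^e$ is routine, and then $n \le e$ is immediate. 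I would also remark that equality $n = e$ can occur, which is precisely the ``maximal automorphism group'' regime the paper targets, so the bound is sharp.
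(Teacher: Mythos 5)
Your argument is essentially the paper's own proof of (v): both confine an $H$-orbit to the unital points of a secant through the unique unital point $\pnt{P}$ on $[0,0,1]$ and conclude $o(H)=p^n\le q$. Two trivial remarks: since elations with axis $[0,0,1]$ send affine points to affine points, the orbit already sits inside the $q$ affine unital points of the secant, so the detour through $q+1$ is unnecessary; and $p^e+1\equiv 1\pmod p$, not $2$, though this does not affect your conclusion.
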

\begin{proof}
By construction, $H\in\Syl_p(G)$, so that $o(H)=p^n$.
Thus, $G$ contains non-trivial central collineations with
axis $[0,0,1]$.
\Cref{cor1} now tells us that $U$ is parabolic.
It therefore contains either $(0,1,0)$ or $(1,y,0)$ for some $y\in\N$.
In either case, \Cref{onlyparabolic} proves (ii).

The orbit structure of the action of $\Aut(\NP)$ on $[0,0,1]$ is
$\{(0,1,0),(1,0,0)\}$ and $[0,0,1]\setminus \{(0,1,0),(1,0,0)\}$.
Thus, depending on which orbit the unital $U$ intersects, we can always
find an isomorphic unital $U'$ satisfying the conditions of (iii) or (iv).
The remainder of (iii) and most of (iv) now follows at once from
\Cref{philemma}.
The only part that of (iv) that needs
verification is that $W$ is a subspace of $\ff{q^2}$, but this follows in a similar manner
to proving the form of $H_0$ in \Cref{philemma}.

To prove (v), 
choose any line $\lne{l}\ne[0,0,1]$ through the unique unital point $\pnt{P}$ on $[0,0,1]$.
Since $[0,0,1]$ is
the unique tangent line through $\pnt{P}$, $\lne{l}$ must be a secant line.
Consider the action of $H$ on the $q+1$ unital points of $\lne{l}$. 
This action
fixes the point $\pnt{P}$ and must partition the remaining $q$ points into
orbits of size $o(H)$, which proves $o(H)\le q$.
By (ii), $o(H)=p^n$. Since $q=p^e$, we have $n \leq e$.
\end{proof}
We now want to focus on the maximal case described in \Cref{reductionlem}(v), and will now obtain substantial restrictions
on the group of linear collineations fixing the unital in that case. We start
with the unitals of \Cref{reductionlem} (iii).

\begin{thm} \label{restrictionthm}
Let $U'$ be a unital of order $q$ of $\NP$ that contains $(0,1,0)$ or $(1,0,0)$.
If $q|o(\Aut(U')\cap\grp{T}\hat{\grp{R}}_0\grp{B}_0)$,
then $U'$ is isomorphic to a unital $U$ for which the following statements hold.
\begin{enumerate}[label=(\roman*)]
\item $U$ is a parabolic unital of $\NP$ containing $(0,1,0)$.

\item The group $H=\Aut(U)\cap\grp{T}$ satisfies
\begin{equation*}
H=\{\phi(1,1,0,w)\,:\, w \in W\},
\end{equation*}
where $W$ is a subspace of $\ff{q^2}$, when viewed as a vector
space over $\ff{p}$, with $o(W)=q$.
Additionally, $W$ must have a scalar field $\ff{q_0}$ for some
subfield of $\ff{q}$.

\item $U\cap[1,0,0] = \{(0,w,1) \,:\, w\in W\}$. Furthermore, this is precisely
the set of tangency points of the $q$ affine tangents of $(1,0,0)$.

\item The group $G=\Aut(U)\cap\grp{T}\hat{\grp{R}}_0\grp{B}_0$ satisfies
$G=HG_1$, where $G_1=G\cap\hat{\grp R}_0$. Furthermore,
$q|o(G)$ and $o(G)|q(q^2-1)$,

\item $H\norm G$.

\item $o(G_1)=o(G)/q$.

\item The set $C=\{c\in\N\,:\, \phi(c,d,0,0)\in G_1\}$ has
cardinality $o(G_1)$ and is a subgroup of $\cyc{\N^\star,\star}$.

\item The set $D=\{d\in\N\,:\, \phi(c,d,0,0)\in G_1\}$
is a subgroup of $\ffs{q_0}$. In particular, $D < \ffs{q}$.

\item The mapping $\delta:C\rightarrow D$ defined by $\delta(c)=d$ if
$\phi(c,d,0,0)\in G_1$ is an onto $r$-to-1 homomorphism, where
$r=o(C)/o(D)$. Furthermore, $r|(q+1)$.

\end{enumerate}
\end{thm}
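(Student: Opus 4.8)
The plan is to prove the statements (i)--(ix) in roughly the order listed, each one building on the prior ones, and treat (ii), (iv), and (vii)--(ix) as the substantive content; the rest is bookkeeping. By \Cref{reductionlem}, after replacing $U'$ with an isomorphic copy $U$ we may assume $U$ is a parabolic unital containing $(0,1,0)$ with $\Aut(U)\cap\grp{T}=\{\phi(1,1,0,w):w\in W\}$ for a subspace $W$ of $\ff{q^2}$ over $\ff p$ with $o(W)=q$; this is (i) and the first half of (ii). For (iii), I would argue that the $q$ affine tangents of $(1,0,0)$ are precisely the lines $[1,0,t]$ and that $\phi(1,1,0,w)$ permutes them transitively (fixing the tangent point $(0,1,0)$'s partner on $[1,0,0]$); since $H$ fixes $U$ and fixes $(1,0,0)$, it permutes the tangency points of those $q$ tangents, and because $(0,1,0)\in U\cap[1,0,0]$ and $[1,0,0]$ is a secant, $U\cap[1,0,0]$ consists of $q+1$ points — but one must check the $q$ \emph{affine} tangency points are exactly $\{(0,w,1):w\in W\}$, which follows by computing the tangent point of $[1,0,t]$ under the hypothesis that $\phi(1,1,0,w)$ fixes $U$ and tracking how $H$ moves a single such point. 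The scalar-field claim in (ii) — that $W$ is closed under multiplication by a subfield $\ff{q_0}$ — I would get by observing that $G_1=G\cap\hat{\grp R}_0$ normalizes $H$ (once (v) is in hand), and conjugation of $\phi(1,1,0,w)$ by $\phi(c,d,0,0)$ sends $w\mapsto c^{-1}\nm w\nm d$, so $W$ is stable under the multiplication action of the set $\{c^{-1}\nm d\}$; the subfield generated by this set inside $\ff q$ (using part (viii)'s conclusion $D<\ffs q$) is the scalar field $\ff{q_0}$.

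\medskip

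\noindent For (iv)--(vi): since $H=G\cap\grp T$ is a normal Sylow-$p$ subgroup of $G$ (it is normal in $\grp T\hat{\grp R}_0\grp B_0$ by \Cref{philemma}(iii) and its proof, hence normal in $G$), we get (v) for free, and a Schur--Zassenhaus / Sylow complement argument gives $G=HG_1$ with $G\cap\grp B_0$-part controlled; here I would need to rule out that the $\grp B_0$-swap $(x,y,1)\mapsto(y,x,1)$ contributes, using that $U$ contains $(0,1,0)$ but the swap sends $(0,1,0)\mapsto(1,0,0)$, and $(1,0,0)\notin U$ because $[1,0,0]$ being a secant through $(0,1,0)$ already accounts for the $q+1$ unital points on it via (iii) with $0\notin$ the relevant coordinate set unless forced — this is the kind of small case-check I'd spell out. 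Then $q\mid o(G)$ is the hypothesis, and $o(G)\mid q(q^2-1)$ follows because $G/H$ embeds in $\hat{\grp R}_0$ which has a subgroup structure constrained by \Cref{subgroups} acting on the $q+1$-point secant and on $W$; the cleanest route is: $G_1$ acts faithfully on the $q+1$ points of $[1,0,0]\cap U$ minus a fixed point, wait — better, on the $q$ tangents of $(1,0,0)$ or on $W\setminus\{0\}$, giving $o(G_1)\mid q^2-1$, hence $o(G)=o(H)o(G_1)\mid q(q^2-1)$ and (vi).

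\medskip

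\noindent For (vii)--(ix): an element of $G_1$ is $\phi(c,d,0,0)$; the map to $c$ and the map to $d$ are group homomorphisms $G_1\to\cyc{\N^\star,\star}$ because $\phi(c,d,0,0)\phi(c',d',0,0)=\phi(c\nm c',d\nm d',0,0)$ (this composition law I would verify from the definition of $\phi$, and it is essentially in \cite{W-2008-uitrn}). So $C$ and $D$ are images of homomorphic, hence subgroups of $\cyc{\N^\star,\star}$, giving the subgroup part of (vii) and (viii); the cardinality $o(C)=o(G_1)$ in (vii) requires showing $c\mapsto\phi(c,d,0,0)$ is injective, i.e.\ $c$ determines $d$, which is exactly the well-definedness of $\delta$ in (ix) — and that holds because if $\phi(c,d,0,0)$ and $\phi(c,d',0,0)$ both fix $U$ then so does their ratio $\phi(1,d'd^{-1},0,0)$, an element of $\hat{\grp R}_0$ of $p'$-order fixing $(0,1,0)$ and $U$; forcing this to be trivial needs the geometry of $U$ (it fixes every line $[1,0,t]$, hence every affine tangent of $(1,0,0)$, hence every tangency point $(0,w,1)$, $w\in W$, so it fixes $W$ pointwise, so $d'd^{-1}$ acts trivially on the $\ff p$-span $W=\ff{q^2}$... wait $o(W)=q$ not $q^2$, so I instead use that it fixes the $q$ points $(0,w,1)$ and these span enough of $\ff{q^2}$ over $\ff{q_0}$, or more simply that a nontrivial $\phi(1,d'',0,0)$ moves $(0,1,1)\in U$ off $U$). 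That $D<\ffs q$ in (viii) comes from the same fixed-line analysis: $\phi(c,d,0,0)$ maps $[1,0,t]\mapsto[1,0,t\nm c-0]=[1,0,t\nm c]$, and for $G_1$ to stabilize the \emph{set} of affine tangents of $(1,0,0)$ we need... actually the $d$-constraint: the tangency point of $[1,0,t]$ is $(0,w(t),1)$ with $w(t)$ linear in $t$, and $\phi(c,d,0,0)$ sends it to $(0,w(t)\nm d,1)$, which must again be a tangency point, i.e.\ $W\nm d=W$; since $o(W)=q$ and $W$ is an $\ff p$-subspace of $\ff{q^2}$ closed under $\nm d$, this pins $d$ to the stabilizer of $W$, and a dimension count shows such $d$ must lie in a subfield $\ff{q_0}\subseteq\ff q$. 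Finally, $\delta:C\to D$ sending $c\mapsto d$ is onto (both $C,D$ are the coordinate projections of the same group $G_1$) and $r$-to-$1$ with $r=o(C)/o(D)$ by the homomorphism theorem; the divisibility $r\mid(q+1)$ I would extract by computing $\Ker\delta=\{c:\phi(c,1,0,0)\in G_1\}$ and showing via the norm map — $\Nm(c^{-1}\nm w\nm d)=\Nm(c)^{-1}\Nm(w)\Nm(d)=\Nm(c)^{-1}\Nm(w)$ when $d=1$ — that every $c\in\Ker\delta$ satisfies $\Nm(c)=1$ (so that $W$'s norm-profile is preserved), whence $\Ker\delta\le\ker\Nm$ which is cyclic of order $q+1$ in $\ffs{q^2}$ (intersected with $\cyc{\N^\star,\star}$), giving $r=o(\Ker\delta)\mid q+1$.

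\medskip

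\noindent \textbf{Main obstacle.} The delicate point is (ix)'s claim $r\mid(q+1)$ together with well-definedness of $\delta$: both hinge on showing that the $\ff p$-subspace $W$ of $\ff{q^2}$ with $o(W)=q$ that arises is ``norm-rigid'' enough that an element $\phi(c,1,0,0)$ fixing $U$ must have $\Nm(c)=1$, and dually that the full stabilizer of $W$ under $\nm$ is small. This requires genuinely using the O'Nan-configuration / geometric constraints on $U$ promised in Sections 6--7, or at minimum a careful analysis of which $\ff p$-hyperplanes $W$ of $\ff{q^2}$ can be a set of tangency points — I expect this to be where the real work lies, and I would isolate it as a lemma (``the scalar field of $W$ and the norm-stabilizer'') before assembling the theorem.
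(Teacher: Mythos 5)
Your overall architecture (normalize via \Cref{reductionlem}, show $G=HG_1$, study the two coordinate projections $C$ and $D$, and the homomorphism $\delta$) matches the paper, but several of the key steps as you propose them would not go through. First, a recurring coordinate error: the lines through $(1,0,0)$ are $[0,1,t]$ and $[0,0,1]$, not $[1,0,t]$ (which pass through $(0,1,0)$). Your arguments for (iii), for the injectivity of $\phi(c,d,0,0)\mapsto c$, and for $D<\ffs{q}$ all lean on "fixing the affine tangents of $(1,0,0)$," identified as $[1,0,t]$, so they collapse. Second, you omit the normalization the paper performs: after \Cref{reductionlem} one only knows the tangency points of $(1,0,0)$ form an $H$-orbit $\{(a,b+w,1):w\in W\}$ for some $(a,b)$; the paper applies the further translation $\phi(1,1,-a,-b)$ to force this orbit to be $\{(0,w,1):w\in W\}$ on $[1,0,0]$. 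Without it, (iii) is false as stated and everything downstream ($G$ fixes $[1,0,0]$, $S=W$, $G=HG_1$) is unsupported. Relatedly, Schur--Zassenhaus only yields \emph{some} complement to $H$, not the specific complement $G_1=G\cap\hat{\grp R}_0$; the paper instead deduces $u=0$ and $v\in W$ for every $\phi(c,d,u,v)\in G$ from the geometry of (iii). Also "$G_1$ acts faithfully on $W\setminus\{0\}$, hence $o(G_1)\mid q^2-1$" is a non sequitur (a faithful action on $q-1$ points only bounds the order by $(q-1)!$).

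The three substantive claims you flag as the main obstacle are indeed where your sketches fail. (a) Injectivity ($o(C)=o(G_1)$): your test point $(0,1,1)$ need not lie in $U$ (there is no reason $1\in W$), and a nontrivial $\phi(1,d'',0,0)$ permutes, rather than fixes, the points $(0,w,1)$; the paper instead counts orbits of $G$ on the $q^3-q$ affine unital points off $[1,0,0]$ using the secants $[1,0,z]$ (alternatively one can note $\phi(1,d'',0,0)$ is a homology with tangent axis $[0,1,0]$ and invoke \Cref{centralcollineationlemma}). (b) $D<\ffs{q}$: the hard case is $d$ a nonsquare, where $\nm d$ acts as $w\mapsto w^qd$ and no "dimension count" applies; the paper rules this out by showing the sets $w\nm D$ partition $W^\star$, forcing $o(D)\mid q-1$ and a parity contradiction with $\Nm(h)\in\nonsquare_q$. (c) $r\mid(q+1)$: your norm argument rests on the incorrect conjugation formula $w\mapsto c^{-1}\nm w\nm d$ (the correct one, computed in the paper's proof of (v), is $w\mapsto w\nm d$, with no $c$), so conjugation places no norm condition on $c$ whatsoever; the paper instead lets $\Ker(\delta)$ act freely on the $q+1$ unital points of a secant $[0,1,z]$ through $(1,0,0)$. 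Each of these requires a genuinely different argument from the one you propose.
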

\begin{proof}
By hypothesis, $U'$ satisfies the conditions of \Cref{reductionlem}. Thus,
$U'$ is necessarily isomorphic to a parabolic unital $U''$ containing $(0,1,0)$.
Let $(a,b,1)$ be one of the tangency points of a tangent line on the non-unital
point $(1,0,0)$ for $U''$.
Set $U=U''^{\phi(1,1,-a,-b)}$.
Then $U$ is a parabolic unital of $\NP$ containing $(0,1,0)$ isomorphic to
$U'$, which yields (i).

The unital $U$ is in fact a version of the unital described in
\Cref{reductionlem} (iii), which gives $o(H)=q$.
The only part of (ii) left to prove is that the
scalar field of $W$ is a subfield of $\ff{q}$. To this end,
set $\ff{q_0}$ to be the subfield of $\ff{q^2}$ that acts as the scalars for
$W$, and let $W$ have dimension $j$ over $\ff{q_0}$.
Then $q=q_0^j$, so that $\ff{q_0}$ is necessarily a subfield of $\ff{q}$.

For (iii), as $(a,b,1)^{\phi(1,1,-a,-b)}=(0,0,1)\in U''$, the orbit of
$\{(0,w,1)\,:\, w\in W\}$ under $H$ is the set of $q$ tangency points of the
tangent lines of $U$ through $(1,0,0)$.
Since this orbit lies on $[1,0,0]$, the group $G$ fixes $[1,0,0]$.
Consequently,
\begin{equation*}
G\cap \grp{T} \hat{\grp R}_0 <
\{ \phi(c,d,0,v) \,: \,c,d\in\N^\star\land v\in\N \}.
\end{equation*}
Since $o(W)=q$, this orbit is exactly the set of $U$-unital points on $[1,0,0]$,
as claimed.

Now set $S=\{s \in\N \,: \,\phi(c,d,0,s) \in G\}$. Clearly, $W\subseteq S$.
On the other hand, $(0,0,1)^{\phi(c,d,0,s) }=(0,s,1)$ is an affine unital point
on $[1,0,0]$ for any $\phi(c,d,0,s) \in G$. So, $S\subseteq W$ and $S=W$.
It follows that $G=H G_1$.
By \Cref{andresatz12}, we must have $G_1 < \hat{\grp{R}}_0$, and so
$o(G_1)|(q^2-1)$.
Since $H\cap G_1=\{id\}$, we have $q|o(G)$ and $o(G)|q(q^2-1)$, establishing
(iv).

For (v), let $\varphi=\phi(1,1,0,w)$ for some $w\in W$ and
choose any $\rho=\phi(c,d,0,v)\in G$. Now, $\rho^{-1}=\phi(c^{-1},d^{-1},0,v')$ with
$v'=-v\nm d^{-1}$. One checks that $(x,y,1)^{\rho^{-1} \varphi \rho}=(x,y+w\nm d,1)$,
from which we find $\rho^{-1} \varphi \rho = \phi(1,1,0,w\nm d)\in\mathfrak T$.
However, since $\rho^{-1} \varphi \rho\in G$, we have $\rho^{-1} \varphi \rho\in H$, and $H\norm G$.
We also have $W=\{w\nm d\,:\, w\in W\}$ anytime $\phi(c,d,0,v)\in G$.

To prove (vi), we already know that
$G_1<\hat{\grp{R}}_0$, so that
$G_1$ consists only of collineations of the form $\phi(c,d,0,0)$.
The order of $G_1$ is clear since $o(H)=q$.

For (vii), since $C$ is finite, to prove it is a subgroup of
$\cyc{\N^\star,\star}$ it suffices to prove $C$ is closed,
but this follows immediately from the fact $G_1$ is a group.
This also proves $D$ is a subgroup of $\cyc{\N^\star,\star}$.
To determine $o(C)$, let $U_A$ denote the $q^3-q$ points in $U$ that do not
lie on $[1,0,0]$. Every such point is of the form $(x,y,1)$ and it is easily
observed that $G$ can fix no such point. Thus, the action of $G$ on $U_A$
determines $r$ orbits of size $o(G)$.
Now let us consider the lines $[1,0,z]$ with $z\ne 0$.
Each of these lines is a secant line of $U$, since $(0,1,0)\in [1,0,z]$ and
$[0,0,1]$ is the unique tangent line through $(0,1,0)$.
Thus, each line $[1,0,z]$ contains $q$ points of $U_A$.

Let $O$ be any orbit of the action of $G$ on $U_A$, and let us
suppose $(-z,y,1)\in O\cap [1,0,z]$. Then
$(-z,y,1)^{\phi(1,1,0,v)}=(-z,y+v,1)\in O\cap [1,0,z]$, so that
$ O\cap [1,0,z] = U_A\cap [1,0,z]$; i.e. $ O$ contains all of the
points in $U_A$ lying on $[1,0,z]$.
Now, for each $c\in C$, we have $[1,0,z]^{\phi(c,d,0,v)}=[1,0,z\star c]$, so
that the number of points in $ O$ is $q\, o(C)$. Thus, $o(G)=q\, o(C)$.
But $o(G)=q\, o(G_1)$ by (vi), and so $o(G_1)=o(C)$.

For the claimed restrictions on $D$ in (viii), we know
from our proof of (v) that any $d$ for which $\phi(c,d,0,v)\in G$
satisfies $\{w\nm d\,:\, w\in W\}=W$.
If $D<\square_{q^2}^\star$, then $W\nm d=dW=W$ for all $d\in D$, so that $D$ is
contained in the scalar field of $W$. Thus, $D<\ffs{q_0}$.
We now show that $D\cap\nonsquare_{q^2}=\varnothing$.
Suppose otherwise.
Then, by \Cref{subgroups}, we necessarily have $D=S\cup S h$ for some
$S<\square_{q^2}^\star$ and $h\in\nonsquare_{q^2}$ satisfying
$\Nm(h)\in S$.
Note that $\Nm(h)\in\nonsquare_q$, as $-1=h^{(q^2-1)/2}=\Nm(h)^{(q-1)/2}$.
Thus, $S\cap\nonsquare_q\neq\varnothing$.
We first note that for any $w\in W^\star$, $|w\nm D|=o(D)$. For this to be
false, $w\star h=w^q h=w s$ for some $s\in S$. Thus, $h=s w^{1-q}$, which is
a square of $\ffs{q^2}$, a contradiction.
Now fix $w_1,w_2\in W$. If $w_1\nm D$ and $w_2\nm D$ are not disjoint, then
there exists $d_1,d_2\in D$ for which $w_1\nm d_1=w_2\nm d_2$.
Thus, $w_1\nm d_1\nm D=w_2\nm d_2\nm D$, and simplifying yields
$w_1\nm D=w_2\nm D$. This proves that $w_1\nm D$ and $w_2\nm D$ are either
equal or disjoint.
Since $|w\nm D|=o(D)$ and $W\nm d=W$, we find $o(D)$ divides $|W^\star|=q-1$.
Thus, $o(S)|(q-1)/2$, which forces $S<\square_q^\star$ by the uniqueness
of subgroups of cyclic groups. But this contradicts
$S\cap\nonsquare_q\neq\varnothing$. We conclude that
$D\cap\nonsquare_{q^2}=\varnothing$, and so $D<\ffs{q_0}$ proving (viii).

Finally, that $\delta$ is an onto homomorphism follows at once from the
definition of $\delta$ and the fact $G_1$ is a group.
Equally obvious is that the kernel of $\delta$ is
$$\Ker(\delta)=\{c\in C\,:\, \phi(c,1,0,0)\in G_1\}.$$
Setting $r=o(\Ker(\delta))=o(C)/o(D)$, it is now immediate that $\delta$ is
$r$-to-1. It remains to prove $r|(q+1)$. To this end,
let $[0,1,z]$ be any secant line and consider the orbits of the action of
the group $\{\phi(c,1,0,0)\,:\, c\in\Ker(\delta)\}$
on the $q+1$ unital points on $[0,1,z]$.
Each such orbit has size $r$, and so $r|(q+1)$.
\end{proof}
It turns out that the maximal case stemming from \Cref{reductionlem} (iv) and
(v) can be dealt with in a similar fashion as in \Cref{restrictionthm}, but the
outcome is far more restrictive.
\begin{thm} \label{restrictionthm1y0}
Let $U'$ be a unital of order $q$ of $\NP$ that does not contain $(0,1,0)$ or $(1,0,0)$.
If $q|o(\Aut(U')\cap\grp{T}\hat{\grp{R}}_0\grp{B}_0)$,
then $U'$ is isomorphic to a unital $U$ for which the following statements hold.
\begin{enumerate}[label=(\roman*)]
\item $U$ is a parabolic unital of $\NP$ containing $(1,1,0)$.

\item The group $H=\Aut(U)\cap\grp{T}$ satisfies
\begin{equation*}
H=\{\phi(1,1,w,w)\,:\, w \in W\},
\end{equation*}
where $W$ is a subspace of $\ff{q^2}$, when viewed as a vector
space over $\ff{p}$, with $o(W)=q$.
Additionally, $W$ must have a scalar field $\ff{q_0}$ for some
subfield of $\ff{q}$.

\item $U\cap[-1,1,0] = \{(w,w,1) \,:\, w\in W\}$. Furthermore, this is precisely
the set of tangency points of the $q$ affine tangents of $(1,0,0)$.

\item The group $G=\Aut(U)\cap\grp{T}\hat{\grp{R}}_0\grp{B}_0$ satisfies
$G=H$.
\end{enumerate}
\end{thm}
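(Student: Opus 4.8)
The plan is to mimic the opening of the proof of \Cref{restrictionthm} to establish (i)--(iii), and then to show that the extra rigidity of the configuration (the unital point on $[0,0,1]$ now being $(1,1,0)$ rather than $(0,1,0)$) forces $G_1 = G\cap\hat{\grp R}_0$ to be trivial, hence $G = H$. First I would invoke \Cref{reductionlem}: since $U'$ does not meet the special orbit $\{(0,1,0),(1,0,0)\}$ of $\Aut(\NP)$ on $[0,0,1]$, it meets the complementary orbit, and \Cref{reductionlem}(iv) produces an isomorphic unital containing $(1,1,0)$ with $H = \Aut(U)\cap\grp T = \{\phi(1,1,w,w)\,:\,w\in W\}$ and $o(W)=q$. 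Translating by a suitable $\phi(1,1,-a,-b)$ as in \Cref{restrictionthm}, I can further arrange that the origin $(0,0,1)$ is one of the tangency points of a tangent through $(1,0,0)$; this normalisation gives (i). Statement (ii) is then exactly the content of \Cref{reductionlem}(iv) together with the scalar-field argument used verbatim in \Cref{restrictionthm}(ii): if $W$ has scalar field $\ff{q_0}$ and $\ff{q_0}$-dimension $j$, then $q = q_0^j$ so $\ff{q_0}\subseteq\ff{q}$. For (iii), the $H$-orbit of $(0,0,1)$ is $\{(w,w,1)\,:\,w\in W\}$ because $\phi(1,1,w,w)$ sends $(0,0,1)$ to $(w,w,1)$; this orbit lies on the line $[-1,1,0]$ (one checks $(w,w,1)\,\Inc\,[-1,1,0]$ since $-w\nm 1 + w\nm 1 + 0 = 0$), and since $o(W)=q$ it is the full set of $U$-points on that secant, and these are precisely the $q$ tangency points of the affine tangents through $(1,0,0)$, just as in \Cref{restrictionthm}(iii).

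The heart of the theorem is (iv). As in \Cref{restrictionthm}, the fact that $G$ permutes the tangency points of $(1,0,0)$ means $G$ fixes the line carrying them, namely $[-1,1,0]$. A collineation $\phi(c,d,u,v)$ fixes $[-1,1,0]$ only for very restricted $(c,d,u,v)$; working out the image of $[-1,1,0]$ under the general $\phi(c,d,u,v)$ (using the line-action formulas in the definition of $\phi$) and setting it equal to $[-1,1,0]$ should pin down $c\nm(\text{something}) = d$-type relations. Simultaneously, $G$ must fix the unique unital point $(1,1,0)$ on $[0,0,1]$, and one computes $\phi(c,d,u,v)$ on $(1,1,0)$ — by \Cref{philemma}-style formulas the image is determined by $c^{-1}\nm 1\nm d$ in the relevant coordinate — so fixing $(1,1,0)$ forces $c^{-1}\nm d = 1$ in $\N$, i.e. $d = c$. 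Feeding $d = c$ back into the condition that $G$ preserves the affine unital point set, or more efficiently into the requirement $W\nm d = W$ inherited from the proof of \Cref{restrictionthm}(v) (which applies here too: conjugating $\phi(1,1,w,w)$ by $\phi(c,d,u,v)\in G$ must land back in $H$), I expect to conclude $d = c$ acts trivially enough that $c = 1$. The cleanest route: show that $\phi(c,c,u,v)\in G$ together with $G$ fixing $(1,1,0)$ and the origin being forced into $U$ leaves only $c=1$, $u=v\in W$, i.e. $G = H$.

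The main obstacle I anticipate is the bookkeeping with nearfield multiplication when computing the action of $\phi(c,d,u,v)$ on the points $(1,1,0)$ and $(w,w,1)$ and on the line $[-1,1,0]$: the formulas for $\phi$ on lines involve inverses and products in $\N$, and the non-commutativity (and the square/non-square case split in \eqref{Nmult}) means I must be careful that, e.g., $c^{-1}\nm s\nm d$ really does collapse as claimed. A secondary subtlety is ruling out that $G_1$ could be non-trivial by "twisting": one might worry that some $\phi(c,c,0,v)$ with $c\ne 1$ fixes $(1,1,0)$, preserves $W$ along the diagonal, and permutes the diagonal tangency points; I would dispatch this by the orbit-counting trick from \Cref{restrictionthm}(vii)--(ix), showing any such non-trivial element would force a secant line through $(1,1,0)$ to carry an orbit of the wrong size, or by directly checking that $\phi(c,c,0,v)$ moves the diagonal line $[-1,1,0]$ unless $c=1$. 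Once $G_1$ is shown trivial, $G = HG_1 = H$ follows, completing (iv).
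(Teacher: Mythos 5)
Your treatment of (i)--(iii) and the reduction of $G\cap\grp{T}\hat{\grp R}_0$ to maps of the form $\phi(c,c,v,v)$ matches the paper's argument. The gap is in the final step of (iv): none of the mechanisms you propose for forcing $c=1$ actually works. A direct computation with the line-action formulas shows that $\phi(c,c,0,0)$ fixes $(0,0,1)$, fixes $(1,1,0)$ (since $c^{-1}\nm 1\nm c=1$ in $\cyc{\N^\star,\nm}$), satisfies $W\nm c=W$ whenever $c\in D$, and --- contrary to your ``cleanest route'' --- fixes the diagonal line $[-1,1,0]$ for \emph{every} $c$, because $[-1,1,0]^{\phi(c,c,0,0)}=[c^{-1}\nm(-c),1,0]=[-1,1,0]$. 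So the constraints you list are all vacuously satisfied by the whole candidate group $G_1=\{\phi(c,c,0,0)\}$, and your vaguer fallback (orbit-counting as in \Cref{restrictionthm}(vii)--(ix)) cannot force triviality either, since in the setting of \Cref{restrictionthm} the exactly analogous counting is consistent with $o(G_1)$ as large as $q^2-1$.

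The missing idea is to produce a \emph{second} $G$-invariant line of the form $[-1,1,z]$ with $z\ne 0$, because $[-1,1,z]^{\phi(c,c,0,0)}=[-1,1,z\nm c]$, and $z\nm c=z$ with $z\ne 0$ does force $c=1$. The paper gets such a line from the point $(0,1,0)\notin U$: it is fixed by $G$, so $G$ permutes the set $T$ of tangency points of the $q$ affine tangents through $(0,1,0)$; for $(x,y,1)\in T$ one has $x\ne y$ by (iii), the $H$-orbit of $(x,y,1)$ is $\{(x+w,y+w,1)\,:\,w\in W\}\subseteq[-1,1,x-y]$, and since $o(W)=q=|T|$ this orbit is all of $T$, so $G$ fixes $[-1,1,x-y]$ with $x-y\ne 0$. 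That single extra observation closes your argument; without it the proof of (iv) does not go through.
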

\begin{proof}
The proof is, for the most part, very similar to that of \Cref{restrictionthm}.
By hypothesis, $U'$ satisfies the conditions of \Cref{reductionlem} (iv), and
so $U'$ is isomorphic to a parabolic unital $U''$ containing $(1,1,0)$.
Set $G=\Aut(U)\cap\grp{T}\hat{\grp{R}}_0\grp{B}_0$ and
$G_1=G\cap\hat{\grp R}_0$.
Let $(a,b,1)$ and $U=U''^{\phi(1,1,-a,-b)}$ be as in the proof of
\Cref{restrictionthm}.  The orbit of
$\{(w,w,1)\,:\, w\in W\}$ of $(0,0,1)\in U'$ under $H$ is the set of $q$ tangency points of the
tangent lines of $U$ through $(1,0,0)$.
Since this orbit lies on $[-1,1,0]$, the group $G$ fixes $[-1,1,0]$.
By considering \Cref{andresatz12}, we have
\begin{equation*}
G\cap \grp{T} \hat{\grp R}_0 <
\{ \phi(c,c,v,v) \,: \,c\in\N^\star\land v\in\N \}.
\end{equation*}
Indeed, if $[-1,1,0]=[-1,1,0]^{\phi(c,d,u,v)}=[c^{-1} \nm (-d), 1, - u \nm c^{-1}  \nm (-d) - v]$, then $c^{-1} \nm (-d)=-1$ and $- u \nm c^{-1}  \nm (-d) - v=0$, giving $d=c$ and $u=v$.

Now set $S=\{s \in\N \,: \,\phi(c,c,s,s) \in G\}$. Deducing that
$S=W$ and $G=H G_1$ now follows similarly to the previous proof.
We also have $G_1=\{\phi(c,c,0,0)\,:\, c\in C\}$ for some group $C$
isomorphic to $G_1$.
To prove (iv), we shall now show $G_1$ is trivial.

Consider the point $(0,1,0)$. Since this point does not lie in $U$, we know
there are $q+1$ tangent lines through $(0,1,0)$. One of these lines is
necessarily $[0,0,1]$, which is fixed by $G$. The remaining $q$ tangent lines
each contain a unique point of $U$. Let $T$ be the set of these tangency points.
Since $(0,1,0)$ is fixed by $G$, the action of $G$ on $U$ must
map $T$ to itself.
Fix a point $(x,y,1)\in T$. Note that $x\neq y$ by (iii). The line through $(x,y,1)$ and $(1,1,0)$ is
$[-1,1,x-y]$. For any $\rho=\phi(1,1,w,w)$ with $w\in W$, we have
$(x,y,1)^\rho=(x+w,y+w,1)\in [-1,1,x-y]$. Since $o(W)=q$, all $q$ points of
$T$ lie on $[-1,1,x-y]$, and $T$ is an orbit of $G$.
In particular, $G$ must fix the line $[-1,1,x-y]$.
Suppose $\varphi=\phi(c,c,0,0)\in G_1$. Then
$[-1,1,x-y] = [-1,1,x-y]^\varphi = [-1,1,(x-y)\star c]$, which forces $c=1$.
Hence, $G_1$ is trivial and $G=HG_1=H$.
\end{proof}

\section{The non-existence of certain O'Nan configurations in unitals in $\NP$}

\begin{defn} \label{onanconfig}
An {\em O'Nan configuration} in a unital is a collection of four distinct lines
meeting in six distinct unital points.
\end{defn}
In 1972, O'Nan \cite{O-1972-aoubd} noted that these configurations do not occur
in the classical unital. It was later conjectured by Piper \cite{P-1979-ubd}
that any unital without such a configuration is the classical unital.
Recently, Stroppel \cite{S-2024-uwoca} showed that
the conjecture is true if for each unital point, there is an automorphism of
the unital that fixes the point and every block through it.

\begin{lem}\label{lem-collinear}
In $\NP$, three points $(x,y,1), (x_1,y_1,1), (x_2,y_2,1)$ are collinear on a
line $[u,1,z]$ with $u\ne 0$, if and only if
$(x-x_2)\nm (x-x_1)^{-1}=(y-y_2)\nm(y-y_1)^{-1}.$
If these products are in $\ff{q}$, then
$\frac{x - x_2}{x - x_1} = \frac{y - y_2}{y - y_1}.$
\end{lem}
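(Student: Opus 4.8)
The plan is to rewrite collinearity as the three incidence equations of $\NP$ and then manipulate them, the only tools needed being the right distributive law and the group structure of $\cyc{\N^\star,\nm}$. Since $y\nm 1 = y$ and $1\nm z = z$, a point $(x,y,1)$ lies on $[u,1,z]$ precisely when $x\nm u + y + z = 0$. Hence the three given points are collinear on $[u,1,z]$ if and only if
\begin{align*}
x\nm u + y + z &= 0,\\
x_1\nm u + y_1 + z &= 0,\\
x_2\nm u + y_2 + z &= 0.
\end{align*}
Subtracting the first equation from the other two and using right distributivity in the form $x\nm u - x_i\nm u = (x-x_i)\nm u$ eliminates $z$ and leaves
\[
(x-x_1)\nm u = -(y-y_1),\qquad (x-x_2)\nm u = -(y-y_2).
\]

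First I would clear away the degenerate cases. Because $\cyc{\N^\star,\nm}$ is a group it has no zero divisors, so $u\ne 0$ together with the first equation forces $x=x_1 \Leftrightarrow y=y_1$; when this happens $(x,y,1)=(x_1,y_1,1)$, so for genuinely distinct collinear points we have $x\ne x_1$, $x\ne x_2$ and $y\ne y_1$, $y\ne y_2$, and every inverse below is defined. From $(x-x_1)\nm u = -(y-y_1)$ I solve $u=(x-x_1)^{-1}\nm\bigl(-(y-y_1)\bigr)$ by left-multiplying by $(x-x_1)^{-1}$, substitute into $(x-x_2)\nm u = -(y-y_2)$, and use associativity to get $(x-x_2)\nm(x-x_1)^{-1}\nm\bigl(-(y-y_1)\bigr)=-(y-y_2)$. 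The two minus signs cancel: since $q$ is odd, $-1\in\square_{q^2}$, so $a\nm(-b)=-(a\nm b)$ for all $a,b$ and negation may be pulled through $\nm$ from either side. This yields $(x-x_2)\nm(x-x_1)^{-1}\nm(y-y_1)=(y-y_2)$, and right-multiplying by $(y-y_1)^{-1}$ gives the claimed identity $(x-x_2)\nm(x-x_1)^{-1}=(y-y_2)\nm(y-y_1)^{-1}$. For the converse, assuming the identity I set $u:=(x-x_1)^{-1}\nm\bigl(-(y-y_1)\bigr)$ (which is nonzero) and $z:=-(x\nm u + y)$, and reverse each step to verify all three incidence equations; thus $[u,1,z]$ is a line with $u\ne 0$ through the three points.

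For the final sentence I would use \eqref{Nmult} and the description of $\nm$-inverses recalled before it. If $x-x_1\in\square_{q^2}$ then $(x-x_1)^{-1}=\tfrac{1}{x-x_1}$ is again a square, so $(x-x_2)\nm(x-x_1)^{-1}=\tfrac{x-x_2}{x-x_1}$; if $x-x_1\in\nonsquare_{q^2}$ then $(x-x_1)^{-1}=\tfrac{1}{(x-x_1)^q}$ is again a nonsquare (raising a nonsquare to the $q$-th power keeps it a nonsquare, as $(-1)^q=-1$), so $(x-x_2)\nm(x-x_1)^{-1}=\bigl(\tfrac{x-x_2}{x-x_1}\bigr)^q$. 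In either case the product is $\tfrac{x-x_2}{x-x_1}$ or its image under the Frobenius $t\mapsto t^q$; as that Frobenius fixes $\ff{q}$ and is an involution on $\ff{q^2}$, if the product lies in $\ff{q}$ then $\tfrac{x-x_2}{x-x_1}\in\ff{q}$ and equals the product. The identical statement holds for $(y-y_2)\nm(y-y_1)^{-1}$ and $\tfrac{y-y_2}{y-y_1}$, and combining with the equality of the two $\nm$-products gives $\tfrac{x-x_2}{x-x_1}=\tfrac{y-y_2}{y-y_1}$.

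The work here is bookkeeping rather than ideas, and the only step that needs genuine care is the passage from the two equations in $u$ to the symmetric identity: because $\nm$ is merely right distributive and noncommutative, one must solve for $u$ by left-multiplication, cancel $(y-y_1)$ by right-multiplication, and be sure that each sign can be moved through a $\nm$-product. The fact that $-1$ is a square in $\ff{q^2}$ for odd $q$ --- so that multiplication by $-1$ is ``two-sided'' for $\nm$ --- is what makes this last point routine.
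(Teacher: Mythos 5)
Your proof is correct and follows essentially the same route as the paper's: both reduce collinearity to the incidence equations $x\nm u + y + z = 0$ and eliminate the line coordinates using right distributivity, associativity, and the fact that $-1\in\square_{q^2}$ lets signs pass through $\nm$. The paper merely organizes the computation by first writing down the line through $(x,y,1)$ and $(x_1,y_1,1)$ and then imposing incidence of the third point, which is the same elimination you perform; your explicit case analysis for the final sentence (square versus nonsquare denominator, Frobenius fixing $\ff{q}$) is exactly what the paper compresses into ``follows from the definition of $\nm$.''
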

\begin{proof}
The line determined by $(x,y,1)$ and $(x_1,y_1,1)$ is
$[-(x -x_1)^{-1} \nm (y - y_1), 1, x \nm (x -x_1)^{-1} \nm (y-y_1)-y]$.
For $(x_2,y_2,1)$ to be on this line, we must have
$$- x_2 \nm (x - x_1)^{-1} \nm (y - y_1) + y_2
+ x \nm (x - x_1)^{-1} \nm (y-y_1) - y = 0.$$
Simplifying gives $(x-x_2)\nm (x-x_1)^{-1}=(y-y_2)\nm(y-y_1)^{-1}$.
The final statement follows from the definition of $\nm$.
\end{proof}
We next prove a lemma concerning O'Nan configurations in $\NP$ containing
$(0,1,0)$.
\begin{lem}\label{lem-planeONan}
Consider an O'Nan configuration in $\NP$ whose six points are $(0,1,0)$,
$\pnt{P}=(x,y,1)$ and $\pnt{P_{ij}}=(x_i,y_{ij},1)$ with $y\neq y_{ij}$ for
$i,j=1,2$.
Suppose $\Tr(y_{11})=\Tr(y_{12})$.
Then $\frac{x - x_2}{x - x_1}\in \ff{q}$ if and only if $\Tr(y_{21})=\Tr(y_{22})$.
\end{lem}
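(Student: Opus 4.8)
The plan is to use \Cref{lem-collinear} to encode the configuration in a single nearfield element $\lambda$, and then to express both trace conditions and the condition $\frac{x-x_2}{x-x_1}\in\ff{q}$ in terms of whether $\lambda$ lies in $\ff{q}$. First I would determine the incidence structure. The four lines of an O'Nan configuration are in general position, so $(0,1,0)$ lies on exactly two of them; the only lines of $\NP$ through $(0,1,0)$ are the line at infinity $[0,0,1]$ (which contains none of the other five points, hence is not one of the four) and the vertical lines $x=c$. Thus the two configuration lines through $(0,1,0)$ are $x=c_1$ and $x=c_2$ with $c_1\ne c_2$, and among the first coordinates $x,x_1,x_1,x_2,x_2$ of $P,P_{11},P_{12},P_{21},P_{22}$ exactly two equal $c_1$, two equal $c_2$, and one equals neither; this forces $x,x_1,x_2$ to be pairwise distinct with $\{c_1,c_2\}=\{x_1,x_2\}$, so those two lines are $\overline{(0,1,0)\,P_{11}\,P_{12}}$ (the vertical line $x=x_1$) and $\overline{(0,1,0)\,P_{21}\,P_{22}}$ (the vertical line $x=x_2$), and $P$ lies on the remaining two configuration lines. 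Since the line $x=x_1$ is the only line meeting both $P_{11}$ and $P_{12}$ and it misses $P$, those remaining two lines each pass through $P$ and one of each of the pairs $\{P_{11},P_{12}\}$, $\{P_{21},P_{22}\}$; after possibly swapping $P_{21}$ and $P_{22}$ (which leaves the hypothesis and the conclusion unchanged) they are $m_1=\overline{P\,P_{11}\,P_{21}}$ and $m_2=\overline{P\,P_{12}\,P_{22}}$. Here $y_{11}\ne y_{12}$ and $y_{21}\ne y_{22}$, and each $m_i$ is a line $[u,1,z]$ with $u\ne0$ (it is not $[0,0,1]$ as $P$ is affine, not vertical as $m_i\not\ni(0,1,0)$, not horizontal as $y\ne y_{ij}$).

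\Cref{lem-collinear} applied to $m_1$ and to $m_2$ therefore gives
\[
\lambda:=(x-x_2)\nm(x-x_1)^{-1}=(y-y_{21})\nm(y-y_{11})^{-1}=(y-y_{22})\nm(y-y_{12})^{-1},
\]
and $\lambda\ne0$ (else $y=y_{21}$). Put $a=y-y_{11}$, $b=y-y_{12}$, $c=y-y_{21}$, $d=y-y_{22}$, all nonzero; then $c=\lambda\nm a$ and $d=\lambda\nm b$. Two observations reduce the lemma: from \eqref{Nmult}, $\lambda$ equals $\frac{x-x_2}{x-x_1}$ or its $q$-th power according to whether $x-x_1\in\square_{q^2}$, so $\frac{x-x_2}{x-x_1}\in\ff{q}\iff\lambda\in\ff{q}$; and since $\Tr(y_{ij})=\Tr(y)-\Tr(y-y_{ij})$, the hypothesis says $\Tr(a)=\Tr(b)$ and the conclusion says $\Tr(c)=\Tr(d)$. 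Hence it is enough to show: if $\Tr(a)=\Tr(b)$ and $a\ne b$, then $\Tr(c)=\Tr(d)\iff\lambda\in\ff{q}$.

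To handle the square/non-square alternative in \eqref{Nmult} uniformly, for $z\in\ffs{q^2}$ put $\hat z=z$ if $z\in\square_{q^2}$ and $\hat z=z^q$ if $z\in\nonsquare_{q^2}$. Then $z$ and $\hat z$ have the same square type, so $z\mapsto\hat z$ is injective on $\ffs{q^2}$; also $\Tr(\hat z)=\Tr(z)$, and $\Tr(\lambda\nm z)=\Tr(\lambda\hat z)$ by \eqref{Nmult} together with $\Tr(w)=\Tr(w^q)$ and $\lambda^{q^2}=\lambda$. Hence $\Tr(c)-\Tr(d)=\Tr\big(\lambda(\hat a-\hat b)\big)$. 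Since $\Tr(\hat a)=\Tr(a)=\Tr(b)=\Tr(\hat b)$, the element $\hat a-\hat b$ has zero trace, so $\hat a-\hat b=\epsilon t$ with $t\in\ff{q}$, and $t\ne0$ because $\hat a\ne\hat b$ (as $a\ne b$ and $z\mapsto\hat z$ is injective). Writing $\lambda=\lambda_1+\lambda_2\epsilon$ with $\lambda_i\in\ff{q}$ and using $\epsilon^q=-\epsilon$, $\epsilon^2=\beta$, one finds $\Tr(\lambda\epsilon)=2\lambda_2\beta$, hence $\Tr(c)-\Tr(d)=2t\lambda_2\beta$. As $t\ne0$, $\beta\ne0$ and $p$ is odd, this vanishes exactly when $\lambda_2=0$, i.e.\ when $\lambda\in\ff{q}$; with the two observations above this is the assertion.

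The main obstacle is the computation in the third paragraph: the products $c=\lambda\nm a$ and $d=\lambda\nm b$ behave differently according to the square types of $a$ and $b$, so a direct case analysis is cumbersome. The substitution $z\mapsto\hat z$ neutralizes this, since it makes $\Tr(\lambda\nm z)=\Tr(\lambda\hat z)$ hold regardless of the square type of $z$ and is injective, so that $a\ne b$ immediately yields $t\ne0$; after that the argument is linear algebra inside $\ff{q^2}=\ff{q}(\epsilon)$.
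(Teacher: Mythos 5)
Your proof is correct, and its computational core takes a genuinely different route from the paper's. Both arguments start from \Cref{lem-collinear}, which gives the relation $\lambda=(x-x_2)\nm(x-x_1)^{-1}=(y-y_{21})\nm(y-y_{11})^{-1}=(y-y_{22})\nm(y-y_{12})^{-1}$, but the paper then proves the two implications separately: the forward direction by deriving $y_{22}-y_{21}=\frac{y_{21}}{y_{11}}(y_{12}-y_{11})$ and taking traces, and the converse by writing $y_{ij}=s_i+t_{ij}\epsilon$ and running an explicit square/non-square case analysis (four cases, one displayed) to show $y_{21}\nm y_{11}^{-1}\in\ff{q}$. You instead normalize the nearfield product once and for all via $z\mapsto\hat z$ (so that $\Tr(\lambda\nm z)=\Tr(\lambda\hat z)$ regardless of the square type of $z$) and collapse the whole lemma to the single identity $\Tr(c)-\Tr(d)=2t\lambda_2\beta$ with $t\neq 0$, which yields both directions of the equivalence simultaneously and eliminates the case split. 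You also spell out the incidence analysis (that the two configuration lines through $(0,1,0)$ are the verticals $[1,0,-x_1]$, $[1,0,-x_2]$, that $x,x_1,x_2$ are pairwise distinct, and that the remaining two lines are of the form $[u,1,z]$ with $u\neq 0$ after a harmless relabelling), which the paper leaves implicit in its notation; this is a worthwhile addition. The trade-off is that the paper's computation is more elementary and self-contained in coordinates, while yours is shorter, symmetric in the two directions, and makes the role of the condition $\lambda\in\ff{q}$ (equivalently $\lambda_2=0$) transparent.
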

\begin{proof}
Applying $\phi(1,1,-x,-y)$ if needed, without loss of generality, we may assume $\pnt{P}=(0,0,1)$.
By assumption, $x_i, y_{ij}\neq 0$ and $y_{i1}\neq y_{i2}$ for $i=1,2$.
Applying \Cref{lem-collinear} gives
\begin{equation}\label{eqn-onan}
x_2 \nm x_1^{-1}=y_{21}\nm y_{11}^{-1}=y_{22}\nm y_{12}^{-1}.
\end{equation}

Suppose $\Tr(y_{11})=\Tr(y_{12})$
and $\frac{x_2}{x_1}\in \ff{q}$.
Since $\Tr(y_{11})=\Tr(y_{12})$, we have $y_{12}=y_{11}+a$ for some $a\in \ff{q^2}$ with $\Tr(a)=0$.
Let $b=y_{22}-y_{21}$.
It follows from \eqref{eqn-onan} that
$\frac{y_{21}}{y_{11}}=\frac{y_{22}}{y_{12}}=\frac{y_{21}+b}{y_{11}+a}$.
On simplification,
$b=\frac{y_{21}}{y_{11}}a$
and
so $\Tr(b)=\Tr\left(\frac{y_{21}}{y_{11}}a\right)=\frac{y_{21}}{y_{11}}\Tr\left(a\right)=0$, where the second last equal sign holds because $\frac{y_{21}}{y_{11}} \in \ff{q}$.
So $\Tr(y_{22})=\Tr(y_{21}+b)=\Tr(y_{21})$.

Conversely, suppose $\Tr(y_{11})=\Tr(y_{12})$
and $\Tr(y_{21})=\Tr(y_{22})$.
Then for $i,j=1,2$,  $y_{ij}=s_i + t_{ij} \epsilon$ for some $s_i,t_{ij} \in \ff{q}$.
Recalling $\epsilon^q=-\epsilon$, we have
$y_{2j}\nm y_{1j}^{-1}
=\frac{s_2 \pm t_{2j}\epsilon}{s_1 \pm t_{1j}\epsilon}$, where the sign of $\pm$
depends on whether $y_{1j}$ is a square or not.
Consider the case when $y_{11}\in \square_{q^2}$ and $y_{12}\in\nonsquare_{q^2}$.
Equation \eqref{eqn-onan} gives
$\frac{s_2 + t_{21}\epsilon}{s_1 + t_{11}\epsilon}=\frac{s_2 - t_{22}\epsilon}{s_1 - t_{12}\epsilon}$, which is simplified to
$[s_1 (t_{21}+t_{22})-s_2(t_{11}+t_{12})]-(t_{12}t_{21}+t_{11}t_{22})\epsilon=0$.
So
$s_2=\frac{s_1 (t_{21}+t_{22})}{t_{11}+t_{12}}$ and
$t_{12}t_{21}=t_{11}t_{22}$.
Substituting these two equations one by one, we get
\[\begin{split}
y_{21}\nm y_{11}^{-1}
=\frac{s_2 + t_{21}\epsilon}{s_1 + t_{11}\epsilon}
=&\frac{\frac{s_1 (t_{21}+t_{22})}{t_{11}+t_{12}} + t_{21}\epsilon}{s_1 + t_{11}\epsilon}\\
=&\frac{ s_1 (t_{21}+t_{22}) + (t_{11} t_{21} +  t_{12} t_{21} )\epsilon }
{(t_{11}+t_{12})(s_1 + t_{11}\epsilon)}\\
=&\frac{ s_1 (t_{21}+t_{22}) + (t_{11} t_{21} + t_{11} t_{22})\epsilon }
{(t_{11}+t_{12})(s_1 + t_{11}\epsilon)}
=\frac{ (t_{21}+t_{22})(s_1 + t_{11}\epsilon) }
{(t_{11}+t_{12})(s_1 + t_{11}\epsilon)}
=\frac{ t_{21}+t_{22}}{t_{11}+t_{12}} \in \ff{q}.
\end{split}
\]
Similarly, $y_{21}\nm y_{11}^{-1}$ can be shown to be in $\ff{q}$ in the other 3 combinations of square or non-square of $y_{11},y_{12}$. In any case, $\frac{x_2}{x_1}$ is in $\ff{q}$ from \eqref{eqn-onan}.
\end{proof}

We can now prove our main result of this section.
\begin{thm}\label{thmONan}
Let $U$ be a parabolic unital of order $q$ in $\NP$ with $(0,1,0)\in U$ and where
$\{\phi(1,1,0,t \epsilon) \, :\, t \in \ff{q} \} < \Aut(U)$.
If $U$ contains an O'Nan configuration through $(0,1,0)$, then one of the lines
in the configuration must be $[0,1,z]$ for some $z\in\N$.
\end{thm}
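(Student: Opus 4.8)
The plan is to argue by contradiction. Consider an O'Nan configuration in $U$ through $(0,1,0)$: it consists of four distinct lines meeting in six distinct unital points, each point on exactly two lines, so $(0,1,0)$ lies on exactly two of the lines, say $\ell_1$ and $\ell_2$. The unique tangent of $U$ at $(0,1,0)$ is the line at infinity $[0,0,1]$, which meets $U$ in only one point and hence cannot be a configuration line; so $\ell_1,\ell_2$ are secants through $(0,1,0)$, i.e.\ lines $[1,0,t]$ — the ``vertical'' lines $x=x_1$ and $x=x_2$ with $x_1\neq x_2$. Write $\ell_3,\ell_4$ for the other two configuration lines. Since every line $[1,0,t]$ passes through $(0,1,0)$ while $(0,1,0)$ lies only on $\ell_1,\ell_2$, neither $\ell_3$ nor $\ell_4$ has the form $[1,0,t]$. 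Suppose, for a contradiction, that neither $\ell_3$ nor $\ell_4$ is of the form $[0,1,z]$; then each has the form $[u,1,z]$ with $u\neq0$, and the six points may be labelled exactly as in the hypotheses of \Cref{lem-planeONan}: $(0,1,0)$, $\pnt P=(x,y,1)=\ell_3\cap\ell_4$, and $\pnt{P_{ij}}=(x_i,y_{ij},1)$, with $\ell_3$ through $\pnt{P_{11}},\pnt{P_{21}},\pnt P$ and $\ell_4$ through $\pnt{P_{12}},\pnt{P_{22}},\pnt P$. The inequalities demanded by \Cref{lem-planeONan} hold: $x_1\neq x_2$ since $\ell_1\neq\ell_2$; $x\neq x_i$ since $\pnt P\notin\ell_i$; $y_{i1}\neq y_{i2}$ because distinct points of a vertical line have distinct $y$-coordinates; and $y\neq y_{ij}$ because distinct points of an $[u,1,z]$-line with $u\neq0$ have distinct $y$-coordinates.

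Next I would invoke $K:=\{\phi(1,1,0,t\epsilon):t\in\ff q\}<\Aut(U)$. Each $\ell_i$, $i=1,2$, is a secant of $U$ through $(0,1,0)$, so it carries exactly $q$ affine unital points; $K$ fixes $(0,1,0)$, stabilises every vertical line, and preserves $U$, and since $\phi(1,1,0,t\epsilon)$ adds $t\epsilon$ to the $y$-coordinate while $t\mapsto t\epsilon$ is injective, the $q$ affine unital points on $\ell_i$ form a single $K$-orbit, a full additive coset of $\ff q\epsilon=\{z\in\ff{q^2}:\Tr z=0\}$ in the $y$-coordinate. In particular these points share the same trace of $y$-coordinate, so $\Tr(y_{11})=\Tr(y_{12})$ and $\Tr(y_{21})=\Tr(y_{22})$. \Cref{lem-planeONan} then forces $\dfrac{x-x_2}{x-x_1}\in\ff q$.

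To reach a contradiction from this, replace $U$ by $U^{\phi(1,1,-x,-y)}$; this is harmless, since $\phi(1,1,-x,-y)$ fixes $(0,1,0)$ and, $\grp T$ being abelian, conjugation by it fixes $K$ elementwise, so the new unital again satisfies the hypotheses. We may thus take $\pnt P=(0,0,1)$, so $\ell_3=[u_3,1,0]$ and $\ell_4=[u_4,1,0]$ with $u_3,u_4\neq0$. Put $c:=x_2/x_1$; then $c\in\ffs q$, $c\neq1$ (as $x_1\neq x_2$), and, $q$ being odd, $c\in\ffs q\subset\square_{q^2}$. Since $c\in\ff q$, \eqref{eqn-onan} and the final clause of \Cref{lem-collinear} give $y_{21}=c\,y_{11}$ and $y_{22}=c\,y_{12}$, i.e.\ $\pnt{P_{21}}=\psi(\pnt{P_{11}})$ and $\pnt{P_{22}}=\psi(\pnt{P_{12}})$, where $\psi:=\phi(c,c,0,0)$ acts on affine points by $(x,y,1)\mapsto(cx,cy,1)$. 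The collineation $\psi$ stabilises $\ell_3$ setwise, fixes the unital point $\pnt P=(0,0,1)$, and — like each $\psi^k$ with $0<k<d$, where $d$ is the multiplicative order of $c$ — fixes no other affine point. Granting that $\psi$ maps the $q$ unital points of $\ell_3\setminus\{\pnt P\}$ among themselves, $\psi$ permutes them freely, so $d\mid q$; as also $d\mid q-1$, we get $d=1$ and $c=1$, contradicting $c\neq1$. This contradiction shows $\ell_3$ or $\ell_4$ must be a line $[0,1,z]$.

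The one nontrivial point above, which I expect to be the main obstacle, is that $\psi=\phi(c,c,0,0)$ stabilises $U\cap\ell_3$. On the vertical line $x=\zeta$ the affine unital points form a coset $\tilde\eta(\zeta)+\ff q\epsilon$ for some value $\tilde\eta(\zeta)\in\ff q$, the point of $\ell_3$ over $\zeta$ is $(\zeta,-\zeta\nm u_3,1)$, and $(c\zeta)\nm u_3=c(\zeta\nm u_3)$ because $c\in\ff q$; so $\psi$ sends the $\ell_3$-point over $\zeta$ to the $\ell_3$-point over $c\zeta$ and multiplies the real part of its $y$-coordinate by $c$. Hence $\psi$ stabilises $U\cap\ell_3$ precisely when $\tilde\eta(c\zeta)=c\,\tilde\eta(\zeta)$ for every $\zeta$ with $(\zeta,-\zeta\nm u_3,1)\in U$, which we already know for $\zeta=x_1$ (from $\pnt{P_{21}}\in U$) and $\zeta=0$. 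Establishing it for all such $\zeta$ is where one must use that $U\cup\{(0,1,0)\}$ is genuinely a unital — equivalently, that for each $u\in\ff{q^2}$ the map $x\mapsto\tfrac12\Tr(x\nm u)+\tilde\eta(x)$ has exactly one fibre of size $1$ and $q-1$ fibres of size $q+1$ — together with the square/non-square dichotomy \eqref{Nmult} of $\nm$; alternatively one might sidestep this by running the argument over several configurations in the $K$- and $\hat{\grp R}_0$-orbits of the given one. I anticipate this verification is the heart of the matter.
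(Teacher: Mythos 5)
Your first half coincides with the paper's: you identify the two configuration lines through $(0,1,0)$ as verticals $[1,0,-x_i]$, use the hypothesis $\{\phi(1,1,0,t\epsilon)\}<\Aut(U)$ to get $\Tr(y_{11})=\Tr(y_{12})$ and $\Tr(y_{21})=\Tr(y_{22})$, and invoke \Cref{lem-planeONan} to conclude $\frac{x-x_2}{x-x_1}\in\ff{q}$. From there, however, your route has a genuine gap, and it is exactly the one you flag: nothing in the hypotheses lets you conclude that $\psi=\phi(c,c,0,0)$ stabilises $U\cap\ell_3$. The configuration only tells you that $\psi$ carries the two unital points $\pnt{P},\pnt{P_{11}}$ of $\ell_3$ to unital points of $\ell_3$; in your notation this is the relation $\tilde\eta(c\zeta)=c\,\tilde\eta(\zeta)$ at $\zeta=0$ and $\zeta=x_1$ only. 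Extending it to all $\zeta$ with $(\zeta,-\zeta\nm u_3,1)\in U$ is a global multiplicative constraint on $\tilde\eta$ across the vertical lines, whereas the only structural information available is the order-$q$ group $K$, which acts \emph{within} each vertical line and says nothing about how $\tilde\eta$ varies between them. The unital axioms (your reformulation via fibre sizes of $x\mapsto\tfrac12\Tr(x\nm u)+\tilde\eta(x)$) do not yield it either, and the suggested detour through $\hat{\grp{R}}_0$-orbits fails because $\hat{\grp{R}}_0$ need not preserve $U$. So the order-of-$c$ argument that would finish the proof never gets off the ground.

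The paper avoids manufacturing any new collineation and instead closes with a counting argument that you may want to compare. Let $S$ be the set of unital points on the $q$ secants joining $\pnt{P}$ to the $q$ unital points of $[1,0,-x_1]$ (all of which share the trace $\Tr(y_{11})$ by the $K$-action); then $|S|=q^2+1$. Setting $A=\{x_3:\frac{x-x_3}{x-x_1}\in\ff{q}\}$, which has size $q$, \Cref{lem-planeONan} shows that for $x_3\notin A$ the $q$ points where $[1,0,-x_3]$ meets this subpencil have $q$ distinct traces, so exactly one of them can be a unital point; this already accounts for $q^2-q$ points of $S$. Since $x_1,x_2\in A$ (the latter by the ratio computation above), the verticals $[1,0,-x_1]$ and $[1,0,-x_2]$ each contribute $q$ further points of $S$, and with $\pnt{P}$ one reaches $q^2+q+1>|S|$, a contradiction. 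If you want to salvage your own ending, you would need to replace the unproved stabilisation claim by something of this combinatorial nature; as written, the proof is incomplete.
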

\begin{proof}
Suppose $U$ contains an O'Nan configuration through $(0,1,0)$ and none of lines in the configuration are of the form $[0,1,z]$.
Let the coordinates of the points of the O'Nan configuration be as in
\Cref{lem-planeONan}.
Since $(x_i, y_{i1},1)$ is a unital point for $i=1,2$ and $\{\phi(1,1,0,t \epsilon) \, :\, t \in  \ff{q} \} < \Aut(U)$,
$(x_i, y_i,1)$ is a unital point if and only if $\Tr(y_i)=\Tr(y_{i1})$.
So $\Tr(y_{11})=\Tr(y_{12})$ and $\Tr(y_{21})=\Tr(y_{22})$.
By \Cref{lem-planeONan}, $\frac{x - x_2}{x - x_1}\in  \ff{q}$.

Let $S$ be the set of unital points on the lines in the subpencil from $\pnt{P}$ to the $q$ unital points $(x_1,y_1,1)$ where $\Tr(y_1)=\Tr(y_{11})$. So $S$ has $q^2+1$ points.
Let $A=\{x_3 \in \ff{q^2} : \frac{x - x_3}{x - x_1} \in  \ff{q}\}$.
By \Cref{lem-planeONan}, the second coordinate of the $q$ points in $[1,0,-x_3]\cap S$ have the same trace or $q$ different traces, depending whether $x_3$ is in $A$ or not.
Since $\{\phi(1,1,0,t \epsilon) \,:\, t \in  \ff{q} \} < \Aut(U)$,
if $x_3\notin A$, then $[1,0,-x_3]\cap S$ has exactly one unital point.
As $|A|=q$, this gives $q^2-q$ unital points in $S$.

Since $x_1$ and $x_2$ are in $A$, these $q^2-q$ points are distinct from the $2q$ points in $S$ which have the form
$(x_1, y_1,1)$ or $(x_2, y_2,1)$ where $\Tr(y_1)=\Tr(y_{11})$ and $\Tr(y_{2})=\Tr(y_{21})$.
Together with $\pnt{P}$, this gives at least $(q^2-q)+2q+1=q^2+q+1$ points in $S$, which is too many. Contradiction.
\end{proof}

\section{The sets $B(a,b)$ and the lines of $\NP$} \label{Babsets}

For $a,b\in\ff{q}$, we define
$$B(a,b)=\{(x,y,1) \in \NP \,:\, \Nm(x)=a \land \Tr(y)=b \}.$$
With the canonical form in \eqref{classicalU}, the classical unital is given by
$$\{ (0, 1, 0)\} \bigcup_{a\in \ff{q}} B(a,a).$$
This partition of unital points was studied by Taylor \cite{T-1974-ubd} and
Hui and Wong \cite{HW-2014-oeaub} for the extrinsic and intrinsic
characterizations, respectively, of a unitary polarity.
\begin{lem}\label{replacementlemma3}
The following statements hold in $\NP$.
\begin{enumerate}[label=(\roman*)]
\item Any line $[1,0,z]$ intersects $B(a,b)$ in $0$ or $q$
points.
\item For any $a\neq 0$, any line $[0,1,z]$ intersects $B(a,b)$ in $0$ or $q+1$
points.
\item Any line $[0,1,z]$ intersects $B(0,b)$ in $0$ or $1$
point.
\item Any line $[u,1,z]$, with $u\ne 0$, intersects $B(a,b)$ in $0, 1$ or $2$
points.
\end{enumerate}
\end{lem}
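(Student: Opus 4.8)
The plan is to handle the four cases by translating each incidence condition on a line into a statement about the fibers of the norm and trace maps, using the explicit forms of lines in $\NP$ and the nearfield multiplication \eqref{Nmult}. Recall that a point $(x,y,1)$ lies on $[s,u,t]$ iff $x\nm s+y\nm u+z\nm t=0$ in the coordinates written there. For part~(i), a line $[1,0,z]$ consists of the points $(x,y,1)$ with $x\nm 1 + z\nm 1\cdot 0 = 0$... more carefully, $(x,y,1)\,\Inc\,[1,0,z]$ iff $x+z=0$, i.e. $x=-z$ is fixed while $y$ ranges over all of $\N$. Such a point lies in $B(a,b)$ iff $\Nm(-z)=a$ and $\Tr(y)=b$. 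The first condition is independent of $y$: either $\Nm(z)\ne a$, giving no points, or $\Nm(z)=a$, in which case the intersection is $\{(-z,y,1):\Tr(y)=b\}$, and since $\Tr$ is $q$-to-$1$ from $\ff{q^2}$ onto $\ff{q}$ this has exactly $q$ points. That settles (i).

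For parts (ii) and (iii), a line $[0,1,z]$ consists of the points $(x,y,1)$ with $y + z = 0$ (again reading off the incidence relation), so $y=-z$ is fixed and $x$ ranges over $\N$; the point $(0,1,0)$ also lies on $[0,1,z]$ but is not of the form $(x,y,1)$, so it is irrelevant to $B(a,b)$. Membership in $B(a,b)$ requires $\Tr(-z)=\Tr(z)=b$ — a condition on the line alone — together with $\Nm(x)=a$. If $\Tr(z)\ne b$ the intersection is empty. If $\Tr(z)=b$ and $a\ne 0$, then $\{x\in\ff{q^2}:\Nm(x)=a\}$ has exactly $q+1$ elements since $\Nm$ is $(q+1)$-to-$1$ from $\ffs{q^2}$ onto $\ffs{q}$, giving (ii); if $\Tr(z)=b$ and $a=0$, then $\Nm(x)=0$ forces $x=0$, a single point, giving (iii).

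Part~(iv) is the substantive case. A line $[u,1,z]$ with $u\ne 0$ meets the affine part in points $(x,y,1)$ with $x\nm u + y + z = 0$, i.e. $y = -z - x\nm u$, so the intersection with $B(a,b)$ is parametrized by those $x\in\ff{q^2}$ with $\Nm(x)=a$ and $\Tr(x\nm u) = \Tr(-z-y)\ldots$ — precisely $\Tr(x\nm u + z) = 0$, i.e. $\Tr(x\nm u) = -\Tr(z)$, call this constant $b'$ (absorbing $b$). So we must bound the number of $x$ on the norm-$a$ fiber satisfying a single linear-trace condition $\Tr(x\nm u)=b'$. The key point is that $x\mapsto x\nm u$ is a bijection of $\ff{q^2}$ that multiplies norms by the constant $\Nm(u)$, so it carries the norm-$a$ circle $\{\Nm(x)=a\}$ onto the norm-$(a\Nm(u))$ circle; and on each of the two square/nonsquare pieces of $\ffs{q^2}$ the map $x\mapsto x\nm u$ is either $x\mapsto xu$ or $x\mapsto x^q u$, both $\ff{q}$-semilinear. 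Thus after the change of variable the problem becomes: how many points of the conic $\Nm(w)=a\Nm(u)$ (an irreducible conic, or a point/empty if the norm value is $0$) satisfy an affine-linear equation $\Tr(w)=b'$, where however the relevant linear form may differ on the two halves because of the Frobenius twist. Writing $w = w_1 + w_2\epsilon$, the circle is $w_1^2 - \beta w_2^2 = a\Nm(u)$, a nondegenerate conic in $(w_1,w_2)$, and the trace condition $\Tr(w)=2w_1=b'$ is a line; a line meets a nondegenerate conic in at most $2$ points over any field. The only subtlety is the Frobenius twist: on the nonsquare half, $x^q\nm$-style, $\epsilon^q=-\epsilon$ flips the sign of $w_2$, but this does not change the conic $w_1^2-\beta w_2^2 = \text{const}$ nor the line $2w_1 = b'$, so the count of at most $2$ is unaffected. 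I expect this bookkeeping with the square/nonsquare split and the Frobenius twist to be the main obstacle; once one checks that the twist leaves both the conic and the linear condition invariant, the conclusion "$0$, $1$, or $2$ points" is immediate from the geometry of a line meeting a conic.
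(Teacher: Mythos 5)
Your proposal is correct and follows essentially the same route as the paper: parametrize each line explicitly, translate membership in $B(a,b)$ into conditions on the fibers of $\Nm$ and $\Tr$, and for (iv) use the bijection $x\mapsto x\nm u$ to reduce to counting solutions of $\Nm(w)=c_1$, $\Tr(w)=c_2$, which (writing $w=w_1+w_2\epsilon$) pins down $w_1$ and leaves a quadratic in $w_2$ with $0$, $1$, or $2$ roots --- exactly the paper's computation. Two harmless slips worth noting: in (ii)/(iii) one has $\Tr(-z)=-\Tr(z)$ rather than $\Tr(z)$ (irrelevant to the count), and in (iv) the map $x\mapsto x\nm u$ is globally $x\mapsto xu$ or $x\mapsto x^qu$ according to whether $u\in\square_{q^2}$, so the square/nonsquare split in $x$ that you guard against does not actually arise.
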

\begin{proof}
Firstly, all points on the line $[1,0,z]$ are of the form $(-z,y,1)$ with
$y\in\N$. Clearly $[1,0,z]$ does not intersect $B(a,b)$ if $a\ne\Nm(-z)$.
If $a=\Nm(-z)$, then there are $q$ choices of $y\in\N$ for which
$\Tr(y)=b$. A similar argument deals with the lines $[0,1,z]$.

Now consider a line $[u,1,z]$ with $u\ne 0$. The set of points of the form
$(x,y,1)$ lying on $[u,1,z]$ is given by
$$\{ (x,-z-x\star u,1) \,:\, x\in\N\}.$$
Suppose $(x,-z-x\star u,1)\in B(a,b)$. Then
$\Nm(x)=a$ and $\Tr(-z-x\star u)=b$.
We may rewrite the trace identity as $\Tr(u'x)=b+\Tr(z)$, where
$u'$ is $-u$ or $-u^q$ depending on whether $u\in\square_{q^2}$.
Set $x'=u'x$. Then we have the two equations
$\Tr(x')=b+\Tr(z)$ and $\Nm(u'^{-1}x')=\Nm(u')^{-1}\Nm(x')=a$.
Thus, to count the number of points in $[u,1,z]\cap B(a,b)$ it is enough to
count the number of solutions $x\in\ff{q^2}$ of $\Nm(x)=c_1$ and
$\Tr(x)=c_2$ for fixed $c_i\in\ff{q}$.
This is a well-known problem in finite fields. For want of a reference, we
now show that this system has 0, 1 or 2 solutions.
Using the representation $\ff{q^2}=\ff{q}(\epsilon)$, we
set $x=x_1+x_2\epsilon$, with $x_i\in\ff{q}$, and have the following system
of 2 equations in $x_1,x_2$:
\begin{align*}
x_1^2-x_2^2 \epsilon^2 &=c_1\\
2x_1 &=c_2.
\end{align*}
Clearly, we may solve for $x_1$ uniquely, leaving a quadratic over $\ff{q}$
in $x_2$, which has 2, 1, or 0 solutions if
$c_2^2-4c_1$ is in $\nonsquare_q$, 0, or in $\square_q$, respectively.
\end{proof}

\section{The all-ones polynomial over $\ff{q}$ -- an interlude}

For any $k\in\nat$, define $h_k\in\ffx{q}$ to be $h_k(X)=1+X+\cdots+X^k$.
For $x\ne 1$, it is clear $h_k(x)=(x^{k+1}-1)/(x-1)$. However, determining
necessary and sufficient conditions for when $h_k$ is a permutation polynomial
over $\ff{q}$ remained open for many years. It was only in 1994 that
Matthews resolved the problem.
\begin{lem}[{\cite{M-1994-ppotp}}, Theorem 1.1] \label{rexlem}
If $q=p^e$ is odd, $p$ a prime, then $h_k$ is a permutation polynomial over
$\ff{q}$ if and only if $k\equiv 1 \bmod (p(q-1))$.
\end{lem}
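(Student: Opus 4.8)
The plan is to treat the two implications separately; the ``if'' direction is immediate, and the ``only if'' direction carries essentially all the content. For ``if'', write $k+1=2+p(q-1)m$. Every nonzero $x\in\ff q$ has $x^{q-1}=1$, so $x^{k+1}=x^2\bigl(x^{q-1}\bigr)^{pm}=x^2$ and hence $h_k(x)=(x^{k+1}-1)/(x-1)=x+1$ for $x\ne 0,1$; together with $h_k(0)=1$ and $h_k(1)=k+1\equiv 2\pmod p$, this shows $h_k$ induces the map $x\mapsto x+1$ on $\ff q$, which is a permutation.

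For ``only if'' I would proceed by a sequence of reductions. First, the function $\ff q\to\ff q$ induced by $h_k$ depends only on $k$ modulo $p(q-1)$: the value at $0$ is always $1$, the value at $1$ is $k+1$ and depends on $k\bmod p$, the value at $x\ne 0,1$ is $(x^{k+1}-1)/(x-1)$ and depends on $k\bmod(q-1)$, and $\gcd(p,q-1)=1$. So I may assume $0\le k<p(q-1)$ and must show $h_k$ permutes $\ff q$ only when $k=1$. Next, letting $s$ be the residue of $k+1$ modulo $q-1$ (so $0\le s\le q-2$), the map $h_k$ agrees on $\ff q\setminus\{1\}$ with $1+X+\cdots+X^{s-1}$; the cases $s\in\{0,1\}$ are degenerate (the function is then constant off $\{0,1\}$, hence not injective once $q\ge 4$, and $q=3$ I would check directly), so I may take $2\le s\le q-2$, and then $h_k$ is a permutation iff $1+X+\cdots+X^{s-1}$ is injective on $\ff q\setminus\{1\}$ with its single omitted value equal to $h_k(1)=(k+1)\bmod p$. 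Counting preimages of $0$ and of $1$ (governed by the roots of $x^s=1$ and of $x^{s-1}=1$, together with $x=0$ for the value $1$) shows injectivity on $\ff q\setminus\{1\}$ forces $\gcd(s,q-1)\le 2$ and $\gcd(s-1,q-1)=1$, and since $q-1$ is even this compels $s$ to be even with $\gcd(s,q-1)=2$. Moreover the identity $\sum_{x\in\ff q\setminus\{1\}}(1+x+\cdots+x^{s-1})=-s$ in $\ff q$ (each inner power sum $\sum_x x^j$ vanishing for $0\le j\le s-1\le q-3$) forces the omitted value to equal $s\bmod p$, so $k+1\equiv s$ modulo both $p$ and $q-1$, hence $k=s-1\le q-3$. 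This leaves a single claim: for $3\le n\le q-3$ with $n$ odd, $\gcd(n,q-1)=1$, and $\gcd(n+1,q-1)=2$, the polynomial $1+X+\cdots+X^n$ does not permute $\ff q$.

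This residual claim is the main obstacle — the parity and gcd conditions alone do not suffice (over $\ff{11}$, for instance, $n=3$ satisfies all of them, yet $1+X+X^2+X^3$ is already non-injective on $\ff{11}\setminus\{1\}$, since $h_3(2)=h_3(7)=4$). I would settle it with Hermite's criterion. Putting $f=1+X+\cdots+X^n$ and substituting $X=Y+1$ gives $f(Y+1)=\sum_{j=0}^{n}\binom{n+1}{j+1}Y^j$, so that for $1\le t\le q-2$ the power moment $\sum_{x\in\ff q}f(x)^t$ equals, up to the sign $-1$, the sum of the coefficients of $Y^{q-1},Y^{2(q-1)},\dots$ in $f(Y+1)^t$; choosing $t=\lceil (q-1)/n\rceil$ (so $q-1\le nt<2(q-1)$ and only the $Y^{q-1}$ term contributes), this coefficient is an explicit sum of products of binomial and multinomial coefficients, and I expect the constraints $\gcd(n,q-1)=1$ and $\gcd(n+1,q-1)=2$ to force it to be nonzero modulo $p$, contradicting Hermite's criterion. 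Making this binomial book-keeping work uniformly in $n$ and $q$ is the delicate part, and is essentially the content of Matthews' original argument. (A Weil-bound argument applied to the curve $(X^{n+1}-1)(Y-1)=(Y^{n+1}-1)(X-1)$ with the diagonal $X=Y$ removed produces a collision $h_n(x)=h_n(y)$ with $x\ne y$ once $q$ is large compared with $n$, but this only covers $n=O(q^{1/4})$, so Hermite's criterion is needed for the whole range.)
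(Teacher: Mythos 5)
This lemma is imported verbatim from Matthews (\cite{M-1994-ppotp}, Theorem 1.1); the paper gives no proof of it, so there is no internal argument to compare yours against. Judged on its own terms, your ``if'' direction is complete and correct, and your chain of reductions for ``only if'' is also sound: the induced function depends only on $k \bmod p(q-1)$; counting preimages of $0$ and $1$ forces $\gcd(s,q-1)=2$ and $\gcd(s-1,q-1)=1$ with $s$ even; and the power-sum identity $\sum_{x\ne 1}h_{s-1}(x)=-s$ correctly pins the omitted value to $s \bmod p$ and hence forces $k=s-1\le q-3$. These steps are all verifiable and you state them accurately.

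However, the proof is not complete. Everything you have done reduces the theorem to the claim that for odd $n$ with $3\le n\le q-3$, $\gcd(n,q-1)=1$ and $\gcd(n+1,q-1)=2$, the polynomial $1+X+\cdots+X^n$ is not a permutation polynomial --- and that claim \emph{is} the theorem: all the elementary necessary conditions have been exhausted (as your own $\ff{11}$, $n=3$ example shows, they do not suffice), and what remains is exactly the hard part of Matthews' argument. Your proposed route via Hermite's criterion with $t=\lceil (q-1)/n\rceil$ correctly isolates a single coefficient of $Y^{q-1}$ in $f(Y+1)^t$, but the assertion that this coefficient is nonzero modulo $p$ under the stated gcd constraints is offered only as ``I expect,'' with no argument; nothing in your reductions makes that nonvanishing evident, and the multinomial bookkeeping is genuinely delicate (the Weil-bound fallback you mention covers only $n=O(q^{1/4})$, as you note). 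So the proposal establishes the easy direction and a correct framework for the converse, but leaves its essential content unproved. Since the paper uses this result as a black-box citation, the honest course is to do the same rather than present the sketch as a proof.
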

It turns out we shall also be interested in the behavior of $h_k$, but we will
need to know slightly more about the case where $h_k$ is not a permutation
polynomial.
If a non-constant polynomial $f\in\ffx{q}$ is known to not be a permutation
polynomial over $\ff{q}$ and $\deg(f)\le q-1$, then Wan \cite{W-1993-aalla}
showed
\begin{equation} \label{wanbound}
|f(\ff{q})| \le \left\lfloor q - \frac{q-1}{\deg(f)}\right\rfloor.
\end{equation}
This bound can be used to prove the following statement regarding the all-ones
polynomial.
\begin{lem} \label{allonespoly}
Set $E=\ff{q}\setminus\{0,1\}$.
If $1<k<q-1$ and $\gcd(k,q-1)=1$, then
there exists $c_1,c_2\in E$ such that
$h_k(c_1)=h_k(c_2)$.
\end{lem}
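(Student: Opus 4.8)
The plan is to argue by contradiction: suppose $h_k$ is injective on $E=\ff{q}\setminus\{0,1\}$, and derive a contradiction with the Wan bound \eqref{wanbound}. Since $\gcd(k,q-1)=1$ and $1<k<q-1$, Matthews' criterion (\Cref{rexlem}) tells us $h_k$ is \emph{not} a permutation polynomial of $\ff{q}$ (the congruence $k\equiv 1 \bmod p(q-1)$ fails, as $1<k<q-1<p(q-1)$). So the Wan bound applies directly to $h_k$, which has degree $k\le q-2$: we get
\begin{equation*}
|h_k(\ff{q})| \le \left\lfloor q - \frac{q-1}{k}\right\rfloor \le q-2,
\end{equation*}
the last inequality because $k\le q-2$ forces $(q-1)/k > 1$, hence $q - (q-1)/k < q-1$, and the floor is then at most $q-2$.

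Now I would compare this with the injectivity assumption. If $h_k$ is injective on $E$, then the image $h_k(E)$ has exactly $|E| = q-2$ elements. Together with the two values $h_k(0)=1$ and $h_k(1)=k$ (reduced mod $p$), we would have $|h_k(\ff{q})| \ge |h_k(E)| = q-2$, and combined with the Wan bound this forces $|h_k(\ff{q})| = q-2$ exactly, and moreover $h_k(0), h_k(1) \in h_k(E)$, i.e. the collisions of $h_k$ on $\ff{q}$ all involve $0$ or $1$. So the remaining task is to rule out the scenario where $h_k$ is injective on $E$ but $h_k(0)$ or $h_k(1)$ coincides with some $h_k(c)$, $c\in E$.

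The main obstacle is precisely excluding this boundary scenario, since the Wan bound alone is not quite sharp enough. To handle it I would look more carefully at which elements can collide. The value $h_k(0)=1$: a collision $h_k(c)=1$ with $c\ne 0,1$ means $\frac{c^{k+1}-1}{c-1}=1$, i.e. $c^{k+1}=c$, i.e. $c^k=1$ (as $c\ne 0$); since $\gcd(k,q-1)=1$ this forces $c=1$, a contradiction. So in fact $h_k(0)\notin h_k(E)$, and the image of $h_k$ on $\ff{q}$ has size $\ge |h_k(E)| + 1$. If $h_k$ were injective on $E$ this would give $|h_k(\ff{q})|\ge (q-2)+1 = q-1$, contradicting the Wan bound $|h_k(\ff{q})|\le q-2$. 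This completes the argument, and the value at $1$ never even needs to be separated out. I would double-check the edge case $p\mid k$ (so $h_k(1)=0$ in $\ff q$) causes no trouble — it does not, since the argument only used $h_k(0)=1$ — and also verify the degree bound $k\le q-2$ is exactly what is hypothesized, which it is since $1<k<q-1$.
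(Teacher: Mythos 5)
Your proposal is correct and follows essentially the same route as the paper: Matthews' criterion to see $h_k$ is not a permutation polynomial, Wan's bound to get $|h_k(\ff{q})|\le q-2$, and the observation that $h_k(c)=1$ with $c\neq 1$ forces $c^k=1$ and hence $c=0$ (using $\gcd(k,q-1)=1$), so that $1\notin h_k(E)$. The paper phrases the conclusion as a direct pigeonhole count ($|h_k(E)|\le q-3 < |E|$) rather than your contradiction with an assumed injectivity on $E$, but these are the same argument.
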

\begin{proof}
Clearly $k\not\equiv 1\bmod (p(q-1))$, so that $h_k$ is not a permutation
polynomial over $\ff{q}$ by \Cref{rexlem}. Since $k<q-1$, Wan's
bound, given in \Cref{wanbound}, shows
$$|h_k(\ff{q})| \le q-2.$$

Suppose $x\ne 1$ satisfies $h_k(x)=1$. As we have
\begin{equation*}
h_k(x) = \frac{x^{k+1}-1}{x-1},
\end{equation*}
we see $h_k(x)=1$ if and only if $x=0$ or $x^k=1$. Since $\gcd(k,q-1)=1$, there
are no solutions to $x^k=1$ with $x\ne 1$.
Thus, there are at most 2 pre-images of 1
under $h_k$, namely $x=0,1$. Thus, $1\in h_k(\ff{q})$ and $1\notin h_k(E)$,
which proves
$$|h_k(E)| \le q-3.$$
Since $|E|=q-2$, we must have two elements $c_1,c_2\in E$ with the same
image under $h_k$, as claimed.
\end{proof}

\section{Unitals containing $(0,1,0)$ in $\NP$ with the largest possible linear
collineation group}

We see from \Cref{restrictionthm} and \Cref{restrictionthm1y0} that the
largest possible linear collineation group size for a unital in $\NP$ is
$q(q^2-1)$, and this comes from the case outlined in \Cref{restrictionthm}.
In this section we shall concentrate on this case.
Our goal is to show that the only unitals in $\NP$ with an automorphism group
of order $q(q^2-1)$ are those determined by Wantz.

We start with some immediate consequences of the assumption that the
automorphism group of the unital is as large as possible.

\begin{lem} \label{CDWlem}
Let $U, G, H, W, C, D$ and $\delta$ be as in \Cref{restrictionthm}.
Assume $o(G)=q(q^2-1)$. Then
$C=\cyc{\N^\star,\star}=\square_{q^2}^\star\cup\square_{q^2}^\star \gen$ ,
$D=\ffs{q}$, and
$W=\ff{q}h$ for some $h\in\ffs{q^2}$.
\end{lem}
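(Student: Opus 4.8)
The plan is to exploit the divisibility constraint $r\mid(q+1)$ from \Cref{restrictionthm}(ix) together with the embedding $D<\ffs{q}$; these will simultaneously pin down the orders of $C$ and $D$ and the scalar field $\ff{q_0}$ of $W$. First consider $C$. Since $o(G)=q(q^2-1)$, \Cref{restrictionthm}(vi) gives $o(G_1)=o(G)/q=q^2-1$, and then $o(C)=o(G_1)=q^2-1$ by \Cref{restrictionthm}(vii). As $C$ is a subgroup of $\cyc{\N^\star,\star}$, which has order $q^2-1$, we conclude $C=\cyc{\N^\star,\star}$. The identity $\cyc{\N^\star,\star}=\square_{q^2}^\star\cup\square_{q^2}^\star\gen$ is just the observation preceding \Cref{subgroups}: $\square_{q^2}^\star$ has index two in $\ffs{q^2}$, and the primitive element $\gen$ is a nonsquare, so $\square_{q^2}^\star\gen$ is the coset of nonsquares.

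Next I would pin down $D$. By \Cref{restrictionthm}(ix), $\delta\colon C\to D$ is an onto $r$-to-one homomorphism with $r=o(C)/o(D)$ and $r\mid(q+1)$. Writing $q+1=rs$, this gives $o(D)=(q^2-1)/r=(q-1)s$, so $q-1$ divides $o(D)$. On the other hand $D<\ffs{q}$ by \Cref{restrictionthm}(viii), so $o(D)$ divides $q-1$. Hence $o(D)=q-1$ (and incidentally $r=q+1$), and since $\ffs{q}$ is cyclic of order $q-1$ with a unique subgroup of that order, $D=\ffs{q}$.

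It remains to identify $W$. By \Cref{restrictionthm}(viii), $D$ is a subgroup of $\ffs{q_0}$, where $\ff{q_0}$ is the scalar field of $W$ and is a subfield of $\ff{q}$. From $o(D)=q-1$ we get $q-1\le q_0-1$, i.e.\ $q\le q_0$; together with $q_0\le q$ this forces $q_0=q$. Thus $W$ is a vector space over $\ff{q}$ of size $q$, hence one-dimensional over $\ff{q}$, so $W=\ff{q}h$ for any $h\in W\setminus\{0\}$.

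Since each step is a short deduction from \Cref{restrictionthm}, I do not expect a genuine obstacle; the only delicate point is invoking the correct items of that theorem — in particular recognizing that $r\mid(q+1)$ is precisely what is needed to run the squeeze $(q-1)\mid o(D)$ and $o(D)\mid(q-1)$, which collapses $D$ (and hence $q_0$ and $W$) to the maximal case.
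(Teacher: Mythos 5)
Your proposal is correct and follows essentially the same route as the paper: obtain $o(C)=q^2-1$ from parts (vi) and (vii) of \Cref{restrictionthm}, then squeeze $o(D)$ between the divisibility conditions $r\mid(q+1)$ from (ix) and $o(D)\mid(q-1)$ from (viii) to force $r=q+1$ and $D=\ffs{q}$, and finally use the scalar-field containment to get $W=\ff{q}h$. The only cosmetic difference is that the paper phrases the squeeze as $r\ge(q^2-1)/(q-1)=q+1$ rather than as $(q-1)\mid o(D)$, but the content is identical.
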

\begin{proof}
Since $o(G)=q(q^2-1)=q\, o(C)$ by \Cref{restrictionthm}(vi) and (vii), we have
$o(C)=q^2-1$, and since $C$ is a
subgroup of the multiplicative group of $\N$, $C=\cyc{\N^\star,\star}$ is
forced. By \Cref{restrictionthm} (ix), $r=o(C)/o(D)$ must divide $q+1$.
Since $o(D)|(q-1)$ by \Cref{restrictionthm} (viii), we have
$r\ge (q^2-1)/(q-1) = q+1$, and so $r=q+1$. This forces $D=\ffs{q}$, and
since $D$ is contained in the scalar field of $W$, $W=\ff{q}h$ now follows.
\end{proof}
We next wish to examine the form of $\delta$.
\begin{lem} \label{deltalem}
Let $U, G, H, W, C, D$ and $\delta$ be as in \Cref{restrictionthm}.
Assume $o(G)=q(q^2-1)$.
The following statements hold.
\begin{enumerate}[label=(\roman*)]
\item If $q\equiv 3\bmod 4$, then either
\begin{itemize}
\item $\delta(c)=c^{j(q+1)}$ for some $1\le j<q-1$ satisfying
$\gcd(j,(q-1)/2)=1$, or
\item the restriction of the homomorphism $\delta$ to
$c\in\square_{q^2}$ is given by
$\delta(c)=c^{j(q+1)/2}$ for some $1\le j<q-1$ satisfying
$\gcd(j,q-1)=1$.
\end{itemize}
\item If $q\equiv 1\bmod 4$, then
$\delta(c)=c^{j(q+1)}$ for some $1\le j<q-1$ satisfying
$\gcd(j,(q-1)/2)=1$.
\end{enumerate}
\end{lem}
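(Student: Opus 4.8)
The plan is to read off the shape of $\delta$ from pure group theory, via \Cref{CDWlem} and \Cref{CDrelation}, and then to distinguish the two possible outcomes by a single quadratic-residue computation with $\delta(\gen)$.

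First I would collect the ingredients. Under the hypothesis $o(G)=q(q^2-1)$, \Cref{CDWlem} gives $C=\cyc{\N^\star,\star}=\square_{q^2}^\star\cup\square_{q^2}^\star\gen$ (which is non-cyclic) and $D=\ffs{q}$ (which is cyclic); since $(q-1)\mid(q^2-1)/2$ we have $D\le\square_{q^2}^\star\le\cyc{\N^\star,\star}$. By \Cref{restrictionthm}(viii) and (ix), $\delta\colon C\to D$ is an onto homomorphism with $o(C)=(q+1)\,o(D)$. Hence \Cref{CDrelation}(ii) applies with $G=C$, $H=D$, $r=q+1$, $S_l=\square_{q^2}^\star$, $h=\gen$ (note $\gen^{q+1}=\beta\in\square_{q^2}^\star$), and it separates the analysis into case (a), where $\delta(\square_{q^2}^\star)=D$, and case (b), where $\delta(\square_{q^2}^\star)\neq D$.

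In case (b), \Cref{CDrelation}(ii)(b) yields $\delta(x)=x^{j(q+1)}$ on $\square_{q^2}^\star$ with $1\le j<(q-1)/2$ and $\gcd(j,(q-1)/2)=1$, and $[D:\delta(\square_{q^2}^\star)]=2$, so $\delta(\square_{q^2}^\star)=\square_q^\star$ and $\delta(\gen)\in\nonsquare_q$. Applying $\delta$ to $\gen\star\gen=\gen^{q+1}=\beta$ and using $\beta^q=\beta$ gives $\delta(\gen)^2=\delta(\beta)=\beta^{j(q+1)}=\beta^{2j}$, so $\delta(\gen)=\pm\beta^{j}$; since $-1=\gen^{(q^2-1)/2}$, in either case $\delta(\gen)=\gen^{j'(q+1)}$ with $j'=j$ or $j'=j+(q-1)/2$, both satisfying $1\le j'<q-1$ and $\gcd(j',(q-1)/2)=1$. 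Now $x\mapsto x^{j'(q+1)}=\Nm(x)^{j'}$ is a $\star$-homomorphism $C\to\ffs{q}$ that agrees with $\delta$ on $\square_{q^2}^\star$ (because $x^{(q^2-1)/2}=1$ there, so $x^{j'(q+1)}=x^{j(q+1)}$) and at $\gen$; since $C=\cyc{\square_{q^2}^\star,\gen}$, we conclude $\delta(c)=c^{j'(q+1)}$ for all $c\in C$, which is the first alternative.

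In case (a), \Cref{CDrelation}(ii)(a) gives $\delta(x)=x^{j(q+1)/2}$ on $\square_{q^2}^\star$ with $1\le j<q-1$ and $\gcd(j,q-1)=1$; since $q-1$ is even, $j$ is odd. Then $\delta(\gen)\in\ffs{q}$ and
\[
\delta(\gen)^2=\delta(\beta)=\beta^{j(q+1)/2}=\bigl(\beta^{(q+1)/2}\bigr)^{j}=(-\beta)^{j}=-\beta^{j},
\]
the last equality because $j$ is odd. Hence $-\beta^{j}$ is a nonzero square of $\ff{q}$; but $\beta\in\nonsquare_q$ and $j$ odd force $\beta^{j}\in\nonsquare_q$, so $-1\in\nonsquare_q$, i.e., $q\equiv 3\bmod 4$. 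Consequently, if $q\equiv 1\bmod 4$ then case (a) is impossible, only case (b) occurs, and (ii) follows; if $q\equiv 3\bmod 4$, both cases survive, giving the two alternatives of (i). I expect the main obstacle to be the case (a) computation above — pinning $\delta(\gen)^2$ to $\delta(\beta)$, reducing $\beta^{j(q+1)/2}$ to $-\beta^{j}$, and extracting the obstruction $-1\in\nonsquare_q$ — together with the (secondary) bookkeeping in case (b) of lifting the $\square_{q^2}^\star$-formula to all of $C$, which must be handled with care since the nearfield and field powers of a non-square differ.
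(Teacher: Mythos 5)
Your proposal is correct and follows essentially the same route as the paper: both invoke \Cref{CDWlem} to put $\delta$ in the framework of \Cref{CDrelation}(ii), then evaluate $\delta(\gen\star\gen)=\delta(\gen^{q+1})$ two ways to pin down the exponent in the full-image case and to rule out the half-exponent case when $q\equiv 1\bmod 4$. The only cosmetic differences are that you phrase the latter obstruction via quadratic residues ($-\beta^{j}$ cannot be a square when $q\equiv 1\bmod 4$) where the paper argues via invertibility of exponents modulo $q-1$ — the same parity fact — and that you make explicit the (correct) generating-set argument extending the formula $c\mapsto\Nm(c)^{j'}$ from $\square_{q^2}^\star\cup\{\gen\}$ to all of $C$, a step the paper leaves implicit.
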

\begin{proof}

By \Cref{CDWlem},
$C=\cyc{\N^\star,\star}=\square_{q^2}^\star\cup\square_{q^2}^\star \gen$, so
that the form of $\delta$ follows from \Cref{CDrelation} (ii).

First let us suppose that for $c\in\square_{q^2}^\star$, we have
$\delta(c)=c^{j(q+1)/2}$ for some $1\le j<q-1$ satisfying
$\gcd(j,q-1)=1$.
We know that $\delta:C\rightarrow D$ is a homomorphism.
Set $\gn=\gen^{q+1}$, which is a generator of $\ffs{q}$.
Now, $\delta(\gen)=\gn^i$ for some integer $1\le i<q-1$. We therefore have
\begin{equation*}
\delta(\gen\star\gen) =\delta(\gen)^2 =\gn^{2i}.
\end{equation*}
On the other hand $\gen\star\gen=\gen^{q+1}=\gn$, so that
\begin{equation*}
\delta(\gen\star\gen) =\delta(\gen^{q+1}) =\gn^{j(q+1)/2}.
\end{equation*}
We thus conclude that $2i\equiv j(q+1)/2\bmod (q-1)$.
However, if $q\equiv 1\bmod 4$, then $\gcd((q+1)/2,q-1)=1$, and so
$j(q+1)/2$ has an inverse mod $q-1$, whereas $2i$ does not. So,
$q\equiv3\bmod 4$ is forced in this case.

Now suppose that for $c\in\square_{q^2}^\star$, we have
$\delta(c)=c^{j(q+1)}$ for some $1\le j<(q-1)/2$ satisfying
$\gcd(j,(q-1)/2)=1$.
Proceeding as above, we find
$\gn^{2i}=\gn^{j(q+1)}=(\gn^{q+1})^j=(\gn^2)^j=\gn^{2j}$.
Thus, $2i\equiv 2j\bmod (q-1)$, or equivalently,
$i\equiv j\bmod ((q-1)/2)$.
Since $j<(q-1)/2$, we see $i=j$ or $i=j+(q-1)/2$.
If $i=j$, then there is nothing further to prove.
If $i=j+(q-1)/2$, then it is easily checked that for $c\in\square_{q^2}$,
$c^{j(q+1)}=c^{i(q+1)}$. We can therefore set $\delta(c)=c^{i(q+1)}$ again.
\Cref{restrictionthm} (iii) shows that in either case we can extend  the
definition of $\delta$ to include 0, and the result is proved.
\end{proof}
The above two lemmas now give us the form of the unital.
\begin{thm} \label{unitalform}
Let $U, G, H, W, C, D$ and $\delta$ be as in \Cref{restrictionthm}
and assume $o(G)=q(q^2-1)$.
Then
$$U=\{(x,b\, \delta(x)+w,1)\,:\, x\in\ff{q^2}\land w\in W\}\cup\{(0,1,0)\}$$
for some $b\in\ffs{q^2} \setminus W$.
\end{thm}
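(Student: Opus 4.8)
The plan is to reconstruct the unital $U$ point-by-point by exploiting the transitive actions established in \Cref{restrictionthm}. We already know $(0,1,0)\in U$ by \Cref{restrictionthm}(i), and that $U\cap[1,0,0]=\{(0,w,1):w\in W\}$ by \Cref{restrictionthm}(iii). So the only thing left is to describe $U_A$, the $q^3-q$ affine unital points not on $[1,0,0]$. First I would recall from the proof of \Cref{restrictionthm}(vii) that $G$ acts on $U_A$ with $r$ orbits, each of size $o(G)=q(q^2-1)$; but under the assumption $o(G)=q(q^2-1)$ and $o(C)=q^2-1$ (by \Cref{CDWlem}), we have $r=o(C)/o(D)=(q^2-1)/(q-1)=q+1$. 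Hence there are exactly $q+1$ orbits of $G$ on $U_A$, each of size $q(q^2-1)$, matching the count $|U_A|=q^3-q=(q+1)\cdot q(q^2-1)/(q+1)$... — more precisely $q^3-q = q(q-1)(q+1)$ and $q(q^2-1)=q(q-1)(q+1)$, so in fact \emph{$G$ acts transitively on $U_A$} when $o(G)=q(q^2-1)$. Wait: $|U_A|=q^3-q = q(q^2-1) = o(G)$, so there is a single orbit and $G$ is \emph{regular} on $U_A$. This is the key simplification.

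With $G$ regular on $U_A$, I would pick one point $(1,b,1)\in U_A$ — noting $(1,0,1)\notin U$ is not forced, so instead pick the unital point of the form $(1,b,1)$ lying on the secant $[1,0,-1]$ (which meets $U_A$ in $q$ points by \Cref{restrictionthm}, all in one $H$-orbit, so differing by elements of $W$; choose the representative so the second coordinate is $b$ with $b\notin W$, which is possible since exactly one residue class works and $(1,w,1)\in U$ would put too many points on $[1,0,0]$'s image — more carefully, $b\in\ffs{q^2}\setminus W$ because if $b\in W$ then applying a translation in $H$ gives $(1,0,1)\in U$, and then the orbit structure forces a contradiction with $|U\cap[1,0,0]|=q$). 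Then every point of $U_A$ is the image $(1,b,1)^{\phi(c,d,0,w)}$ for a unique $\phi(c,d,0,w)\in G=HG_1$. Computing this image using the formula for $\phi$: $(1,b,1)\mapsto (1\star c, b\star d+w,1) = (c, b\star d+w,1)$. As $c$ ranges over $C=\langle\N^\star,\star\rangle=\ffs{q^2}$ (as a set) and $w$ over $W$, and $d=\delta(c)$, we get $U_A=\{(c, b\star\delta(c)+w,1):c\in\ffs{q^2},w\in W\}$. Finally, observe $b\star\delta(c)=b\,\delta(c)$ since $\delta(c)\in D\subseteq\ffs{q}\subseteq\square_{q^2}$ (nearfield multiplication by a square is ordinary multiplication), and adjoin the point $c=0$: when $x=0$, $b\,\delta(0)+w=w$, recovering exactly $U\cap[1,0,0]=\{(0,w,1):w\in W\}$. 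This gives $U=\{(x,b\,\delta(x)+w,1):x\in\ff{q^2}\land w\in W\}\cup\{(0,1,0)\}$.

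The main obstacle I anticipate is the careful bookkeeping showing $b\notin W$ and, relatedly, that $\delta$ extends to $0$ consistently — this is where \Cref{restrictionthm}(iii) and the precise description of $U\cap[1,0,0]$ must be invoked, and one has to rule out the degenerate possibility that the chosen representative has second coordinate in $W$. A secondary point requiring care is verifying that the map $\phi(c,d,0,w)\mapsto(1,b,1)^{\phi(c,d,0,w)}$ is genuinely a bijection onto $U_A$ (injectivity follows from regularity of $G$ on $U_A$, which in turn follows from the orbit-size computation above, but one should double-check $G$ fixes no point of $U_A$ — this was already noted in the proof of \Cref{restrictionthm}(vii), so it can be cited). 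Everything else is a direct substitution into the action formulas for $\phi$ together with the identity that nearfield multiplication agrees with field multiplication on squares, which is immediate from \eqref{Nmult}.
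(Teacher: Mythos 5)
Your proposal is correct and follows essentially the same route as the paper: identify the $q+1$ known points, take a unital point $(1,b,1)$ on the secant $[1,0,-1]$ with $b\notin W$, and observe that its $G$-orbit (of size $o(G)=q^3-q$, so $G$ acts regularly on the affine unital points off $[1,0,0]$) fills out the rest of $U$, with $b\star\delta(c)=b\,\delta(c)$ because $D\subseteq\ffs{q}\subseteq\square_{q^2}$. The one spot where your wording drifts is the justification of $b\notin W$: the clean reason is that Theorem~\ref{restrictionthm}(iii) makes $[0,1,-w]$ the tangent through $(1,0,0)$ with tangency point $(0,w,1)$ for each $w\in W$, so no point $(1,w',1)$ with $w'\in W$ can lie in $U$ --- the contradiction is with that tangency statement, not with the count $|U\cap[1,0,0]|=q$.
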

\begin{proof}
By \Cref{restrictionthm}, we already know $q+1$ points of $U$, namely,
$(0,1,0)$ and the set of points $\{(0,w,1)\,:\, w\in W\}$.
Recall that $[1,0,0]$ is the unique tangent line through $(0,1,0)$.
Consider the line $[1,0,-1]$, which also contains $(0,1,0)$ and is therefore
a secant line. Let $(1,b,1)$ be any one point of $U\cap [1,0,-1]$ with
$b\ne 0$. Note that $b\notin W$ by \Cref{restrictionthm}(iii).
The orbit of
this point under $G$ is
$\{(c,b\, \delta(c)+w,1)\,:\, c\in\ffs{q^2}\land w\in W\}$,
which gives us the remaining $q^3-q$ points of $U$. Since $\delta(0)=0$ in
either case of \Cref{deltalem}, we may describe $U$ as stated.
\end{proof}
We next establish a useful projective equivalence.
\begin{thm} \label{finalUform}
Let $U, G, H, W, C, D$ and $\delta$ be as in \Cref{restrictionthm}
and assume $o(G)=q(q^2-1)$.
\begin{enumerate}[label=(\roman*)]
\item If $\delta(x)=x^{j(q+1)}$,
then $U$ is projectively equivalent to a unital $\U(j)$ given by
$$\U(j) = \{(x,\Nm(x)^j+t\epsilon,1)\,:\, x\in\ff{q^2}\land t\in \ff{q}\}
\cup\{(0,1,0)\}.$$

\item If $\delta(x)=x^{j(q+1)/2}$ whenever $x\in\square_{q^2}$,
then $U$ is projectively equivalent to a unital that contains the
subset $\V(j)$ given by
\begin{equation*}
\V(j) =
\{(x,x^{j(q+1)/2}+t\epsilon,1)\,:\, x\in\square_{q^2}^\star\land t\in \ff{q}\}.
\end{equation*}
\end{enumerate}
\end{thm}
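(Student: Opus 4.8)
The plan is to start from the explicit description of $U$ furnished by \Cref{unitalform}, namely
$$U = \{(x,b\,\delta(x)+w,1)\,:\, x\in\ff{q^2}\land w\in W\}\cup\{(0,1,0)\},$$
where $b\in\ffs{q^2}\setminus W$ and, by \Cref{CDWlem}, $W=\ff{q}h$ for some $h\in\ffs{q^2}$, and then to apply a carefully chosen linear collineation of $\NP$ to normalize both $b$ and $h$. Since any $\phi(c,d,0,0)\in\hat{\grp R}_0$ sends $(x,y,1)$ to $(x\nm c,\ y\nm d,1)$, the image of $U$ under $\phi(c,d,0,0)$ is $\{(x\nm c,\ (b\,\delta(x)+w)\nm d,1)\}$; reparametrizing by $x'=x\nm c$ this becomes $\{(x',\ b\,\delta(x'\nm c^{-1})\nm d + w\nm d,1)\}$. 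The multiplicativity of $\delta$ (it is a homomorphism on $C=\cyc{\N^\star,\star}$) lets us pull out a constant factor $\delta(c^{-1})\nm d$, and since the norm is multiplicative with respect to $\nm$, the parametrized curve is again of the same shape with $b$ replaced by $b\,\delta(c^{-1})\nm d$ (a scalar) and $W$ replaced by $W\nm d = dW$ when $d\in\square_{q^2}$ or $W^q d$ otherwise — in either case still a one-dimensional $\ff{q}$-space. The goal is to choose $c,d$ so that $W$ becomes $\ff{q}$ itself (i.e.\ $h=1$) and $b$ becomes $\epsilon$; this will be possible because $b\notin W$ means $b,h$ are $\ff{q}$-linearly independent, so $\{h,b\}$ is an $\ff{q}$-basis of $\ff{q^2}$, and the collineation group acts transitively enough on such configurations.

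In case (i), where $\delta(x)=x^{j(q+1)}=\Nm(x)^j\in\ffs{q}$, the second coordinate $b\,\delta(x)+w$ ranges, as $w$ runs over $W=\ff{q}h$, over the coset $b\,\Nm(x)^j + \ff{q}h$. After the normalization $h\mapsto 1$, $b\mapsto\epsilon$ (using that $\ff{q^2}=\ff{q}\oplus\ff{q}\epsilon$ and $\Nm(x)^j\in\ff{q}$ is unaffected by the scalar multiplications — more precisely, tracking how $\Nm(x)^j$ transforms under $x\mapsto x\nm c$ gives the extra factor $\Nm(c)^{-j}$, which is a nonzero element of $\ff{q}$ and can be absorbed into the choice of $d$), the curve becomes $\{(x,\Nm(x)^j+t\epsilon,1)\,:\,x\in\ff{q^2},t\in\ff{q}\}$, which together with $(0,1,0)$ is exactly $\U(j)$. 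For case (ii), only the restriction $\delta(x)=x^{j(q+1)/2}$ for $x\in\square_{q^2}$ is known, so one only controls the orbit of the chosen point $(1,b,1)$ under the subgroup $\{\phi(c,d,0,0):c\in\square_{q^2}^\star\}$, giving the subset $\{(x,b\,x^{j(q+1)/2}+w,1):x\in\square_{q^2}^\star,w\in W\}$ of $U$; applying the same normalization (now noting $x^{j(q+1)/2}$ need not lie in $\ff{q}$, so one must check the scalar adjustments still land it in the stated form, using $\epsilon^q=-\epsilon$ and $\epsilon^2=\beta$) yields a unital containing $\V(j)$.

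The main obstacle I expect is the bookkeeping in case (ii): unlike $\Nm(x)^j$, the quantity $x^{j(q+1)/2}$ is not in $\ff{q}$, so when we substitute $x\mapsto x\nm c$ and factor through the homomorphism $\delta$, the transformed second coordinate picks up a factor $\delta(c)^{-1}\nm d = c^{-j(q+1)/2}\nm d$ that interacts nontrivially with the square/nonsquare dichotomy defining $\nm$, and one has to verify that a single choice of $(c,d)$ simultaneously sends $h$ to the right generator of $W$ and makes the leading coefficient equal to $1$ while keeping the "$+t\epsilon$" term intact. I would handle this by first using a homology $\phi(1,d,0,0)$ with suitable $d\in\square_{q^2}$ to fix $W=\ff{q}h\mapsto\ff{q}$ and rescale $b$, then a homology $\phi(c,1,0,0)$ to eliminate the leading coefficient, checking at each stage (via $\Nm(c)\in\ffs{q}$ and the explicit formula \eqref{Nmult}) that the coordinate expressions stay in the advertised shape; the fact that $b\notin W$ guarantees the relevant elements are units and the choices are consistent. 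A secondary, minor point is confirming that $\phi(c,d,0,0)$ genuinely is a collineation of $\NP$ with the stated action on affine points — but this is immediate from the definition of $\phi$ in Section 3.
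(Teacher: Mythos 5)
Your overall strategy is the paper's strategy: start from the description of $U$ in \Cref{unitalform} and use a homology in $\hat{\grp{R}}_0$ to pass between $U$ and the model set (the paper does this with the single map $\phi(1,d,0,0)$, $d=\tfrac{h}{2}(bh^{-1}-b^qh^{-q})$, and verifies directly that it carries the fibers of $\U(j)$ onto those of $U$). However, there is a concrete error in your normalization. Sending $h\mapsto 1$ and $b\mapsto\epsilon$ produces the set $\{(x,\epsilon\Nm(x)^j+t,1)\,:\,t\in\ff{q}\}$, in which the norm term of the second coordinate lies in $\ff{q}\epsilon$ and the free parameter ranges over $\ff{q}$; this is \emph{not} $\U(j)$, where the roles of $\ff{q}$ and $\ff{q}\epsilon$ are reversed (e.g.\ $(1,\epsilon,1)$ lies in the former but not the latter). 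So the sentence ``the curve becomes $\{(x,\Nm(x)^j+t\epsilon,1)\}$'' does not follow from the normalization you set up; either a further homology is needed, or the target should be $W\mapsto\ff{q}\epsilon$ together with $\Tr(b')=2$ for the new coefficient $b'$ (its $\ff{q}\epsilon$-component is then absorbed into $W$). Moreover, reachability of that target is exactly where $b\notin W$ must enter and is asserted rather than proved in your sketch: the condition $W\nm d=\ff{q}\epsilon$ confines $d$ to a single $\ffs{q}$-coset, after which the only remaining freedom in rescaling $b$ is multiplication by $\delta(c)^{-1}\in D=\ffs{q}$, and one must check that $\Tr(bd)\ne 0$ for $d$ in that coset --- which reduces precisely to $bh^{-1}\notin\ff{q}$. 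This check is the crux, and ``the collineation group acts transitively enough on such configurations'' does not substitute for it.

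Your anticipated obstacle in case (ii) is a phantom. For $x\in\square_{q^2}^\star$ write $x=\gen^{2k}$; then $x^{j(q+1)/2}=(\gen^{q+1})^{jk}\in\ffs{q}$, so $\delta$ still takes values in $D=\ffs{q}$ and there is no new interaction with the square/nonsquare dichotomy in the second coordinate. The computation for case (ii) is the case (i) computation verbatim, restricted to $x\in\square_{q^2}^\star$, which is why the paper omits it. Once the normalization target is corrected and the reachability check above is supplied, your argument goes through and is essentially the paper's proof written as a two-stage normalization of $U$ rather than as a single explicit map applied to $\U(j)$.
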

\begin{proof}
Set $d=\frac{h}{2} (b h^{-1} - b^q h^{-q})$, with $W=\ff{q} h$.
By \Cref{unitalform}, we know $b h^{-1}\notin\ff{q}$, and so $d\ne 0$.

First assume $\delta(x)=x^{j(q+1)}=\Nm(x)^j$, with
$1\le j<q-1$ and $\gcd(j,(q-1)/2)=1$.
We claim $\U(j)=U^\phi$, where $\phi=\phi(1,d,0,0)\in\Aut(\NP)$.
Suppose that $d\in\square_{q^2}$.
First, we note that
$(x,\Nm(x)^j+t\epsilon,1)^\phi= (x,(\Nm(x)^j+t\epsilon)d,1)$.
Thus, it is enough to show that for fixed $x\in\ff{q^2}$, we have
\begin{equation} \label{transformidentity}
\{b\, \Nm(x)^j+t'h\,:\, t'\in\ff{q}\}
=
\{(\Nm(x)^j+t\epsilon)d \,:\, t\in\ff{q}\}.
\end{equation}
Now, for fixed $t\in\ff{q}$ and $x\in\ff{q^2}$,
\begin{align*}
(\Nm(x)^j+t\epsilon)d
&=\frac{b}{2} \Nm(x)^j - \frac{h}{2} \Nm(x)^j b^qh^{-q} + td\epsilon\\
&=b\, \Nm(x)^j -\frac{h}{2}\Nm(x)^j
\left(bh^{-1} + b^qh^{-q}\right)+ td\epsilon\\
&=b\, \Nm(x)^j
+\left(td\epsilon h^{-1} -\frac{1}{2}\Nm(x)^j \Tr(bh^{-1})\right) h.
\end{align*}
Using the fact $\epsilon^q=-\epsilon$, it is easily verified that
$d\epsilon h^{-1}\in\ffs{q}$, and so we have
\begin{equation*}
b\Nm(x)^j+\left(td\epsilon h^{-1} -\frac{1}{2}\Nm(x)^j \Tr(bh^{-1})\right) h
 = b\Nm(x)^j+(t t_1 - t_2) h,
\end{equation*}
where $t_1,t_2\in\ff{q}$ are fixed.
Thus,
\begin{align*}
\{(\Nm(x)^j+t\epsilon)d \,:\, t\in\ff{q}\}
&=
\{(b\, \Nm(x)^j+ (t t_1 - t_2) h \,:\, t\in\ff{q}\}\\
&=
\{b\, \Nm(x)^j+t'h\,:\, t'\in\ff{q}\},
\end{align*}
which proves \Cref{transformidentity}. The case where
$d\in\nonsquare_{q^2}$ is very similar, the only difference being that we find
ourselves dealing with $\Nm(x)^jd - td\epsilon$, and the rest of the argument
follows in exactly the same way.

The proof for the case where $\delta(x)=x^{j(q+1)/2}$ is almost exactly the
same as that just given, differing only in the restriction of $\delta$ to
$x\in\square_{q^2}$. As such, we omit the details.
\end{proof}
In order to deal with the case with $\delta(x)=\Nm(x)^j$, we now define a
slightly more general form of $\U(j)$.
Specifically, for any $b\in\ffs{q}$, we set
\begin{equation*}
\U(b,j) = \{(x,b\, \Nm(x)^j+t\epsilon,1)\,:\, x\in\ff{q^2}\land t\in \ff{q}\}
\cup\{(0,1,0)\}.
\end{equation*}
We note that $\U(b,1)$ are known to be unitals thanks to the work of Wantz
\cite{W-2008-uitrn}.
\begin{lem}\label{lem-UBox}
Let $b,b'\in \ffs{q}$ be distinct.
The following statements hold.
\begin{enumerate}[label=(\roman*)]
\item $\U(b,j)=\{ (0,1,0)\} \cup \bigcup_{a\in \ff{q}} B(a,2 a^j b)$.
\item $\U(b,j)\cap \U(b',j)=\{ (0,1,0)\} \cup B(0,0)  $.
\item If $\U(b,j)$ is a unital, then any line $[u,1,z]$ with $u\ne 0$ that is
disjoint from $B(0,0)$ must be tangent to $\U(b,j)$ for exactly one
$b\in \ffs{q}$.
\item If $\U(b,j)$ is a unital, then any secant line $[u,1,z]$ with $u\ne 0$
that is disjoint from $B(0,0)$ must have at least 2 unital points whose
first coordinate lies in $\square_q^\star$, and at least 2 unital points whose
first coordinate lies in $\nonsquare_q$.
\item $B(0,0)\subseteq \V(j)$ and for $c,d\in\ffs{q}$, we have
\begin{equation*}
|\V(j) \cap B(c,d)| =
\begin{cases}
\frac{q+1}{2} &\text{ if $c=a^2$ and $d=2a^j$ for some $a\in\ffs{q}$,}\\
\ \ 0 &\text{ otherwise.}
\end{cases}
\end{equation*}
\end{enumerate}
\end{lem}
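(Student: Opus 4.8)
The plan is to treat the five parts in turn. Part (i) is the workhorse: it feeds into (ii)--(iv), and once it is available the counting identity behind (iii) makes (iv) essentially free.

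\emph{Parts (i) and (ii).} For (i) I would simply chase traces and norms. A point $(x,b\,\Nm(x)^j+t\epsilon,1)$ of $\U(b,j)$ has first coordinate of norm $a:=\Nm(x)\in\ff q$; since $b\,\Nm(x)^j=ba^j\in\ff q$ and $\Tr(t\epsilon)=t\epsilon+(t\epsilon)^q=t\epsilon-t\epsilon=0$, its second coordinate has trace $2ba^j$, so the point lies in $B(a,2a^jb)$. Conversely, if $(x,y,1)\in B(a,2a^jb)$ and $y=y_1+y_2\epsilon$, then $2y_1=\Tr(y)=2a^jb$, so $y_1=b\,\Nm(x)^j$ and $(x,y,1)\in\U(b,j)$; the contribution $a=0$ is exactly $B(0,0)$, and $(0,1,0)$ is the only point of $\U(b,j)$ off the affine plane. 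This proves (i). For (ii), by (i) and the pairwise disjointness of the sets $B(a,\cdot)$ for distinct $a\in\ff q$, $\U(b,j)\cap\U(b',j)$ is $\{(0,1,0)\}$ together with $\bigcup_{a\in\ff q}\bigl(B(a,2a^jb)\cap B(a,2a^jb')\bigr)$; and $B(a,2a^jb)\cap B(a,2a^jb')\neq\varnothing$ forces $2a^jb=2a^jb'$, hence $a=0$ as $b\neq b'$, leaving only $B(0,0)$.

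\emph{Part (iii).} Write $\ell=[u,1,z]$ with $u\neq0$. One checks $(0,1,0)\notin\ell$, so every unital point of $\ell$ is affine; moreover $\ell$ is disjoint from $B(0,0)$ exactly when $\Tr(z)\neq0$, since the only point of $\ell$ with first coordinate $0$ is $(0,-z,1)$. Parametrising the affine points of $\ell$ as $(x,-z-x\star u,1)$ and using $\Tr(x\star u)=\Tr(u'x)$ for the appropriate $u'\in\{u,u^q\}$, exactly $q$ of the $q^2$ affine points have second coordinate of trace $0$, and none of these is the point with $x=0$; hence exactly $q^2-q-1$ affine points of $\ell$ have $x\neq0$ and second coordinate of nonzero trace. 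For such a point $(x,y,1)$ the condition $(x,y,1)\in\U(b,j)$ is $\Tr(y)=2b\,\Nm(x)^j$, which pins down $b=\Tr(y)/(2\Nm(x)^j)\in\ffs q$ uniquely; while no point of $\ell$ with $x=0$ or with $\Tr(y)=0$, and not the point of $\ell$ at infinity, lies in any $\U(b,j)$ with $b\in\ffs q$. Thus $\sum_{b\in\ffs q}|\ell\cap\U(b,j)|=q^2-q-1$. The $\U(b,j)$ ($b\in\ffs q$) are mutually projectively equivalent via the collineations $\phi(1,d,0,0)$, $d\in\ffs q$, hence all have order $q$ in $\NP$ under the hypothesis, so $|\ell\cap\U(b,j)|\in\{1,q+1\}$. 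If $k$ of the $q-1$ values $b$ give a tangent, then $k\cdot1+(q-1-k)(q+1)=q^2-q-1$, which simplifies to $k=1$.

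\emph{Part (iv).} Let $\ell=[u,1,z]$, $u\neq0$, be a secant of the unital $\U(b,j)$ that is disjoint from $B(0,0)$. By (i), $\ell\cap\U(b,j)$ is the disjoint union of $\ell\cap B(a,2a^jb)$ over $a\in\ffs q$, so $q+1=S^{+}+S^{-}$ where $S^{+}=\sum_{a\in\square_q^\star}|\ell\cap B(a,2a^jb)|$ and $S^{-}=\sum_{a\in\nonsquare_q}|\ell\cap B(a,2a^jb)|$. By \Cref{replacementlemma3}(iv) each summand is at most $2$, and $|\square_q^\star|=|\nonsquare_q|=(q-1)/2$, so $S^{\pm}\le q-1$; hence $S^{+}=(q+1)-S^{-}\ge2$ and likewise $S^{-}\ge2$. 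Since a point of $B(a,\cdot)$ has first coordinate $x$ with $\Nm(x)=a$, and (as $q$ is odd) $x$ is a nonzero square of $\ff{q^2}$ precisely when $a\in\square_q^\star$, the $S^{+}$ points counted above are exactly the unital points of $\ell$ whose first coordinate is a nonzero square and $S^{-}$ those whose first coordinate is a non-square, which is (iv).

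\emph{Part (v), and the main obstacle.} The inclusion $B(0,0)\subseteq\V(j)$ is immediate, since $B(0,0)=\{(0,t\epsilon,1):t\in\ff q\}$ is exactly the set of points of $\V(j)$ with first coordinate $0$ (reading $0^{j(q+1)/2}$ as $0$). For $c,d\in\ffs q$, a point $(x,x^{j(q+1)/2}+t\epsilon,1)$ of $\V(j)$ lies in $B(c,d)$ iff $\Nm(x)=c$ and, since $x^{j(q+1)/2}\in\ff q$ and $\Tr(t\epsilon)=0$, $2x^{j(q+1)/2}=d$, with $t$ otherwise free; so the count is the number of admissible first coordinates, times the number of admissible $t$. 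To count the admissible $x$, put $v=x^{(q+1)/2}$: for $x\in\square_{q^2}^\star$ this lies in $\ffs q$ with $v^2=\Nm(x)$, and $x\mapsto v$ is $\tfrac{q+1}{2}$-to-one from $\square_{q^2}^\star$ onto $\ffs q$ because $\Nm(\gen)=\beta$ is a primitive element of $\ff q$; thus the number of admissible $x$ equals $\tfrac{q+1}{2}\cdot|\{v\in\ffs q:v^2=c,\ v^j=d/2\}|$. Since $\gcd(j,q-1)=1$ by \Cref{deltalem}, $j$ is odd, so $v\mapsto v^j$ is a bijection of $\ffs q$; hence $v^2=c$, $v^j=d/2$ has a unique solution when $c=a^2$ is a nonzero square and $d=2a^j$ for one of its two square roots $a$, and no solution otherwise. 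Assembling these counts gives the description in (v). I expect this last part to be the fiddliest: tracking the $\tfrac{q+1}{2}$-to-one reduction and the role of $j$ being odd is where a slip would most easily enter, whereas the genuinely new device is the identity $\sum_{b\in\ffs q}|\ell\cap\U(b,j)|=q^2-q-1$ behind (iii), after which (iv) follows with no further geometric input.
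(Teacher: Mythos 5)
Your proof is correct and takes essentially the same route as the paper's for every part: (i) and (ii) by the same trace/norm computation, (iii) by the same double count (you compute $\sum_{b\in\ffs{q}}|\ell\cap\U(b,j)|=q^2-q-1$ directly, while the paper reaches the same linear equation by partitioning the affine points into $S=\bigcup_{ab=0}B(a,b)$ and the sets $\U(b,j)\setminus(B(0,0)\cup\{(0,1,0)\})$), (iv) by the same pigeonhole via \Cref{replacementlemma3}(iv), and (v) by counting the first coordinates $x$ with $\Nm(x)=c$ and $x^{j(q+1)/2}=d/2$. Your explicit observation in (iii) that the $\U(b,j)$, $b\in\ffs{q}$, are mutual images under $\phi(1,d,0,0)$ is a worthwhile addition, since the paper tacitly assumes each $|\ell\cap\U(b,j)|\in\{1,q+1\}$ and this equivalence is what justifies it. One caveat on (v): your own bookkeeping ($\tfrac{q+1}{2}$ admissible first coordinates times $q$ free values of $t$) yields $|\V(j)\cap B(a^2,2a^j)|=\tfrac{q(q+1)}{2}$, not $\tfrac{q+1}{2}$; the stated value (which the paper's proof also lands on) is the number of admissible first coordinates only, so either the statement should read $\tfrac{q(q+1)}{2}$ or be understood as counting first coordinates — you should flag that discrepancy rather than assert that "assembling these counts" gives the stated figure, since your assembled count is $q$ times larger.
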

\begin{proof}
For (i), we have
\begin{align*}
\U(b,j)
&=\{ (0,1,0)\} \cup
\{(x , b\, \Nm(x)^j + t\epsilon,1)\,:\, x \in \ff{q^2}, t \in \ff{q}\} \\
&=\{ (0,1,0)\} \cup
\bigcup_{a\in\ff{q}} \{(x , ba^{j} + t\epsilon ,1)\,:\,\Nm(x)=a, t\in\ff{q}\}\\
&= \{ (0,1,0)\} \cup \displaystyle \bigcup_{a\in \ff{q}} B(a,2 a^jb),
\end{align*}
where the last step follows from
$\Tr(ba^j+t\epsilon)=\Tr(a^j b)+t\Tr(\epsilon)=2a^ jb+0$.

If $(x,y,1)\in B(a,2  a^j b)\cap B(a,2 a^j b')$, then
$\Tr(y) = 2 a^j  b = 2 a^j b'$, which implies $a=0$ or $b=b'$. Applying (i) now
proves (ii).

For (iii), suppose $\lne{l}=[u,1,z]$ with $u\ne 0$ is disjoint from $B(0,0)$.
Let $n_s,n_t$ be the numbers of $b$'s in $\ffs{q}$ such that $\lne{l}$ is secant
to and tangent to $\U(b,j) \setminus  (B(0,0) \cup \{ (0,1,0)\} )$,
respectively. Then $n_s+n_t=q-1$.
On the other hand, the set of affine points is the disjoint union
\begin{align*}
\displaystyle \bigcup_{a,b\in \ff{q}} B(a,b)
&=S\cup \left(\bigcup_{a,b\in \ff{q}^\star} B(a,b) \right)\\
&=S\cup \left(\bigcup_{a,b\in \ff{q}^\star} B(a,2 a^j b) \right)\\
&=S\cup \left(\bigcup_{b\in \ff{q}^\star} \U(b,j) \setminus (B(0,0)
\cup \{ (0,1,0)\} ) \right),
\end{align*}
where $S=\bigcup_{ab=0} B(a,b)$ is the set of
points on $[1,0,0]$ or on the $q$ horizonal lines $[0,1,c]$ with $\Tr(c)=0$.
So $\lne{l}$ intersects $S$ in exactly $q+1$ points.
Counting the number of affine points of $\lne{l}$ gives
$(q+1)n_s + n_t + (q+1) = q^2$.
Together with $n_s + n_t = q -1$, on solving, we have $n_s = q-2$ and $n_t=1$, proving (iii).

If we now further assume $\lne{l}$ is a secant, then because of a property of
the norm, when $a$ runs over $\ffs{q}$, precisely $\frac{q-1}{2}$ of
$B(a,2 b a^j)$ are composed of points whose first coordinates are squares.
So, by \Cref{replacementlemma3}
(iv), there are at most $2(\frac{q-1}{2}) = q-1$ unital points on
$\lne{l}$ with first coordinates in $\square_q^\star$. A similar argument
works for the non-square case, which proves (iv).

For (v), clearly $B(0,0)\subseteq \V(j)$.
For $(x,\delta(x)+t\epsilon,1)\in \V(j)$ with $x\in\square_{q^2}^\star$,
we have $\delta(x)=a\in\ffs{q}$.
Thus, $\Tr(\delta(x)+t\epsilon)=2a$. Now,
$$\Nm(x)^{j} = x^{j(q+1)}=\delta(x)^2=a^2,$$
so that $\Nm(x)=a^{2j'}$. Thus, $(x,\delta(x)+t\epsilon,1)\in B(a^{2j'},2a)$
for some $a\in\ffs{q}$. It remains to determine the cardinality of
$\V(j)\cap B(a^{2j'},2a)$.
Set $R=\{\gen^{i(q-1)}\,:\, 0\le i\le q\}$. Then $R$ is precisely
the elements of norm 1 in $\ff{q^2}$. Furthermore, $R\subseteq\square_{q^2}$.
If $\delta(x)=a\in\ffs{q}$, then we observe
\begin{align*}
\Tr(\delta(\gen^{i(q-1)}x) + t\epsilon)
&= \Tr(\delta(\gen^{i(q-1)}x))\\
&= \Tr(\gen^{ij(q^2-1)/2} a)\\
&= \Tr((-1)^i a)\\
&=(-1)^i 2a,
\end{align*}
which proves the $q+1$ elements of norm $a^{2j'}$ split evenly among the
elements of trace $\pm 2a$. Finally, we may set $a'^j=a$ so that
$B(a^{2j'},2a)=B(a'^2,2a'^j)$, completing the proof.
\end{proof}

We are finally in a position to prove the following classification result.
\begin{thm} \label{TheWantzTheorem}
Let $U$ be a parabolic unital of order $q$ containing $(0,1,0)$ or $(1,0,0)$ in the regular
nearfield plane $\N=\N(2,q^2)$ with the largest possible automorphism group.
Then $o(\Aut(U))=q(q^2-1)$ and $U$ is projectively equivalent to
$$\{(x,\Nm(x)+t\epsilon\,:\, x\in\ff{q^2}, t\in\ff{q}\} \cup \{(0,1,0)\},$$
where $\epsilon\in\ffs{q^2}$ satisfies $\epsilon^q=-\epsilon$.
In particular, $U$ is projectively equivalent to a Wantz unital.
\end{thm}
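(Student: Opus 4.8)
The plan is to feed the structural results of this section into the O'Nan-configuration obstruction \Cref{thmONan}, using the all-ones-polynomial facts \Cref{rexlem} and \Cref{allonespoly} as the arithmetic engine. First I would carry out the reduction. If $q\nmid o(G)$ where $G=\Aut(U)\cap\grp{T}\hat{\grp{R}}_0\grp{B}_0$, then by \Cref{andresatz12} the $p$-part of $o(G)$ is at most $p^{e-1}$ while its $p'$-part divides $2(q-1)^2$, so $o(G)\le 2p^{e-1}(q-1)^2<q(q^2-1)$; since a Wantz unital realises $q(q^2-1)$, maximality forces $q\mid o(G)$. As $U$ contains $(0,1,0)$ or $(1,0,0)$, \Cref{restrictionthm} applies and maximality upgrades its conclusion to $o(G)=q(q^2-1)$. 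Then \Cref{CDWlem}, \Cref{deltalem}, \Cref{unitalform} and \Cref{finalUform} show that, after a projective transformation, either \textbf{(i)} $U=\U(j)$ with $1\le j<q-1$ and $\gcd(j,(q-1)/2)=1$, or \textbf{(ii)} $U$ contains $\V(j)$ with $q\equiv 3\bmod 4$. In case (i) we may moreover assume $\gcd(j,q-1)=1$: if $\gcd(j,q-1)=2$ then, writing $\U(j)=\{(0,1,0)\}\cup\bigcup_a B(a,2a^j)$ by \Cref{lem-UBox}(i), \Cref{replacementlemma3}(ii),(iii) shows some line $[0,1,z]$ meets $\U(j)$ in $2(q+1)$ points (or in no points at all), neither of which is possible in a unital.

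For the core step, showing $j=1$ in case (i): in the tangent-point coordinates of \Cref{finalUform} the unital $\U(j)$ is a parabolic unital through $(0,1,0)$ with $\{\phi(1,1,0,t\epsilon):t\in\ff{q}\}<\Aut(\U(j))$, so if $\U(j)$ is a unital then \Cref{thmONan} forbids every O'Nan configuration through $(0,1,0)$ that avoids all lines of the form $[0,1,z]$; I would contradict this whenever $j\ge 2$ by constructing such a configuration. For $c\in\ffs{q}$ the point $(c,\Nm(c)^j,1)=(c,c^{2j},1)$ lies on $\U(j)$, and by \Cref{lem-collinear} (using $\ffs{q}\subseteq\square_{q^2}$, so that nearfield and field arithmetic agree on the relevant differences) three such points with distinct first coordinates $1,c_1,c_2$ are collinear on a non-horizontal line exactly when $h_{2j-1}(c_1)=h_{2j-1}(c_2)$. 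Since $1<2j-1$ and, after reducing the exponent modulo $q-1$, $2j-1<q-1$, \Cref{rexlem} shows $h_{2j-1}$ is not a permutation polynomial; when $\gcd(2j-1,q-1)=1$, \Cref{allonespoly} yields distinct $c_1,c_2\in\ff{q}\setminus\{0,1\}$ with $h_{2j-1}(c_1)=h_{2j-1}(c_2)\ne 0$, and when $\gcd(2j-1,q-1)>1$ the nontrivial $(2j-1)$-st roots of unity in $\ffs{q}$ supply such a pair with common value $1$. In either case one gets three points of $\U(j)$ on a non-horizontal secant; choosing the $\epsilon$-parts of suitable unital points on the two secants $[1,0,-1]$ and $[1,0,-c_1]$ through $(0,1,0)$ so that, by the same collision, those two points and the third point of the secant become collinear on a non-horizontal line, one obtains an O'Nan configuration on $(0,1,0)$ together with five further unital points, none of whose four lines is horizontal — contradicting \Cref{thmONan}. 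The non-degeneracy checks (four distinct lines, six distinct points, the two diagonal lines distinct) follow from $c_1,c_2\notin\{0,1\}$, $c_1\ne\pm c_2$, and a nonzero choice of the free $\epsilon$-parameter. Hence $j=1$, so $U$ is projectively equivalent to $\U(1)=\{(x,\Nm(x)+t\epsilon,1):x\in\ff{q^2},t\in\ff{q}\}\cup\{(0,1,0)\}$, which is the Wantz unital of \eqref{WantzU} with $a=0$, $b=1$ (so $b^2-a^2=1\in\square_q$) and is therefore indeed a unital.

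Case (ii) is dispatched in the same spirit: by \Cref{lem-UBox}(v) each of $\V(j)\cap B(a^2,\pm 2a^j)$ has $(q+1)/2$ points, so a line $[0,1,z]$ through a point of $\V(j)$ is a $(q+1)$-secant of the ambient unital $U$ meeting $B(a^2,\cdot)$ in $q+1$ points by \Cref{replacementlemma3}(ii), forcing a whole set $B(a^2,2a^jb)$ into $U$ — incompatible with the $\delta$-shape of $U$ from \Cref{unitalform} and the $\le 2$-point bound of \Cref{replacementlemma3}(iv) along the non-horizontal secants — so (ii) does not arise. Together with $o(G)=q(q^2-1)$ this proves the theorem. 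The step I expect to be the main obstacle is the explicit O'Nan-configuration construction in case (i): one must pick the two secant lines through $(0,1,0)$ and unital points on them with the prescribed second-coordinate traces, compute the intersection of the two diagonal lines via \Cref{lem-collinear}, and verify that all six points are distinct and lie on $\U(j)$ and that no line is horizontal — with the square/non-square twist of the nearfield product, and a handful of exceptional exponents (most notably $j=(q+1)/2$ when $q\equiv 1\bmod 4$, where $h_{2j-1}$ degenerates modulo $q-1$ to a permutation polynomial and must be handled by a separate argument), being the delicate points; by comparison the reduction, case (ii), and the order bookkeeping are routine given the machinery already in place.
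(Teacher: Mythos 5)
Your overall route is the paper's: reduce via \Cref{restrictionthm}, \Cref{CDWlem}, \Cref{deltalem}, \Cref{unitalform} and \Cref{finalUform} to the two cases $\U(j)$ and ``contains $\V(j)$,'' then kill $j>1$ by building O'Nan configurations from collisions of $h_{2j-1}$ (\Cref{allonespoly}) and invoking \Cref{thmONan}; your extra observation that $\gcd(j,q-1)=2$ is already fatal because some line $[0,1,z]$ then meets $\U(j)$ in $0$ or $2(q+1)$ points is a legitimate shortcut. But the exceptional exponent $j=(q+1)/2$ with $q\equiv 1\bmod 4$ is a genuine hole, not a delicate detail: there $2j-1\equiv 1\pmod{q-1}$, so $h_{2j-1}$ agrees with $x\mapsto x+1$ on $E$ and no collision exists, and $\gcd(2j-1,q-1)=1$ kills your roots-of-unity fallback as well. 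You acknowledge this case but supply nothing for it, and the required idea is of a different kind: the paper pairs $\U(j)$ with $\U(j+(q-1)/2)$, compares the square and non-square solutions of the tangency conditions $\Tr(-x-u)=\pm 2\Nm(x)^{j+(q-1)/2}$ on a tangent line of $\U(j)$, and derives from \Cref{lem-UBox}(iv) that at most one of the pair is a unital; since $\U(1)$ is a Wantz unital, $\U((q+1)/2)$ is not. Without an argument of this sort your case (i) is incomplete.

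Your disposal of case (ii) does not work. After the normalization of \Cref{finalUform}, the ambient unital is $\{(x,\delta(x)+t\epsilon,1):x\in\ff{q^2},\,t\in\ff{q}\}\cup\{(0,1,0)\}$ with $\delta$ the $(q+1)$-to-$1$ homomorphism of $\cyc{\N^\star,\star}$ onto $\ffs{q}$, and a line $[0,1,z]$ with $\Tr(-z)=2a\ne 0$ meets it in exactly the $q+1$ points whose first coordinates are the $\delta$-preimages of $a$; in case (ii) these split as $(q+1)/2$ squares (the points of $\V(j)$ you count) and $(q+1)/2$ non-squares. So the horizontal secants are honest $(q+1)$-secants, nothing forces a whole set $B(a^2,2a^j)$ into $U$, and no bound from \Cref{replacementlemma3} is violated: case (ii) cannot be dismissed by counting. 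The paper instead constructs an O'Nan configuration entirely inside $\V(j)$, first ruling out $\gcd(j(q+1)/2-1,q-1)>1$ with a roots-of-unity configuration and then using that $\delta(x)=x^{k}$ on $\ffs{q}$ for $k\equiv j(q+1)/2\bmod(q-1)$ together with \Cref{allonespoly} applied to $h_{k-1}$. That argument, or a correct substitute, is missing from your proposal.
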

\begin{proof}
Thanks to the work above, we know that $U$ is either projectively equivalent
to $\U(j)$ or to a unital containing $\V(j)$. We therefore have two tasks.
The first, and more difficult, task is to show $\U(j)$ is
not a unital for any $1<j<q-1$ with $\gcd(j,q-1)=1$.
This will prove only $\U(1)$ is a unital among the sets $\U(j)$.
The second task is to show no unital can contain $\V(j)$.

To begin, say $1\le j\le (q-1)/2$, and suppose both $\U(j)$ and $\U(j+(q-1)/2)$
are unitals.
Then $\U(j+(q-1)/2)^{\phi(1,-1,0,0)}=\U(-1,j + (q-1)/2)$ is also a
unital. Let $\lne{l}=[1,1,u]$ be a tangent to $\U(j)$.
Observe that $(x,y,1)\in \lne{l}\cap \U(b,j)$ if and only if
$x + y + u =0$ and $\Tr(y)=2 b \Nm(x)^j$.
Applying this observation to the three unitals $\U(j)$, $\U(j+(q-1)/2)$,
and $\U(-1,j+(q-1)/2)$, we get that a point $(x,y,1)$ on $\lne{l}$ is on each
such unital precisely when the following hold, respectively:
\begin{align}
\label{eq:1}
\Tr(-x-u) &= 2\Nm(x)^{j}, \\
\label{eq:2}
\Tr(-x-u) &= 2\Nm(x)^{j + \frac{q-1}{2}}, \\
\label{eq:3}
\Tr(-x-u) &= - 2\Nm(x)^{j + \frac{q-1}{2}}.
\end{align}
Since $\Nm(x)^{\frac{q-1}{2}}=\pm 1$, depending on whether $x\in\square_{q^2}$,
the square solutions of \eqref{eq:1} and \eqref{eq:2} are
identical, as are the nonsquare solutions of \eqref{eq:1} and \eqref{eq:3}.
Since $\lne{l}$ is tangent to $\U(j)$, there is only one solution to
\eqref{eq:1}. So either \eqref{eq:2} has exactly one square solution or
\eqref{eq:3} has exactly one non-square solution.
Either case contradicts \Cref{lem-UBox}(iv).
We conclude that at most one of $\U(j)$ and $\U(j+(q-1)/2)$ can be a unital.
Since $\U(1)$ is known to be a unital, we find $\U((q+1)/2)$ is not.

Now suppose $\U(j)$ is a unital with $1<j<q-1$, $\gcd(j,(q-1)/2)=1$, and
$j \ne(q+1)/2$.
We next claim $\gcd(2j-1,q-1)=1$. Suppose otherwise.
Then $x^{2j-1} = 1$ has at least two solutions, say $c_1,c_2$, over $\ff{q}$.
Let $\pnt{P}=(0,0,1)$, $\pnt{P}_{i1}=(c_i,c_i,1)$ and
$\pnt{P}_{i2}=(c_i,c_i+c_i \epsilon,1)$ for $i=1,2$.
Firstly, it is easily checked that the six points
$\pnt{Q}=(0,1,0)$, $\pnt{P}$ and $\pnt{P}_{ii'}$, $i,i'=1,2$,
form an O'Nan configuration. Indeed, one checks that
\begin{align*}
\{\pnt{P},\pnt{P}_{11},\pnt{P}_{21}\}&\subseteq [-1,1,0].\\
\{\pnt{P},\pnt{P}_{12},\pnt{P}_{22}\}&\subseteq [-1-\epsilon,1,0],\\
\{\pnt{Q},\pnt{P}_{11},\pnt{P}_{12}\}&\subseteq [1,0,-c_1], \text{ and}\\
\{\pnt{Q},\pnt{P}_{21},\pnt{P}_{22}\}&\subseteq [1,0,-c_2]
.
\end{align*}
Note that none of these lines are of the form $[0,1,z]$.
Since each $c_i \in \ff{q}$ is a solution of $x^{2j-1} = 1$, we have
$\Nm(c_i)^j={c_i}^{(q-1)j}{c_i}^{2j-1}c_i=c_i$.
Thus, the six points are in $\U(j)$ and hence $\U(j)$ is not a unital by
\Cref{thmONan}, a contradiction.
So $\gcd(2j-1,q-1)=1$.

For $U$ to be a unital, then, we have $1<j<q-1$,
$\gcd(j,(q-1)/2)=1=\gcd(2j-1,q-1)$, and $j \ne(q+1)/2$.
Set $j=j'+t(q-1)/2$, with $1<j'<(q-1)/2$ and $t\in\{0,1\}$.
Note that $\gcd(2j'-1,q-1)=1$.
Now, with $E=\ff{q}\setminus\{0,1\}$, if $x\in E$, then
\begin{align*}
h_{2j-1}(x) &= \frac{x^{2j}-1}{x-1}\\
&= \frac{x^{2j'+q-1}-1}{x-1}\\
&= \frac{x^{2j'}-1}{x-1}\\
&= h_{2j'-1}(x).
\end{align*}
Since $\gcd(2j'-1,q-1)=1$, we may apply \Cref{allonespoly} to obtain distinct
elements $c_1',c_2' \in E$ satisfying
$\frac{c_1'^{2j}-1}{c_1'-1}=\frac{c_2'^{2j}-1}{c_2'-1}$.
Let $\pnt{P}'=(1,1,1)$, $\pnt{P}_{i1}'=(c_i',c_i'^{2j},1)$,
$\pnt{P}_{12}'=(c_1',c_1'^{2j}+\epsilon,1)$,
$\pnt{P}_{22}'=(c_2',c_2'^{2j}+t \epsilon,1)$ for $i=1,2$,
where $t=\frac{c_2'-1}{c_1'-1}\in \ff{q}$.
With repeat appeals to \Cref{lem-collinear} we find the six points
$\pnt{Q}$, $\pnt{P}'$ and $\pnt{P}_{i i'}'$, where $i ,i'=1,2$, form an O'Nan
configuration with no line of the form $[0,1,z]$. It is easily seen all
points lie in $\U(j)$, so that we again have a contradiction of \Cref{thmONan}.
This completes our first task, and we now know $\U(1)$ is the only unital among
the sets $\U(j)$.

Now suppose $U$ is projectively equivalent to a unital containing $\V(j)$.
This case can only arise if $q\equiv 3\bmod 4$. We first prove
$\gcd(j(q+1)/2-1,q-1) = 1$. If it is not, then there exists elements
$d_1,d_2\in\ffs{q}\subseteq\square_{q^2}^\star$ that satisfy
$\delta(d_i)=d_i$. Subsequently, the points
$\pnt{R}_{i1}=(d_i,d_i,1)$ and $\pnt{R}_{i2}=(d_i,d_i+d_i \epsilon,1)$, for
$i=1,2$, lie in $\V(j)$.
One verifies these 4 points, along with $\pnt{P, Q}$, form an O'Nan
configuration in exactly the same way as done for the points
$\pnt{P,Q}, \pnt{P}_{i1}, \pnt{P}_{i2}$. This yields a contradiction, proving
$\gcd(j(q+1)/2-1,q-1) = 1$.

Next, we note that for $x\in\ffs{q}$, we have
$$\delta(x) = x^jx^{j(q-1)/2} = x^j \eta(x)^j = x^j \eta(x) = x^{j+(q-1)/2},$$
where $\eta(x)$ is the quadratic character over $\ffs{q}$.
If $1\le j< (q-1)/2$, then we set $k=j+(q-1)/2$, which yields
$\delta(x)=x^k$ with $(q-1)/2 < k < q-1$.
Otherwise $(q-1)/2< j < q-1$, and we instead set $k=j-(q-1)/2$, so that
$\delta(x)=x^{q-1}x^k=x^k$ with $1\le k < (q-1)/2$.
Since $x^n=x^{n\bmod (q-1)}$ for any $x\in\ffs{q}$, we thus also conclude
$j(q+1)/2\equiv k\bmod (q-1)$. In particular, since we know
$\gcd(j(q+1)/2-1,q-1) = 1$ we also obtain $\gcd(k-1,q-1)=1$.
We may therefore appeal to \Cref{allonespoly} to obtain elements
$d_1',d_2' \in E$ satisfying
$\frac{\delta(d_1')-1}{d_1'-1}=\frac{\delta(d_2')-1}{d_2'-1}$.
Let $\pnt{R}_{i1}'=(d_i',\delta(d_i'),1)$,
$\pnt{R}_{12}'=(d_1',\delta(d_1')+\epsilon,1)$,
$\pnt{R}_{22}'=(d_2',\delta(d_2')+t \epsilon,1)$ for $i=1,2$,
where $t=\frac{d_2'-1}{d_1'-1}\in \ff{q}$.
Again we appeal to \Cref{lem-collinear} several times to verify that the six
points $\pnt{Q}$, $\pnt{P}'$ and $\pnt{R}_{i i'}'$, where $i,i'=1,2$, form an
O'Nan configuration with no line of the form $[0,1,z]$. It is easily seen all
points lie in $\V(j)$, so that we again have a contradiction of \Cref{thmONan}.
Thus there is no unital containing $\V(j)$.

\end{proof}

Any orthogonal-BM unital in the Desarguesian plane $\PG(2,q^2)$ contains no O'Nan configurations through the unital point at the line at infinity \cite{BE-1992-obmuo}. We finish the paper by extending this result to the Wantz unitals in the nearfield plane.

\begin{lem}
The Wantz unitals contain no O'Nan configurations through the unital point at the line at infinity.
\end{lem}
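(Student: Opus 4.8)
The plan is to invoke \Cref{thmONan} for the Wantz unitals $\U(b,1)$, $b\in\ffs{q}$, and then contradict the intersection bound in \Cref{replacementlemma3}(iv). First I would check that $\U(b,1)$ meets the hypotheses of \Cref{thmONan}: it is a parabolic unital of order $q$ containing $(0,1,0)$, and since $\phi(1,1,0,t\epsilon)$ acts as $(x,y,1)\mapsto(x,y+t\epsilon,1)$ while $\U(b,1)=\{(0,1,0)\}\cup\bigcup_{a\in\ff q}B(a,2ab)$ by \Cref{lem-UBox}(i)---with each $B(a,2ab)$ cut out by a norm condition on the first coordinate and a trace condition on the second that is unaffected by adding $t\epsilon$---we get $\{\phi(1,1,0,t\epsilon):t\in\ff q\}<\Aut(\U(b,1))$. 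By \Cref{thmONan}, any O'Nan configuration through $(0,1,0)$ must therefore contain a line of the form $[0,1,z]$.

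Next I would fix the incidence structure. An O'Nan configuration is four lines carrying six points, each line on exactly three points and each point on exactly two lines; labelling the points $(0,1,0)$, $\pnt{P}=(x,y,1)$, and $\pnt{P}_{ij}=(x_i,y_{ij},1)$ ($i,j\in\{1,2\}$) as in \Cref{lem-planeONan}, the two lines through $(0,1,0)$ are secants and hence of the form $[1,0,-x_1]$, $[1,0,-x_2]$ (because $[0,0,1]$ is the unique tangent through $(0,1,0)$ and every other line there has this shape), while $\pnt{P}$ is the unique point lying on neither of them and so lies on the remaining two lines, $\ell=\{\pnt{P},\pnt{P}_{11},\pnt{P}_{21}\}$ and $\ell'=\{\pnt{P},\pnt{P}_{12},\pnt{P}_{22}\}$. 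Since $(0,1,0)\notin[0,1,z]$, the guaranteed line $[0,1,z]$ is one of $\ell,\ell'$, say $\ell=[0,1,z]$; then $\pnt{P},\pnt{P}_{11},\pnt{P}_{21}$ all have second coordinate $-z$, and membership in $\U(b,1)$ forces their first coordinates to have common norm $a:=\Tr(-z)/(2b)$, with $a\neq0$ since $\ell$ carries three unital points and is therefore a secant.

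The contradiction then comes from $\ell'$. It passes through $\pnt{P}$ but not through $(0,1,0)$, so it is not of the form $[1,0,t]$; and it cannot be $[0,1,z']$, since such a line through $\pnt{P}=(x,-z,1)$ would have $z'=z$ and coincide with $\ell$. Hence $\ell'=[u,1,z']$ with $u\neq0$. Its three unital points $\pnt{P},\pnt{P}_{12},\pnt{P}_{22}$ are distinct (their first coordinates $x,x_1,x_2$ are distinct, since each pair lies on one of the collinear triples already described), and each has first coordinate of norm $a$ ($\pnt{P}_{12}$ shares a first coordinate with $\pnt{P}_{11}$ on the line $[1,0,-x_1]$, and $\pnt{P}_{22}$ with $\pnt{P}_{21}$ on $[1,0,-x_2]$), so all three lie in $B(a,2ab)$. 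This contradicts \Cref{replacementlemma3}(iv), which says a line $[u,1,z']$ with $u\neq0$ meets $B(a,2ab)$ in at most two points; hence $\U(b,1)$ admits no O'Nan configuration through $(0,1,0)$. I expect the one delicate point to be the combinatorial step---pinning down that the lines through $(0,1,0)$ are the $[1,0,-x_i]$, that the special line $[0,1,z]$ must be $\ell$ rather than $\ell'$, and the bookkeeping placing $\pnt{P},\pnt{P}_{12},\pnt{P}_{22}$ in a common $B(a,2ab)$; everything else is a direct appeal to \Cref{thmONan}, \Cref{lem-UBox}, and \Cref{replacementlemma3}.
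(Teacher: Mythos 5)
Your proposal is correct and follows essentially the same route as the paper: invoke \Cref{thmONan} to force a line $[0,1,z]$ into the configuration, use \Cref{lem-UBox}(i) together with the incidence structure to place the five affine points in a single $B(a,2ab)$, and contradict \Cref{replacementlemma3}(iv) on the remaining line $[u,1,z']$ with $u\neq 0$. You simply spell out in more detail the bookkeeping that the paper compresses into one sentence (and you explicitly verify the hypothesis of \Cref{thmONan}, which the paper takes as read).
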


\begin{proof}
It suffices to show that the statement is true for $\U(1)$ whose unique 
unital point at the line at infinity is $(0,1,0)$.
By \Cref{thmONan}, one of the lines in the configuration must be $[0,1,z]$ for some $z\in\N$.
Applying \Cref{replacementlemma3} (i), (ii), (iii) to the form of $\U(1)$ given
in \Cref{lem-UBox}(i), we deduce that the remaining 5 points of the
configuration are in $B(a,2 a b )$ for a fixed ${a,b\in\ffs{q}}$.
The configuration must contain at least one line of the form
$[u,1,z']$ with $u\ne 0$, and this line must contain 3 points of $\U(1)$.
However, it follows from \Cref{replacementlemma3} (iv) that $[u,1,z']$ can
only contain 2 such points. Contradiction.
\end{proof}

\bibliographystyle{amsplain}

\begin{thebibliography}{10}

\bibitem{A-1955-peuf}
J.~Andr\'e, \emph{{Projective Ebenene \"uber Fastk\"orpern}}, Math. Z.
  \textbf{62} (1955), 137--160.

\bibitem{BE-1992-obmuo}
R.D. Baker and G.L. Ebert, \emph{{On Buekenhout-Metz unitals of odd order}}, J.
  Combin. Des. \textbf{4} (1996), 143--152.

\bibitem{BEW-2010-eoobu}
R.D. Baker, G.L. Ebert, and K.L. Wantz, \emph{{Enumeration of orthogonal
  Buekenhout unitals}}, Des. Codes Cryptogr. \textbf{55} (2010), 261--283.

\bibitem{bBE-2008-uipp}
S.~Barwick and G.~Ebert, \emph{{Unitals in projective planes}}, Springer
  Monographs in Mathematics, Springer, New York, 2008.

\bibitem{B-1976-eouif}
F.~Buekenhout, \emph{{Existence of unitals in finite translation planes of
  order $q^2$ with a kernel of order $q$}}, Geom. Dedicata \textbf{5} (1976),
  189--194.

\bibitem{bD-1968-fg}
P.~Dembowski, \emph{{Finite Geometries}}, Springer-Verlag, New York,
  Heidelberg, Berlin, 1968, reprinted 1997.

\bibitem{D-1905-doaga}
L.E. Dickson, \emph{Definitions of a group and a field by independent
  postulates}, Trans. Amer. Math. Soc. \textbf{6} (1905), 198--204.

\bibitem{D-1905-ofa}
\bysame, \emph{On finite algebras}, Nachr. Kgl. Ges. Wiss. G{\" o}ttingen,
  Math.-phy. Klasse (1905), 358--393.

\bibitem{E-1992-obmuo}
G.L. Ebert, \emph{{On Buekenhout-Metz unitals of even order}}, Eur. J. Comb.
  \textbf{13} (1992), 109--117.

\bibitem{GH-2017-fnca}
T.~Grundh{\" o}fer and C.~Hering, \emph{{Finite nearfields classified, again}},
  J. Group Theory \textbf{20} (2017), 829--839.

\bibitem{bHP-1973-pp}
D.R. Hughes and F.C. Piper, \emph{{Projective Planes}}, Graduate Texts in
  Mathematics, vol.~6, Springer-Verlag, New York, Heidelberg, Berlin, 1973.

\bibitem{HW-2014-oeaub}
A.M.W. Hui and P.P.W. Wong, \emph{{On embedding a unitary block design as a
  polar unital and an intrinsic characterization of the classical unital}}, J.
  Comb. Theory Ser. A \textbf{122} (2014), 39--52.

\bibitem{M-1994-ppotp}
R.~Matthews, \emph{Permutation properties of the polynomials $1+x+\cdots+x^k$
  over a finite field}, Proc. Amer. Math. Soc. \textbf{120} (1994), 47--51.

\bibitem{O-1972-aoubd}
M.E. O'Nan, \emph{{Automorphisms of unitary block designs}}, J. Algebra
  \textbf{20} (1972), 495--511.

\bibitem{P-1979-ubd}
F.~Piper, \emph{Unitary block designs}, Graph theory and combinatorics ({P}roc.
  {C}onf., {O}pen {U}niv., {M}ilton {K}eynes, 1978) (R.J. Wilson, ed.), Res.
  Notes in Math., vol.~34, Pitman, Boston, Mass.-London, 1979, pp.~98--105.

\bibitem{S-2024-uwoca}
M.J. Stroppel, \emph{{Unitals without O'Nan configurations are classical if
  they admit all translations}}, Note Mat. \textbf{2} (2024), 107--111.

\bibitem{T-1974-ubd}
D.E. Taylor, \emph{{Unitary block designs}}, J. Comb. Theory Ser. A \textbf{1}
  (1974), 51--56.

\bibitem{W-1993-aalla}
D.~Wan, \emph{{A $p$-adic lifting lemma and its applications to permutation
  polynomials}}, Finite Fields, Coding Theory, and Advances in Communications
  and Computing (New York), Lecture Notes in Pure and Applied Mathematics, vol.
  141, Marcel Dekker, 1993, pp.~209--216.

\bibitem{W-2008-uitrn}
K.L. Wantz, \emph{Unitals in the regular nearfield planes}, J. Geom.
  \textbf{88} (2008), 169--177.

\bibitem{Z-1935-uef}
H.~Zassenhaus, \emph{{Uber endlicke Fastoper}}, Abh. Math. Sem. Univ. Hamburg
  \textbf{11} (1935), 187--220.

\end{thebibliography}

\providecommand{\bysame}{\leavevmode\hbox to3em{\hrulefill}\thinspace}
\providecommand{\MR}{\relax\ifhmode\unskip\space\fi MR }
\providecommand{\MRhref}[2]{%
  \href{http://www.ams.org/mathscinet-getitem?mr=#1}{#2}
}
\providecommand{\href}[2]{#2}

\end{document}